\def\ep{\varepsilon}\def\tilde{\widetilde}
\def\T{\mathbf T} \def\RR{\mathbbm R} \def\S{\mathcal S}
\def\sign{\mathrm{sign}} \def\uno{\mathbb 1}
\def\b#1{\mathbf{#1}}
\definecolor{trueblue2}{RGB}{0, 115, 230}
\newtheorem{theorem}{Theorem}
\newtheorem{lemma}{Lemma}
\newtheorem{corol}{Corollary}
\newtheorem{proposition}{Proposition}
\newtheorem{remark}{Remark}
\newtheorem{definition}{Definition}
\title{Shifted and extrapolated power methods for tensor $\ell^p$-eigenpairs}
\author{Stefano Cipolla\footnotemark[2] \and Michela Redivo-Zaglia\footnotemark[2]
	\and Francesco Tudisco\footnotemark[3] }
\begin{document}

\maketitle

\renewcommand{\thefootnote}{\fnsymbol{footnote}}

\footnotetext[2]{Department of Mathematics ``Tullio Levi-Civita'', University of Padua, 35121 Padua, Italy, \texttt{cipolla@math.unipd.it, michela.redivozaglia@unipd.it}}
\footnotetext[3]{GSSI $-$ Gran Sasso Science Institute, 67100 L'Aquila, Italy, \texttt{francesco.tudisco@gssi.it}}

\begin{abstract}
This work is concerned with the computation of $\ell^p$-eigenvalues and eigenvectors of square tensors with $d$ modes.
In the first part we propose two possible shifted variants of the popular (higher-order) power method and{, when the tensor is entrywise nonnegative {with a possibly reducible pattern} and $p$ is strictly larger than the number of modes, we prove} the convergence of both the schemes to the Perron $\ell^p$-eigenvector {and to} the maximal corresponding $\ell^p$-eigenvalue {of the tensor}. Then, {in the second part,} motivated by the slow rate of convergence that the proposed methods achieve for certain real-world tensors when $p\approx d$, the number of modes, we introduce an extrapolation framework  based on the simplified topological $\ep$-algorithm to efficiently accelerate the shifted power sequences.
Numerical results on synthetic and real world problems show the improvements gained by the introduction of the shifting parameter and the efficiency of the acceleration technique.

\vspace{.5em}

\noindent{\bf Keywords}
$\ell^p$-eigenvalues, tensors, shifted higher-order power method, extrapolation methods, Shanks' transformations, $\ep$-algorithms

\vspace{.5em}

\noindent{\bf AMS}
15A69, 15A18, 65B05, 65B10, 65B99, 65F15
\end{abstract}		
\section{Introduction}
A  tensor, or hypermatrix, is a multi-dimensional array: a set of numbers $t_{i_1,\dots,i_d}$  indexed along $d$ modes. When $d=1$ the tensor is a vector, whereas for $d=2$ it reduces to a matrix.

Tensor eigenvalue problems have gained  considerable attention in recent years and a number of contributions have addressed relevant issues both from the theoretical and the numerical points of view.
The multi-dimensional nature of tensors naturally gives rise to a variety of eigenvalue problems. In fact, the classic eigenvalue and singular value problems for a matrix can be generalized to the tensor setting following different constructions which lead to different notions of eigenvalues and singular values for tensors, all of them reducing to the standard matrix case when the tensor is assumed to have  2 modes.
The best known methods for addressing eigenvalues and eigenvectors of real tensors are based on extensions of the power method. To our knowledge the first occurrence of a power method for tensors, often called \textit{higher-order} power method, is given in \cite{kofidis2002best,regalia2003monotonic}, whereas shifted versions of this method, such as the SS-HOPM and LZI, were considered e.g.\ in \cite{kolda2011shifted,liu2010always}.
When the considered tensor is real with nonnegative entries, a natural question arises if and to what extent the Perron-Frobenius theorem for matrices can be transferred to the multi-dimensional setting. The answer turns out to be non-trivial and many authors have worked in this direction in recent years, see for instance \cite{chang2008perron,friedland2013perron,gautier2016tensor,liu2010always,ng2009finding}. In particular, a summary of these results and a unifying Perron--Frobenius theorem for tensors is presented in \cite{gautier2017tensor}. 

In this work we focus on the $\ell^p$-eigenvector problem for square real tensors, being $p$ any real number larger than 1. In Section \ref{sec:tensor_eigenvalue} we review the definition and relevant properties of such eigenvalue problem. Then, in Section \ref{sec:PM}, we introduce  two new shifted power methods for this tensor eigenproblem, that include as special cases the SS-HOPM and the LZI algorithms mentioned above. In  the case of real nonnegative tensors {with possibly reducible patterns}, we prove that the proposed shifted power sequences converge to the unique Perron $\ell^p$-eigenvector and to the corresponding positive $\ell^p$-eigenvalue, showing that in this setting the methods inherit the desirable convergence guarantees of their matrix counterpart. In Section \ref{sec:experiments_part1} we study experimentally the numerical behavior of the newly introduced schemes. In Section \ref{sec:extrapolation} we discuss how the sequences produced by the new algorithms can be accelerated by means of extrapolation methods that require only few additional computations. Finally, in Section \ref{sec:experiments_extrapolated}, we show via numerical experiments that the extrapolated sequences converge significantly faster than the original power sequences for a number of different test problems borrowed from  real world applications.

%
%
%
%
%
%
%
%
%
%
%
%
\section{Tensor $\ell^p$-eigenvalues and eigenvectors}\label{sec:tensor_eigenvalue}
In this section we review the notion of $\ell^p$-eigenvalues for a square real tensor, and discuss some of their properties.

%
%
Let $\T=(t_{i_1, \dots,i_d})\in \RR^{n\times \cdots\times n}$ be a real tensor with $d$ modes, each of dimension $n$. Since each mode of the tensor has the same dimension, we say that the tensor is square and we  briefly write $\T\in\RR^{[d,n]}$. Furthermore, we say that $\T$ is symmetric if the entries $t_{i_1, \dots, i_d}$ are invariant with respect to any permutation of the indices $i_1, \dots, i_d$, as introduced in \cite{comon2008symmetric}. To any square tensor $\T \in \RR^{[d,n]}$ and any vector $\b x$ of size $n$, we can associate   a new vector $\b y$ by ``multiplying'' $\T$ times $\b x$. This operation extends the matrix vector product and is defined via a polynomial map which we denote with the same capital letter denoting the corresponding tensor, but in italic normal font. More precisely
\begin{definition}
 Given a tensor $\T=(t_{i_1,\dots,i_d})\in\RR^{n\times\cdots \times n}$,   define the map
\begin{equation}\label{eq:tensor_x_vector}
T:\RR^n\to\RR^n, \qquad \b x\mapsto T(\b x)_{i_1} = \sum_{i_2,\dots,i_d}t_{i_1,i_2,\dots,i_d}x_{i_2}\cdots x_{i_d}
\end{equation}
for $i_1=1,\dots, n$.
\end{definition}

For the sake of completeness, let us point out that the vector $T(\b x)$ is sometimes denoted by $\T \b x^{d-1}$ (see e.g.\ \cite{kolda2011shifted,regalia2003monotonic}). This is because, when $d=2$,  that is $\T$ is a $n\times n$ matrix, then $T(\b x)$ coincides with the matrix vector product $\T \b x$. However, in this work we prefer to distinguish $\T$ and $T$ explicitly. This choice is made for the sake of generality, having in mind a future extension of the analysis here presented to tensor singular values and the case of rectangular tensors.
\begin{remark}\label{rem:tensor_x_vector}
Clearly, other multiplicative maps can be associated to a square tensor $\T\in \RR^{[d,n]}$. In fact, for any $1\leq k\leq d$ one can define a map $T_k:\RR^n\to\RR^n$ by replacing the summation on the right-hand side of \eqref{eq:tensor_x_vector} with the sum over all the indices $i_1, \dots, i_d$ except for $i_k$. This defines $d$ different operations $T_1, \dots, T_d$ where $T_1=T$ but $T_k\neq T$ in general, if $k\neq 1$. Precisely, one easily realizes that {all} the mappings $T_k$ coincide if and only if the tensor $\T$ is symmetric. Note, in particular, that for the matrix case $d=2$, the two maps $T_1$ and $T_2$ coincide with the matrix vector products $\T\b x$ and $\T^T\b x$, respectively.
\end{remark}

 As $T$ maps $\RR^n$ into itself, we can associate a concept of eigenvalues and eigenvectors to $\T$ via the mapping $T$.
\begin{definition}
Let $1<p<+\infty$. A real number $\lambda\in \RR$ is said to be a $\ell^p$-eigenvalue of $\T$, corresponding to the $\ell^p$-eigenvector $\b x\in\RR^n$, if the following  holds
\begin{equation}\label{eq:ellp-eigenvalue}
 T(\b x) = \lambda \Phi_p(\b x)\, , \quad \|\b x \|_p=1
\end{equation}
where $\|\b x\|_p$ denotes the usual $\ell^p$ norm $\|\b x\|_p=(|x_1|^p+\dots + |x_n|^p)^{1/p}$ and  $\Phi_p:\RR^n\to \RR^n$ is the map entrywise defined by $\Phi_p(\b x)_i = |x_i|^{p-2}x_i=\sign(x_i)|x_i|^{p-1}$ for $i=1,\dots, n$.
\end{definition}

The notion of $\ell^p$ eigenvalues was probably introduced by Lim in \cite{lim2005singular}. Note that, unlike the matrix case, if $p\neq d$ then the system of equations $ T(\b x) = \lambda \Phi_p(\b x)$ is not homogeneous and thus, in general, $\ell^p$-eigenvectors are not defined up to scalar multiples. This is the reason why we require the normalization $\|\b x \|_p=1$. 

The map $\Phi_p$ is also known as the duality map of $\ell^p$ with gouge function $\mu(\alpha)=\alpha^{p-1}$ (see, e.g., \cite{lang2011eigenvalues}) and enjoys several useful properties. We recall two of them below:
\begin{itemize}
\item If $q$ is the H\"older conjugate of $p$, i.e.\ $p^{-1}+q^{-1}=1$, then $\Phi_q=\Phi_p^{-1}$ is the inverse of $\Phi_p$, that is we have  $\Phi_p(\Phi_q(\b x))=\Phi_q(\Phi_p(\b x))=\b x$ for any $\b x$.
\item When $p=2$ we have $\Phi_p(\b x)=\Phi_q(\b x)=\b x$, i.e.\  $\Phi_p$ is the identity map.
\end{itemize}

We say that a solution $(\lambda, \b x)\in\RR\times \RR^n$ of Equation \eqref{eq:ellp-eigenvalue} is an $\ell^p$-eigenpair of $\T$. A special name is usually reserved to the cases $p=2$ and $p=d$, the former being known as $Z$-eigenpairs and the latter as $H$-eigenpairs. Actually, when $d$ is odd, the usual definition of $H$-eigenvectors given in the literature slightly differs from \eqref{eq:ellp-eigenvalue}. In fact, a $H$-eigenpair is usually defined as the solution of the system $T(\b x) = \lambda \b x^{d-1}$  \cite{qi2017tensor}, which coincides with \eqref{eq:ellp-eigenvalue} only if $d$ is even or if $\b x$ is entrywise nonnegative. {In this work we will call $H$-eigenvector any vector $\b x$ satisfying \eqref{eq:ellp-eigenvalue} with $p=d$.} Of course one could define $\ell^p$-eigenvalues by replacing the duality map $\Phi_p$ with a standard entrywise power of the vector. However, as we notice below,  using $\Phi_p$ has the advantage that, just like symmetric matrices, $\ell^p$-eigenvectors of symmetric tensors $\T$ defined as in \eqref{eq:ellp-eigenvalue} have a  variational characterization as critical points of the Rayleigh quotient
\begin{equation}\label{eq:rayleigh}
\b x\mapsto f_p(\b x) = \frac{\b x^T T(\b x)}{\|\b x\|_p^d}, \qquad  \b x^T T(\b x)=\sum_{i_1,\dots,i_d}t_{i_1,i_2,\dots,i_d}x_{i_1}\cdots x_{i_d}\, .
\end{equation}
%
%
%
%
\begin{proposition}\label{prop:variational}
Let $\T$ be a symmetric square tensor and let $p>1$. Then $\b x\in\RR^n$ is an $\ell^p$-eigenvector of $\T$  if and only if $\b x$ is a critical point of $f_p$ and the associated $\ell^p$-eigenvalue coincides with $\b x^TT(\b x)/\|\b x\|_p^p$. 
\end{proposition}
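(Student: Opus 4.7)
The plan is to compute $\nabla f_p(\b x)$ explicitly for $\b x\neq 0$ and recognize the resulting critical-point equation as \eqref{eq:ellp-eigenvalue}. Write $f_p = g/h$ with $g(\b x) = \b x^T T(\b x)$ and $h(\b x) = \|\b x\|_p^d$. Since $g$ is the homogeneous degree-$d$ polynomial associated with the symmetric tensor $\T$, symmetry of the coefficients $t_{i_1,\dots,i_d}$ implies that for each fixed index $j$, differentiating the sum in \eqref{eq:rayleigh} with respect to $x_j$ produces $d$ equal contributions, one for each slot in which $x_j$ can appear. This gives the Euler-type identity $\nabla g(\b x) = d\, T(\b x)$, which is the key use of symmetry.

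Next, using $\partial_j |x_j|^p = p\, \Phi_p(\b x)_j$ (valid for $p>1$, where the map $\alpha\mapsto|\alpha|^p$ is $C^1$ with derivative $p\,\sign(\alpha)|\alpha|^{p-1}$), and applying the chain rule to $h = (\|\b x\|_p^p)^{d/p}$, I would obtain $\nabla h(\b x) = d\, \|\b x\|_p^{d-p}\, \Phi_p(\b x)$. Combining these two gradients via the quotient rule then yields
\begin{equation*}
\nabla f_p(\b x) \;=\; \frac{d}{\|\b x\|_p^d}\left(T(\b x) \;-\; \frac{\b x^T T(\b x)}{\|\b x\|_p^p}\, \Phi_p(\b x)\right)\, .
\end{equation*}

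The theorem then follows by inspection. If $\b x$ is a critical point of $f_p$ (with $\b x\neq 0$, which is implicit since $f_p$ is undefined at the origin), the bracketed quantity vanishes, so $T(\b x) = \lambda\,\Phi_p(\b x)$ with $\lambda = \b x^T T(\b x)/\|\b x\|_p^p$; rescaling $\b x$ to have unit $\ell^p$ norm (permissible because $f_p$ is $0$-homogeneous, so critical points form rays) gives the eigenpair in the sense of \eqref{eq:ellp-eigenvalue}. Conversely, if $(\lambda,\b x)$ satisfies \eqref{eq:ellp-eigenvalue}, taking the inner product with $\b x$ and using $\b x^T\Phi_p(\b x) = \|\b x\|_p^p = 1$ identifies $\lambda = \b x^T T(\b x)/\|\b x\|_p^p$, and substituting back shows the bracket vanishes, hence $\nabla f_p(\b x) = 0$.

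The main obstacle is essentially bookkeeping: one must justify the factor $d$ in $\nabla g$, which is where symmetry of $\T$ enters in an essential way (without symmetry, each partial $\partial_j g$ produces $d$ \emph{different} ``slice'' sums corresponding to the maps $T_k$ of Remark~\ref{rem:tensor_x_vector}, and the clean form $\nabla g = d\,T(\b x)$ breaks down). The differentiability of $\alpha\mapsto |\alpha|^p$ for $p>1$ also needs to be noted so that the computation is valid on all of $\RR^n\setminus\{0\}$, including points with vanishing coordinates.
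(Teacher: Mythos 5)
Your proposal is correct and follows essentially the same route as the paper: symmetry yields $\nabla\{\b x^T T(\b x)\} = d\,T(\b x)$, the gradient of the $p$-norm term produces $\Phi_p(\b x)$, and the quotient rule gives exactly the gradient formula from which the equivalence is read off. The extra remarks on differentiability of $|\alpha|^p$ and on normalization are fine but do not change the argument.
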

\begin{proof}
As both $p$ and $d$ are larger than one, the Rayleigh quotient $f_p$ is differentiable and thus $\b x$ is a critical point of $f_p$ if an only if $\nabla f_p(\b x) = \boldsymbol{0}$, where $\nabla$ denotes the gradient. 
By the symmetry of $\b T$, the gradient of $\b x^T T(\b x)$ is given by
$$
\nabla \{\b x^TT(\b x)\} = T_1(\b x) + T_2(\b x)+\cdots + T_d(\b x) = d \, T(\b x)
$$
where $T_k:\RR^n\to\RR^n$ are the maps considered in Remark \ref{rem:tensor_x_vector}. Similarly we compute the gradient of the $p$-norm: $
\nabla \|\b x\|_p = \Phi_p(\b x)/\|\b x\|_p^{p-1}\, .
$

Therefore, by the chain rule, we get
$$
\nabla f_p(\b x) = \frac{\|\b x \|_p^d\nabla\{\b x^T T(\b x)\} - \b x^T T(\b x)\nabla \|\b x\|_p^d  }{\|\b x\|_p^{2d}} = \frac{d}{\|\b x\|_p^d}\left\{ T(\b x) - \left( \frac{\b x^T T(\b x)}{\|\b x\|_p^p}\right)\Phi_p(\b x)  \right\}
$$
which concludes the proof.
\end{proof}

{\begin{remark} \label{eq:constrained_remark}
		Note that with $\b y = \b x / \|\b x\|_p$, we get $f_p(\b x) = \b y^T T(\b y)$. Hence, critical points of $f_p$ coincide with critical points of $\b x^T T(\b x)$, constrained by $\|\b x\|_p=1$. With this constrained optimization formulation one can obtain an equally simple proof of Proposition \ref{prop:variational} as a direct consequence of the Karush-Kuhn-Tucker conditions. This is the approach used for example in \cite{lim2005singular} and \cite[Thm.\ 3.2]{kolda2011shifted}.
	\end{remark}}

The notion of $\ell^p$-eigenvectors and eigenvalues for nonnegative tensors arises in many contexts and applications. The cases $p=2$ and $p=d$ are the natural generalization of eigenvalues and eigenvectors for matrices, as in the case $d=2$  these two notions coincide. The case $p=d$ has been used for example to characterize the positive definiteness of homogeneous polynomial forms in \cite{qi2003multivariate} or to compute the importance of nodes in a network \cite{benson2019three}. The case $p=2$ is strictly related with the best rank-1 approximation of a tensor with respect to the Frobenius norm  \cite{de2000best,friedland2013best,kolda2009tensor}. It arises in certain constraint satisfaction problems \cite{de2005tensor}, as well as problems involving  higher-order statistics \cite{swami1997bibliography},  signal processing \cite{comon2002tensor, nikias1993signal}, quantum geometry \cite{hu2016computing} and data analysis \cite{cipolla2019extrapolation,gleich2015multilinear, nguyen2016efficient,arrigo2017centrality}.

If $\b T$  is symmetric, then it has a finite set of $Z$-eigenvalues, one of which must be real if $d$ is odd \cite{cartwright2013number}. However, computing a prescribed $Z$-eigenpair is in general NP-hard \cite{hillar2013most}.
When $\b T$ is entrywise nonnegative, instead, conditions can be given on the nonzero pattern of $\b T$ to ensure that tensor versions of the power method converge globally to the largest $Z$-eigenvalue \cite{fasino2019higher,gautier2018contractivity,li2014limiting}.
These types of results belong to the developing Perron--Frobenius theory for nonnegative tensors, which naturally involves $\ell^p$-eigenvalues and eigenvectors, with $p$ not necessarily equals to $2$ nor to $d$ \cite{chang2008perron, gautier2016tensor,gautier2017theorem}. We review the relevant results below.

We say that $\T=(t_{i_1,\dots,i_d})$ is nonnegative when $t_{i_1,\dots,i_d}\geq 0$ for all $i_j=1, \dots, n$ and $j=1,\dots, d$. We  briefly write $\T \geq 0$ and, likewise, $\T >0$ when all the entries of $\T$ are strictly positive. Several notions of spectral radius can be then employed to generalize this concept from the matrix to the higher-order case (see e.g.\ \cite{gautier2017theorem}). Here we adopt the following
\begin{equation}\label{eq:spectral_radius}
 r_p(\T) = \sup\{|\lambda| : \lambda \text{ is an }\ell^p\text{-eigenvalue of }\T \}\,
\end{equation}
{Note that, when $\T$ is symmetric,  using Proposition \ref{prop:variational} and Remark \ref{eq:constrained_remark}, we have
	\begin{equation}
	{r}_p(\b T):= \max_{\|x\|_p=1} | \b x^T T(\b x) |.
	\end{equation}}

Unlike the matrix case, the Perron-Frobenius theorem for tensors relies on the choice of $p$ and the number of modes $d$. To the best of our knowledge, if the tensor is not entrywise positive or stochastic (settings that are  studied for example in \cite{gautier2018contractivity} and \cite{fasino2019higher}, respectively) $p\geq d$ is the wider range of values $p$ for which existence, uniqueness and maximality of a positive $\ell^p$-eigenpair for $\T\geq0$ has been proved (c.f.\ \cite{gautier2017theorem} for instance).  Within this range, a distinction must be made: while the case $p=d$  requires assumptions on the irreducibility of $\T$ (e.g. strong and weak irreducibility or primitivity, { see Definition \ref{def:wi} and} \cite{chang2010singular,friedland2013perron}), it was observed in \cite{gautier2017theorem} that  $p>d$ is associated with a Lipschitz contractive map and the Perron-Frobenius theorem holds without any special requirement on the non-zero pattern of $\T$.   In this work we shall mostly focus on this second case, thus we recall here below the corresponding Perron-Frobenius theorem. We refer to \cite{gautier2017theorem} for more details and for a thorough bibliography review on the subject.

\begin{theorem}[\cite{gautier2017theorem}]\label{thm:PF}
 Let $\T\in\RR^{[d,n]}$  be such that $\T\geq 0$ and such that $T(\uno)$ is entrywise positive, where $\uno$ is the vector of all ones. If $p>d$, then $r_p(\T)>0$ and there exists a unique  entrywise positive $\b u \in \RR^n$  such that $\|\b u \|_p=1$ and $T(\b u) = r_p(\T)\Phi_p(\b u)$.
\end{theorem}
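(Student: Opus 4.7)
The plan is to reduce the eigenproblem to a fixed-point equation for a map that is contractive in a suitable cone metric. Let $q = p/(p-1)$ be the H\"older conjugate of $p$, so that $\Phi_q = \Phi_p^{-1}$. For $\b u > 0$ and $\lambda > 0$, equation \eqref{eq:ellp-eigenvalue} is then equivalent to
$$
F(\b u) := \Phi_q(T(\b u)) = \lambda^{1/(p-1)}\, \b u\, .
$$
Because $T$ is homogeneous of degree $d-1$ and $\Phi_q$ is homogeneous of degree $1/(p-1)$, the map $F$ is homogeneous of degree $\alpha = (d-1)/(p-1)$. The hypothesis $p>d$ is precisely what gives $\alpha \in (0,1)$, and this strict subhomogeneity will drive the whole argument.

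Next I would work on the open positive cone $\{\b x \in \RR^n : \b x>0\}$ equipped with the Thompson part metric $d_T(\b x,\b y) = \log\max_i\max\{x_i/y_i,\, y_i/x_i\}$. The assumption $\T\geq 0$ together with $T(\uno)>0$ forces $F$ to send the open cone into itself (every component of $F(\b x)$ is bounded below by a positive multiple of $\min_j x_j^{d-1}$). Since $F$ is also order-preserving on the cone (the composition of the entrywise nonnegative polynomial $T$ with the entrywise monotone $\Phi_q$), a standard Krause--Nussbaum-type estimate for $\alpha$-homogeneous monotone maps gives the contraction
$$
d_T(F(\b x), F(\b y)) \leq \alpha\, d_T(\b x, \b y)\, .
$$
Composing $F$ with the $\ell^p$-normalization $\b x \mapsto \b x/\|\b x\|_p$ produces a $d_T$-contraction of the positive part of the $\ell^p$-unit sphere into itself, a set that is complete for $d_T$. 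Banach's fixed-point theorem then yields a unique $\b u$ in the open cone with $\|\b u\|_p=1$ and $F(\b u) = c\, \b u$ for some $c>0$; setting $\lambda = c^{p-1}$ gives the required positive $\ell^p$-eigenpair $(\lambda,\b u)$, and in particular $\lambda>0$.

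The hardest step will be the maximality $\lambda = r_p(\T)$. My plan is a Collatz--Wielandt comparison: for any $\ell^p$-eigenpair $(\mu,\b v)$ with $\|\b v\|_p=1$, the nonnegativity of $\T$ gives the entrywise estimate $T(|\b v|) \geq |\mu|\,\Phi_p(|\b v|)$, and applying the monotone $\Phi_q$ yields $F(|\b v|) \geq |\mu|^{1/(p-1)}\,|\b v|$. Coupling this sub-eigenvector inequality with the identity $F(\b u)=\lambda^{1/(p-1)}\,\b u$ and iterating $F$ in the Thompson metric forces $|\mu|\leq \lambda$, whence $\lambda = r_p(\T)$. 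The main obstacle I foresee is that $|\b v|$ may lie on the boundary of the cone, where $d_T$ is not defined; I would circumvent this by the standard perturbation trick, replacing $|\b v|$ with $|\b v|+\ep\uno$ and letting $\ep\to 0^+$, using the continuity of $F$ and of the estimates above.
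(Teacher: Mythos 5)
Note first that the paper does not prove this statement at all: Theorem \ref{thm:PF} is imported verbatim from \cite{gautier2017theorem}, and the closest in-paper argument is the proof of the generalization, Theorem \ref{thm:convergence}. Your proposal is correct in substance and follows the same cone-contraction philosophy, but it differs in two instructive ways. First, you exploit the genuine Lipschitz contraction $d(F(\b x),F(\b y))\leq \alpha\, d(\b x,\b y)$ with $\alpha=(d-1)/(p-1)<1$, available because $T(\uno)>0$ keeps everything in the open cone; the paper's Theorem \ref{thm:convergence} only establishes strict (non-uniform) contractivity on $C_+(\T)$ and invokes Edelstein's fixed-point theorem, and it explicitly remarks after the proof that the Lipschitz version is what \cite{gautier2017theorem} obtains in the $T(\uno)>0$ setting — so your route recovers the stronger statement (linear convergence with explicit rate). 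One bookkeeping caveat: the Thompson metric is not projective, so the \emph{normalized} map is not automatically a $d_T$-contraction with ratio $\alpha$ on the unit sphere; you should either run Banach directly on the unnormalized $F$ on the complete metric space $(\RR^n_{++},d_T)$ and rescale afterwards (using $\alpha\neq 1$ to adjust the eigenvalue), or switch to the scale-invariant Hilbert metric \eqref{eq:hilbert-distance}, for which the same exponent-$\alpha$ estimate holds and which is what the paper uses. Second, and more significantly, your Collatz--Wielandt comparison for the maximality $\lambda=r_p(\T)$ — passing from $T(|\b v|)\geq|\mu|\Phi_p(|\b v|)$ to a sub-eigenvector $\b w$ with $F(\b w)\geq\b w$, dominating it by an interior point, and iterating — is genuinely different from, and stronger than, what the paper proves: its Corollary obtains $\lambda=r_p(\T)$ only for \emph{symmetric} $\T$ via the Rayleigh-quotient characterization, whereas Theorem \ref{thm:PF} asserts maximality without symmetry, so some argument like yours is actually required to recover the full claim. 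Your boundary perturbation $|\b v|+\ep\uno$ is most cleanly deployed as a one-sided comparison ($F^k(|\b v|+\ep\uno)\geq F^k(\b w)\geq\b w$ by monotonicity, with the left side converging to the fixed point), after which taking $\ell^p$-norms yields $|\mu|\leq\lambda$.
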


\section{Shifted power method for $\ell^p$-eigenvalues}\label{sec:PM}
The  power method is arguably the best known method to address the computation of the maximal $\ell^p$-eigenpair of $\T$. Shifted variants of that method have been proposed for $Z$-eigenpairs and for $H$-eigenpairs 
in \cite{kolda2009tensor} and \cite{liu2010always}, respectively. Here we propose two novel possible extensions of those techniques to the case of a general $\ell^p$-eigenvalue problem {for nonnegative tensors}. The pseudocode  of these new methods is shown in Algorithms \ref{alg:shiftedPM1} and \ref{alg:shiftedPM2}.
{Note that the symbol $\circ$ at line \ref{line:shiftedPM1} of Algorithm \ref{alg:shiftedPM1} and at line \ref{line:shiftedPM2} of Algorithm \ref{alg:shiftedPM2} denotes the entrywise product and is required in order to ensure that, when $\T\geq 0$, both $T(\b x_k)$ and the shifting vectors $\b x_k$ and $\Phi_p(\b x_k)$  have the same zero pattern. In particular, when $\b T$ is irreducible, $\b x_0>0$ implies $T(\b x_k)>0$ for all $k$, thus $\sign(\b z) = \uno$} and we retrieve the shifted methods for both $Z$ and $H$ eigenpairs as special cases of the new schemes.

The proposed shifted methods share several interesting convergence properties with the original  unshifted power method. In particular, for nonnegative tensors and under the assumption $p>d$, we are guaranteed that if the initial $\b x_0$ is chosen {entrywise positive,} 
then {for $k$ large enough} the whole sequence of eigenvector approximations $\b x_k$ will stay {within a certain cone $C_+(\T)$} 
and it will converge to the unique eigenvector of the tensor $\T$ in that cone; at the same time, the sequence of eigenvalue approximations $\lambda_k$ will converge to the corresponding eigenvalue and, if $\T$ is symmetric, we are guaranteed that such eigenvalue corresponds to the $\ell^p$-spectral radius of $\T$. We state this convergence result in our main Theorem \ref{thm:convergence} below. 

\SetKwBlock{Repeat}{For $k=0,1,2,3,\dots$ repeat}{ }
\begin{algorithm}[hbt]
\caption{{Shifted Power Method 1}}\label{alg:shiftedPM1}
 \DontPrintSemicolon
\KwIn{$\b x_0>0$, $\sigma \geq  0$, $p>1$, tolerance $\varepsilon >0$}
$q := p/(p-1)$ (\textit{Conjugate exponent}) \;
\Repeat{
{$\b z = T(\b x_k)$}\;
$\b y = \Phi_q(\b z)+\sigma \, {\text{sign}(\b z)\circ} \b x_k$\;\label{line:shiftedPM1}
$\b x_{k+1} = \b y/\|\b y\|_p$\;
$\lambda_{k+1} = f_p(\b x_{k+1})$
}{\textbf{until} $\|\b x_{k+1}-\b x_k\|_p<\varepsilon$}
\end{algorithm}

 \begin{algorithm}[t]
\caption{{Shifted Power Method 2}}\label{alg:shiftedPM2}
 \DontPrintSemicolon
\KwIn{$\b x_0>0$,  $\sigma \geq  0$, $p >1$, tolerance $\varepsilon >0$}
$q := p/(p-1)$ (\textit{Conjugate exponent}) \;
\Repeat{
{$\b z = T(\b x_k)$}\;
$\b y = \Phi_q(\b z+\sigma \, {\text{sign}(\b z)\circ} \Phi_p(\b x_k))$\; \label{line:shiftedPM2}
$\b x_{k+1} = \b y/\|\b y\|_p$\;
$\lambda_{k+1} = f_p(\b x_{k+1})$
}{\textbf{until} $\|\b x_{k+1}-\b x_k\|_p<\varepsilon$}
\end{algorithm}

\subsection{Convergence analysis}
Let $\RR^n_+$ denote  the nonnegative orthant in $\RR^n$, i.e., the cone of vectors with nonnegative entries, and let $\RR^n_{++}$ be its interior, i.e., the open cone of vectors with strictly positive entries.
Consider the cone of nonnegative vectors whose zero pattern is preserved by $\T$: 

\begin{definition}\label{def:cone}
For a nonzero tensor $\T\geq 0$  define $C_+(\T)$ as the cone
$$C_+(\T) =\bigcap_{k\geq k_0}C_+^{k} \quad \text{with} \quad  C_+^k =  \Big\{\b x\geq 0 : c_1 \b x \leq T^k(\uno) \leq c_2\b x, \text{ for some }c_1,c_2>0\Big\},$$
where  $T^k$ denote $k$ compositions of the map $T$ and $k_0=k_0(\T)$ is the smallest integer $k_0\geq 1$ such that $C_+(\T)$ is not empty.
\end{definition}

Note that, for each $k$, the set $C_+^k$ is the cone of nonnegative vectors having the same zero pattern as $T^k(\uno)$. Since $T^{k+1}(\uno)=T(T^k(\uno))$ and $\uno\geq T(\uno)$ entrywise, one easily realizes that $T^k(\uno)_i=0$ implies $T^{k+1}(\uno)_i=0$. Thus, there exists a finite integer $k_0$ such that $\cap_{k\geq k_0}C_+^k$ is nonempty, i.e.\ $C_+(\T)$ is well defined. While $C_+(\T)$ might contain only the zero vector when $\T$ is too sparse, that situation rarely occurs in practice. In fact, for example, when $T(\uno)>0$ then $C_{+}(\T)=\RR_{++}^{n}$ is the whole cone of positive vectors.

From its definition we notice that $C_+(\T)$  is the cone of nonnegative vectors with the least number of zero entries that is preserved  by $T$ and that $C_+(\T)$  is also preserved by the iterates of the shifted power methods. Precisely, if $\b x_{k+1}$ is computed from $\b x_k$ using  the updating formulas of either Algorithm \ref{alg:shiftedPM1} or Algorithm \ref{alg:shiftedPM2} and $\b x_k\in C_+(\T)$, then also $\b x_{k+1}\in C_+(\T)$. Moreover, we will show in the theorem below that $\T$ has a unique $\ell^p$-eigenvector in $C_+(\T)$ and that the two algorithms converge to it.

At the same time, we do not need to compute the cone $C_+(\T)$ beforehand, as we are guaranteed to converge within $C_+(\T)$ if we start with any positive vector $\b x_0>0$. More precisely, if $\b x_0>0$ and $(\b x_k)$ is the sequence generated  by either Algorithm \ref{alg:shiftedPM1} or Algorithm \ref{alg:shiftedPM2}, then all the vectors $\b x_k$ belong to $C_+(\T)$ when $k$ is large enough. In fact, if we start with $\b x_0$ entrywise positive, then $\b x_1$ has the same zero pattern as $T(\uno)$, $\b x_2$ has the same zero pattern as $T^2(\uno)$ and so forth. Thus,  $\b x_k\in C_+(\T)$ for all $k\geq k_0$.

In what follows,  absolute values are meant entrywise, that is for $\b x\in \RR^n$, $|\b x|$ denotes the vector with entries $|x_i|$, for $i=1,\dots,n$.

\begin{lemma}\label{lem:max_is_in_cone}
Let $\T \in \RR^{[d,n]}$ be entrywise nonnegative and let $f_p$ be defined as in \eqref{eq:rayleigh}, {then the maximum of $f_p$ is attained on a nonnegative vector.}
\end{lemma}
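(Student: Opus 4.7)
The plan is to exploit the fact that, since $\b T\geq 0$, the polynomial $\b x^T T(\b x)=\sum t_{i_1,\dots,i_d}x_{i_1}\cdots x_{i_d}$ has nonnegative coefficients, so that passing to $|\b x|$ entrywise can only increase its value, while leaving the denominator $\|\b x\|_p^d$ unchanged. Thus any maximizer can be replaced by its entrywise absolute value, giving a nonnegative maximizer.

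More precisely, I would first argue existence of the maximum: $f_p$ is $0$-homogeneous (in fact $f_p(t\b x)=f_p(\b x)$ for $t>0$), hence it suffices to maximize $f_p$ over the compact unit sphere $S_p=\{\b x\in\RR^n:\|\b x\|_p=1\}$, on which $f_p$ is continuous; therefore the maximum is attained at some $\b x^\star\in S_p$. Next, for any $\b x\in\RR^n$ I would use the triangle inequality together with $t_{i_1,\dots,i_d}\ge 0$ to obtain
\[
\b x^T T(\b x)\ \le\ |\b x^T T(\b x)|\ \le\ \sum_{i_1,\dots,i_d}t_{i_1,\dots,i_d}|x_{i_1}|\cdots|x_{i_d}|\ =\ |\b x|^{T} T(|\b x|),
\]
and observe that $\||\b x|\|_p=\|\b x\|_p$, so $f_p(\b x)\le f_p(|\b x|)$.

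Applying this to a maximizer $\b x^\star\in S_p$ yields $f_p(|\b x^\star|)\ge f_p(\b x^\star)=\max_{S_p} f_p$, and since $|\b x^\star|\in S_p$ as well, $|\b x^\star|$ is itself a maximizer lying in $\RR_+^n$. This concludes the argument.

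There is essentially no serious obstacle: the only points requiring a brief justification are the existence of the maximum (via compactness of $S_p$ and $0$-homogeneity of $f_p$) and the componentwise inequality obtained from triangle inequality plus nonnegativity of the coefficients of $T$. Both are standard; the degenerate case $\b T=0$ is trivial since then $f_p\equiv 0$ and every vector, in particular every nonnegative vector, is a maximizer.
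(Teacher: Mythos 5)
Your proof is correct and follows essentially the same route as the paper: bound $\b x^T T(\b x)$ by $|\b x|^T T(|\b x|)$ using nonnegativity of the entries of $\T$, note that the $\ell^p$ norm is unchanged under taking absolute values, and conclude that the absolute value of a maximizer is a nonnegative maximizer. The only difference is that you additionally make explicit the existence of a maximizer via $0$-homogeneity and compactness of the unit sphere, which the paper leaves implicit.
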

\begin{proof}
Let $\b x\in\RR^n$. Since $\T$ has nonnegative entries, we have $T(\b x)\leq T(|\b x|)$. Thus, if $\b x\neq 0$, we have
$
|f_p(\b x)|=\frac{|\b x^T T(\b x)|}{\|\b x \|_p^d } \leq \frac{|\b x|^T T(|\b x|)}{\| \, |\b x| \, \|_p^d} = f_p(|\b x|).
$
Hence, if $\b y\in\RR^n\setminus\{0\}$ is a global maximizer of $f_p$, then also $|\b y|$ is a global maximizer.
\end{proof}

The following main result holds:
\begin{theorem}\label{thm:convergence}
{ Let $\T\geq 0$, $k_0=k_0(\T)$ be as in Definition \ref{def:cone} and let $p>d$. Then there exists a unique solution to \eqref{eq:ellp-eigenvalue} in $C_+(\T)$, that is a unique $\ell^p$-eigenvector $\b u \in C_+(\T)$. We will define this as the Perron $\ell^p$-eigenvector. In particular, for any $\sigma \geq 0$ and any  $\b x_0 >0$,  the sequences $(\lambda_k)$ and  $(\b x_k)$ defined by either Algorithm \ref{alg:shiftedPM1} or Algorithm \ref{alg:shiftedPM2}  are such that:
 \begin{enumerate}
\item  $\b x_k \in C_+(\T)$ for all $k\geq k_0$;
\item $(\b x_k)$ converges to the Perron $\ell^p$-eigenvector $\b u$ of $\T$ in $C_+(\T)$ with $\|\b u\|_p=1$;
\item $(\lambda_k)$ converges to the $\ell^p$-eigenvalue $\lambda >0$  of $\T$ corresponding to $\b u$.
 \end{enumerate}}
\end{theorem}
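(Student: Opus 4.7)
The strategy is to recast both algorithms as fixed-point iterations of a single map $G_i$ on the compact set $S := \{\b x \in C_+(\T) : \|\b x\|_p = 1\}$ and to run a nonlinear Perron-Frobenius / Banach fixed-point argument with respect to the Hilbert projective metric $d_H$.

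First I would dispose of claim 1. Starting from $\b x_0 > 0$ entrywise, the update on line \ref{line:shiftedPM1} (resp.\ line \ref{line:shiftedPM2}) produces a vector whose support coincides with that of $T(\b x_k)$, because the mask $\sign(T(\b x_k))\circ$ was inserted precisely so that the shift term cannot pollute any zero introduced by $T(\b x_k)$. A straightforward induction then shows $\mathrm{supp}(\b x_k) = \mathrm{supp}(T^k(\uno))$, which stabilizes to the support defining $C_+(\T)$ once $k \geq k_0$. In addition, the forward-invariance of $C_+(\T)$ under both algorithms is immediate from the same masking.

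For claims 2 and 3 I would exploit the fact that $(S, d_H)$ is a complete metric space and that $\T\geq 0$ makes $T$, $\Phi_p$ and $\Phi_q$ order-preserving. The pivotal observation, coming from Birkhoff--Hopf, is that $\Phi_q\circ T$ is positively homogeneous of degree $(d-1)/(p-1)<1$, using $p>d$; hence $\Phi_q\circ T$ strictly contracts $d_H$ with rate at most $(d-1)/(p-1)$. To control the shift I would apply the standard inequality $d_H(\b u + \b v,\b u' + \b v')\leq \max\{d_H(\b u,\b u'),\,d_H(\b v,\b v')\}$. For Algorithm \ref{alg:shiftedPM2} the term inside $\Phi_q$ is a sum of two order-preserving pieces of degrees $d-1$ and $p-1$; its $d_H$-expansion factor $p-1$ is exactly cancelled by the $1/(p-1)$ coming from $\Phi_q$ on the outside. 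For Algorithm \ref{alg:shiftedPM1} the shift $\sigma\b x$ is a $d_H$-isometry, and the sum inequality gives a bound involving $\max\{(d-1)/(p-1),1\}$. Once strict contraction is secured, Banach's theorem on $(S, d_H)$ delivers a unique fixed point $\b u \in S$ and geometric convergence $\b x_k\to \b u$; unwinding $G_i(\b u)=\b u$ recovers $T(\b u)=\lambda\,\Phi_p(\b u)$ with $\lambda = f_p(\b u)>0$ (positivity follows from $\b u\in C_+(\T)$ and $T(\b u)\neq 0$), proving claim 2, uniqueness of the Perron $\ell^p$-eigenvector, and positivity in claim 3. Finally $\lambda_k\to\lambda$ follows from continuity of $f_p$ on $S$.

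The hardest step is precisely the last one: both sum-inequality bounds above only yield $d_H$-non-expansion of $G_i$, not strict contraction, and without strictness Banach's theorem does not apply. To bridge this gap I would either (a) sharpen the sum inequality by noting that the max is strict whenever the two summands do not attain their extremal ratios at the same coordinate, which, thanks to the different homogeneity degrees of $T$ and $\Phi_p$ (respectively $\Phi_q T$ and the identity), fails only on scalar multiples; or (b) follow the classical route of showing that some iterate $G_i^m$ is a genuine strict contraction, exploiting that the strictly contractive component $\Phi_q\circ T$ is present at every step and eventually dominates the non-expansive shift. Either strategy, combined with the forward-invariance of $C_+(\T)$, gives the conclusion uniformly in $\sigma \geq 0$, so the classical unshifted case is covered as well.
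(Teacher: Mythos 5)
Your proposal follows essentially the same route as the paper: both recast the iterations as self-maps of $C_+(\T)\cap\{\b x:\|\b x\|_p=1\}$ endowed with the Hilbert metric, use the homogeneity degree $(d-1)/(p-1)<1$ of $\Phi_q\circ T$ together with the sum/mediant inequality to absorb the shift, and secure strictness exactly as in your option (a) — on the unit sphere two distinct points satisfy $M(\b x/\b y)>1>m(\b x/\b y)$, so the contractive summand has strictly smaller oscillation ratio than the shift summand and the maximum in the sum inequality is never attained. The one caveat is that this yields a strict but non-uniform contraction (the factor degrades with $\sigma$ and as the points approach each other), so the conclusion comes from the Edelstein-type fixed-point theorem for contractive maps — which is what the paper actually invokes — rather than from Banach's theorem with a uniform rate, and the ``geometric convergence'' you claim is not justified by this argument (nor asserted by the theorem).
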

\begin{proof}
{Point $1$ holds by definition of $C_+(\T)$. So, for any $k\geq k_0$ all the points of the sequence $(\b x_k)$ are in $C_+(\T)$. We show below that the iterators are contractions on $C_+(\b T)$. }

 Given two vectors $\b x,\b y\in C_+(\T)$, consider the following quantities
 $$M(\b x/\b y)  = \inf\{\mu>0 : \b x \leq \mu \, \b y\}\qquad m(\b x/\b y) = \sup \{\mu>0 : \b x\geq \mu \, \b y\}\, .$$
Since $C_+({\b T})$ is a simplicial cone (see \cite{lemmens22birkhoff} e.g.), for $\b x, \b y \in C_{+}(\b T)$ it holds $M(\b x/\b y)=\max_{y_i \neq 0 } x_i/y_i$ and $m(\b x/\b y)=\min_{y_i \neq 0 } x_i/y_i$.
 %
 %
 %
 For any $\b x,\b y\in C_+(\T)$, by definition we have $m(\b x/\b y)\b y\leq \b x\leq M(\b x/\b y)\b y$, thus
 $$m(\b x/\b y)^{d-1} T(\b y) \leq T(\b x) \leq M(\b x/\b y)^{d-1} T(\b y)\, .$$
For $\sigma\geq 0$ and $p>1$, consider the following two mappings:
$$
 H_\sigma(\b x) = \Phi_p^{-1}(T(\b x))+\sigma \b x \quad \text{ and } \quad K_\sigma(\b x) = \Phi_p^{-1}(T(\b x)+\sigma \Phi_p(\b x))\, .
$$
 Note that both $H_\sigma$ and $K_\sigma$ preserve $C_+(\T)$, that is for any $\b x\in C_+(\T)$ we have $H_\sigma(\b x)\in C_+(\T)$ and $K_\sigma(\b x) \in C_+(\T)$. {Also note that when $\b x\in C_+(\T)$ then $\b z = T(\b x)\in C_+(\T)$, thus $\sign(\b z)\circ \b x=\b x$ and $\sign(\b z)\circ \Phi_p(\b x)=\Phi_p(\b x)$. Hence, as}  $q=p/(p-1)$, we have $\Phi_q=\Phi_p^{-1}$ and $H_\sigma(\b x)$, $K_\sigma(\b x)$ are the iterations maps in Algorithms \ref{alg:shiftedPM1} and \ref{alg:shiftedPM2}.
Observe, moreover, that for any two distinct points $\b x$ and $\b y$ on the $\ell^p$-sphere $\S_p = \{\b x:\|\b x\|_p=1\}$, we have $M(\b x/\b y)>1$ and $m(\b x/\b y)<1$. In fact $M(\b x/\b y)\leq 1$ implies $x_i\leq y_i$ for any $i$ and $x_i<y_i$ for at least one $i$ (as $\b x\neq \b y$), which contradicts the fact that $\b x$ and $\b y$ have same $\ell^p$-norm. An analogous contradiction follows by assuming $m(\b x/\b y)\geq 1$.

Therefore, as $p>d$, for any two $\b x,\b y \in C_+(\T)\cap \S_p$ we have  $M(\b x/\b y)^{\frac{d-1}{p-1}}<M(\b x/\b y)$,  $m(\b x/\b y)^{\frac{d-1}{p-1}}> m(\b x/\b y)$, and
 \begin{align}
H_\sigma(\b x)&\leq M(\b x/\b y)^{\frac{d-1}{p-1}}\, \Phi_p^{-1}(T(\b y)) + \sigma M(\b x/\b y)\b y < M(\b x/\b y) H_\sigma(\b y)\label{eq:a}\\
H_\sigma(\b x)&\geq m(\b x/\b y)^{\frac{d-1}{p-1}}\, \Phi_p^{-1}(T(\b y)) + \sigma m(\b x/\b y)\b y > m(\b x/\b y) H_\sigma(\b y)\label{eq:b}\\
K_\sigma(\b x)&\leq \Phi_p^{-1}(M(\b x/\b y)^{d-1} T(\b y) + \sigma M(\b x/\b y)^{p-1}\Phi_p(\b y)) < M(\b x/\b y) K_\sigma(\b y)\label{eq:c}\\
K_\sigma(\b x)&\geq \Phi_p^{-1}(m(\b x/\b y)^{d-1} T(\b y) + \sigma m(\b x/\b y)^{p-1}\Phi_p(\b y)) > m(\b x/\b y) K_\sigma(\b y)\, . \label{eq:d}
 \end{align}
Let us now consider the normalized maps
$$
\tilde H_\sigma(\b x) = H_\sigma(\b x)/\|H_\sigma(\b x)\|_p\quad  \text{ and } \quad \tilde  K_\sigma(\b x) = K_\sigma(\b x)/\|K_\sigma(\b x)\|_p
$$
and the Hilbert's projective distance between $\b x$ and $\b y$ in $C_+(\T)$, defined by
\begin{equation}\label{eq:hilbert-distance}
d_H(\b x,\b y) = \log\left(\frac{M(\b x/\b y)}{m(\b x/\b y)}\right)\, .
\end{equation}
Note that both $\tilde H_\sigma$ and $\tilde K_\sigma$ preserve the set $C_+(\T)\cap \S_p$. Moreover, as $d_H$ is scale invariant, combining inequalities \eqref{eq:a}--\eqref{eq:d} we see that both  $\tilde H_\sigma$  and $\tilde K_\sigma$ are contractions on $C_+(\T)\cap \S_p$ with respect to $d_H$. 
In fact we get
$$d_H(\tilde H_\sigma(\b x),\tilde H_\sigma(\b y))=\log\left(\frac{M(H_\sigma(\b x)/H_\sigma(\b y))}{m(H_\sigma(\b x)/H_\sigma(\b y))}\right)<\log\left(\frac{M(\b x/\b y)}{m(\b x/\b y)}\right)=d_H(\b x,\b y)$$
and similarly for $\tilde K_\sigma$.

As $C_+(\T)\cap \S_p$ is complete (see \cite{gautier2018contractivity} e.g.), the Banach's fixed point theorem (see \cite{edelstein1962fixed}) implies that there exits a unique $\b u\in C_+(\T)\cap \S_p$ such that, for any initial choice $\b x_0>0$, we have $\tilde H_\sigma(\b x_k)\to  \b u=\tilde H_\sigma(\b u)$ as $k\to \infty$. Similarly $\tilde K_p(\b x_k)\to \b v=\tilde K_\sigma(\b v)$ for a unique $\b v\in C_+(\T)\cap \S_p$.  Thus $H_\sigma(\b u) = \lambda_1 \b u$ and $K_\sigma(\b v)=\lambda_2 \b v$ with $\lambda_1 = \|H_\sigma(\b u)\|_p$ and $\lambda_2=\|K_\sigma(\b v)\|_p$.
We now show that $\b u$ and $\b v$ coincide.  In fact, from $K_\sigma(\b v)=\lambda_2\b v$, we have
$\Phi_p^{-1}(T(\b v)) = \Phi_p^{-1}(\Phi_p(\lambda_2)-\sigma)\b v =: \alpha \, \b v$.
Therefore $H_\sigma(\b v) = (\alpha + \sigma) \b v$, that is $\b v$ is the fixed point  of $\tilde H_\sigma$ in $C_+(\T)\cap \S_p$, i.e. $\b v=\b u$.

 Finally, note that $\b u$ is an   eigenvector of  $\T$ with eigenvalue $\lambda = \Phi_p(\lambda_2)-\sigma = \Phi_p(\lambda_1-\sigma)$ and such $\lambda$ is positive. In fact, as both $\Phi_p(\b u)$ and $T(\b u)$ are nonnegative vectors in $C_+(\T)$, the identity $T(\b u)=\lambda\Phi_p(\b u)$ implies $\lambda>0$.
%
 %
\end{proof}

\begin{corol}
If $\T \geq 0$ is symmetric and $p>d$, then the sequence $(\lambda_k)$ generated by either Algorithm \ref{alg:shiftedPM1} or Algorithm \ref{alg:shiftedPM2} converges to $r_p(\T)$.
\end{corol}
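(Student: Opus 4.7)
The plan is to combine the convergence result of Theorem \ref{thm:convergence} with the variational characterization of $r_p(\T)$ available in the symmetric case. Theorem \ref{thm:convergence} already gives $(\lambda_k)\to\lambda$, where $\lambda$ is the positive $\ell^p$-eigenvalue of $\T$ associated with the Perron eigenvector $\b u\in C_+(\T)$ and $\|\b u\|_p=1$. From $T(\b u)=\lambda\,\Phi_p(\b u)$ one immediately computes $\lambda=\b u^T T(\b u)=f_p(\b u)$, so the problem is reduced to proving the identity $\lambda=r_p(\T)$.

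The symmetry of $\T$, together with the variational formula in the remark following \eqref{eq:spectral_radius}, gives $r_p(\T)=\max_{\|\b x\|_p=1}|f_p(\b x)|$, from which the upper bound $\lambda=f_p(\b u)\leq r_p(\T)$ is immediate. For the matching lower bound, I would invoke Lemma \ref{lem:max_is_in_cone} to select a nonnegative maximizer $\b y^*\geq 0$ with $\|\b y^*\|_p=1$ and $f_p(\b y^*)=r_p(\T)>0$. Since $f_p$ is differentiable on $\RR^n\setminus\{0\}$ and $\b y^*$ is a global maximum, $\b y^*$ is a critical point of $f_p$, so Proposition \ref{prop:variational} implies that $\b y^*$ is itself an $\ell^p$-eigenvector of $\T$ with eigenvalue $r_p(\T)$.

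The final step is to identify $\b y^*$ with $\b u$ via the uniqueness statement in Theorem \ref{thm:convergence}, which requires proving that $\b y^*\in C_+(\T)$. The eigenvector equation $T(\b y^*)=r_p(\T)\Phi_p(\b y^*)$ forces $T(\b y^*)$ and $\b y^*$ to share the same support $S$, and the bound $\b y^*\leq\|\b y^*\|_\infty\,\uno$ together with the monotonicity of $T$ on the nonnegative cone give $S\subseteq\mathrm{supp}(T^k(\uno))$ for every $k$. The main obstacle is ruling out a strict inclusion: if $S$ were strictly smaller than the eventual support of $T^k(\uno)$, the restriction $\b y^*|_S$ would be a positive Perron eigenvector of the proper subtensor $\T|_S$ with spectral radius still equal to $r_p(\T)$, and a monotonicity argument for the $\ell^p$-spectral radius of nonnegative subtensors under enlargement of the index set, combined with the maximality of $\b y^*$, would force $S$ to coincide with the eventual support defining $C_+(\T)$. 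Once $\b y^*\in C_+(\T)$ is established, the uniqueness of the Perron eigenvector in $C_+(\T)$ asserted by Theorem \ref{thm:convergence} yields $\b y^*=\b u$, and therefore $r_p(\T)=f_p(\b y^*)=f_p(\b u)=\lambda$, completing the argument.
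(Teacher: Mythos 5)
Your first two paragraphs track the paper's own proof almost step for step: the upper bound $\lambda\le r_p(\T)$ via the variational characterization, the choice of a nonnegative maximizer via Lemma \ref{lem:max_is_in_cone}, its identification as an $\ell^p$-eigenvector with eigenvalue $r_p(\T)$ via Proposition \ref{prop:variational}, and the support inclusion $\mathrm{supp}(\b y^*)\subseteq\mathrm{supp}(T^k(\uno))$ for all $k$ are all there. The gap is precisely in the step you flag as ``the main obstacle.'' Showing that the nonnegative maximizer lies in $C_+(\T)$ --- i.e., that its support cannot be a \emph{proper} subset of the support of $C_+(\T)$ --- is the crux of the whole corollary, and the proposed ``monotonicity argument for the $\ell^p$-spectral radius under enlargement of the index set'' cannot deliver it: monotonicity only gives the non-strict inequality $r_p(\T|_S)\le r_p(\T)$, which is perfectly consistent with a global maximizer supported on a proper subset $S$. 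In fact the claim you need is \emph{false} when $p=d$: for $\T=\mathrm{diag}(2,1)$ with $d=p=2$ one has $T(\uno)>0$, hence $C_+(\T)=\RR^2_{++}$, yet the nonnegative maximizer of $f_2$ is $(1,0)^T\notin C_+(\T)$. Any correct completion must therefore use $p>d$ in an essential way at exactly this point, and your sketch never does.

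The paper closes this gap with a convexity argument in which $p>d$ enters explicitly. Assuming $\lambda<r_p(\T)$, it forms the mixture $\b y=\alpha\b u^*+\beta\b u$ with $\alpha^p+\beta^p=1$; since $\b u\in C_+(\T)$ has full support and $\b u^*$ is supported inside $\mathrm{supp}(C_+(\T))$, the vector $\b y$ lies in $C_+(\T)$, and the inequality $\alpha^d=(1-\beta^p)^{d/p}>1-\beta^d$ (valid only because $p>d$) gives $f_p(\b y)\ge\alpha^d\lambda+\beta^d r_p(\T)>\lambda$. This contradicts $\lambda=\max_{\b x\in C_+(\T)}f_p(\b x)$, which follows from the uniqueness part of Theorem \ref{thm:convergence}, so $\lambda=r_p(\T)$. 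You could graft this perturbation argument onto your framework in place of the unproved ``strict inclusion is impossible'' claim; as written, however, the proposal does not constitute a complete proof.
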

\begin{proof}
Let  $\b w$ be any $\ell^p$-eigenvector of $\T$ with $\|\b w\|_{p}=1$ and eigenvalue $\mu$. We have
$$|\mu| = |\b w^T T(\b w)| \leq |\b w|^T T(|\b w|)  \leq \max_{\|\b x\|_p=1}f_p(\b x)=: \tilde r\, .$$
Let $\b x^*$ be a maximizer of $f_p$ and let $\b u^* = \b x^*/\|\b x^*\|_p$. As $f_p(\b x^*)=f_p(\b u^*)$ we deduce that $\b u^*$ is a maximizer too and by Lemma \ref{lem:max_is_in_cone} we can assume $\b u^*\geq 0$.  By Proposition \ref{prop:variational} we have that $\b u^*$ is an $\ell^p$-eigenvector of $\T$, that is $T(\b u^*)=\tilde r \, \Phi_p(\b u^*)$, with $\tilde r = r_p(\T)= f_p(\b u^*)$. In particular, $r_p(\T)^{\frac{1}{p-1}} \b u^* = \Phi_p^{-1}(T(\b u^*))$ implies that for all $k\geq 1$ there exists $\lambda_k>0$ such that $\lambda_k \b u^* = (\Phi_p^{-1}T)^k(\b u^*)$. Thus,  if $T^k(\b u^*)_i=0$ for some integer $k$, then also $(\b u^*)_i=0$. In other words, $\b u^*$ is zero at least in all the positions where the vectors of $C_+(\T)$ are zero.

Now, let $\b u$ be the unique eigenvector of $\T$ in $C_+(\T)$ with eigenvalue $\lambda>0$, limit of the sequences generated by either Algorithm \ref{alg:shiftedPM1} or Algorithm \ref{alg:shiftedPM2} as in Theorem \ref{thm:convergence}. Suppose that $\lambda < r_p(\T)$ and let $\alpha,\beta\in(0,1)$ be such that $\alpha^p+\beta^p=1$. We have $\b u\neq \b u^*$ and if $\b y = \alpha \b u^* + \beta \b u$, then $\b y \in C_+(\T)$ and $\|\b y\|_p=1$. Therefore,
$$
f_p(\b y) = \b y^TT(\b y) \geq (\alpha \b u + \beta \b u^*)^T \Big(\alpha^{d-1}T(\b u) + \beta^{d-1}T(\b u^*)\Big) \geq \alpha^d f_p(\b u) + \beta^d f_p(\b u^*)\, .
$$
As $\alpha = (1-\beta^p)^{1/p}$ and $p>d$, we have $\alpha^d = (1-\beta^p)^{d/p}>(1-\beta^p)>(1-\beta^d)$ and thus we can choose $\alpha$ and $\beta$ so that
$$
f_p(\b y) \geq \alpha^d f_p(\b u) + \beta^d f_p(\b u^*) = \alpha^d \lambda + \beta^d r_p(\T) > \lambda.
$$
On the other hand, the uniqueness of $\b u$ implies that $    \lambda=\max_{\b x \in C_+(\b T)} f_p(\b x)$ which yields a contradiction. Thus $\lambda = r_p(\T)$, concluding the proof.
\end{proof}


Before moving forward, let us briefly comment on Theorems \ref{thm:PF} and \ref{thm:convergence}. By considering the general $\ell^p$-eigenvalue problem, Theorem \ref{thm:convergence} completes the work of \cite{liu2010always} and shows that both Algorithms \ref{alg:shiftedPM1} and \ref{alg:shiftedPM2} converge for any square tensor $\T$ and any choice of $p>d$. Moreover, by using the cone $C_+(\T)$ rather than the whole positive orthant in $\RR^n$,  we partially extend the Perron-Frobenius theorem of \cite{gautier2017tensor} to the case of tensors with some zero unfolding. In fact, Theorem \ref{thm:convergence} shows that Theorem \ref{thm:PF} holds unchanged without the assumption $T(\uno)>0$ and replacing the positive orthant  with $C_+(\T)$.
However, we point out that when $C_+(\T)=\RR^n_{++}$    and $\sigma=0$,  Theorem \ref{thm:PF} actually tells us more than the above Theorem \ref{thm:convergence}. First of all the proof of Theorem \ref{thm:PF} shows that the power sequence in that case is associated with a Lipschitz contractive map, which is a stronger property than the contractivity shown above in Theorem \ref{thm:convergence}. This allows us to show that the sequence for $\sigma=0$ converges linearly, with an explicit  convergence ratio. Moreover, Theorem \ref{thm:PF} ensures convergence of the power sequence   to $r_p(\T)$, without assuming any symmetry on $\T$. We believe that the discussed results of Theorem \ref{thm:PF} can be transferred to the case of shifted power sequences and to tensors with some zero unfolding, but this analysis goes beyond the scope of this paper and is postponed to future works.

\subsection{Dependence upon the parameter $p$}
\def\map{\b x^*\!} \def\epsilon{\varepsilon} \def\ep{\epsilon}
{In this subsection we show that the Perron $\ell^p$-eigenvector depends continuously on the parameter $p$. To this end, let us first define the following maps,}
\begin{equation*}
	H(\b x, p):= \Phi_p^{-1}(T(\b x))\quad  \hbox{ and } \quad  \tilde{H}(\b x, p):= \frac{H(\b x, p)}{\|H(\b x, p)\|_p}.
\end{equation*}
{Note that $H(\b x, p)=H_0(\b x)=K_0(\b x)$ (respectively $\tilde{H}(\b x, p)=\tilde{H}_0(\b x)=\tilde{K}_0(\b x)$) being $H_0, \; K_0$  (respectively $\tilde{H}_0, \; \tilde{K}_0$) the maps introduced in the proof of Theorem \ref{thm:convergence} for the choice $\sigma=0$.}
With this notation and due to Theorem \ref{thm:convergence} we have that for any $p>d$, there exists a unique ${\b x}^{*}$ in $C_+(\b T)$ such that $\tilde{H}(\b x^*, p)=\b x^*$ and such that $\b x^*$ is the unique Perron $\ell^p$-eigenvector of $\T$.  Thus, with a slight abuse of notation, consider the following mapping,
$$
\map:(d,+\infty) \to C_+(\T), \qquad p \mapsto \map(p) \text{ such that }\tilde{H}(\map(p), p)=\map(p)\,,
$$
which to a given $p\in (d,+\infty)$  assigns the unique Perron $\ell^p$-eigenvector of $\T$.

In order to prove  Theorem \ref{theo:fixed_point_continuity}, which states the continuous dependence on the parameter $p$,  we need the following additional technical lemma.
\begin{lemma} \label{lem:taylor_phi}
	Let  $\T\geq 0$ and $p >d\geq 2$. 
	For $\b x \in C_+(\T)$ it holds
	\begin{equation}
	\Phi_{p+\epsilon}^{-1}(\b x)=\Phi_p^{-1}(\b x)+ \Psi(\b x) \epsilon +o(\epsilon)\, ,
	\end{equation}
	where $\Psi(\b x)_i = -\frac{1}{(p-1)^2}\,x_i^{\frac{1}{p-1}} \ln (x_i)$ if $x_i>0$ and $\Psi(\b x)_i = 0$, otherwise.
\end{lemma}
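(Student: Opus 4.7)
The plan is to reduce the vector identity to a scalar Taylor expansion component by component. Fix $\b x\in C_+(\T)$ and look at the $i$-th entry of $\Phi_{p+\epsilon}^{-1}(\b x)$. By definition of the duality map, $\Phi_r^{-1}(\b x)_i=\mathrm{sign}(x_i)|x_i|^{1/(r-1)}$; since $\b x\ge 0$ this reduces to $x_i^{1/(p-1+\epsilon)}$ when $r=p+\epsilon$. Thus the claim amounts to two case analyses: $x_i=0$, which is immediate because every positive power of $0$ is $0$ and the remainder $\Psi(\b x)_i$ is defined to vanish; and $x_i>0$, which is the only substantive case.

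For $x_i>0$, I would write $x_i^{1/(p-1+\epsilon)}=\exp\!\bigl(\alpha(\epsilon)\ln x_i\bigr)$ with $\alpha(\epsilon):=1/(p-1+\epsilon)$. A standard geometric-series expansion gives
\begin{equation*}
\alpha(\epsilon)=\frac{1}{(p-1)\bigl(1+\epsilon/(p-1)\bigr)}=\frac{1}{p-1}-\frac{\epsilon}{(p-1)^{2}}+o(\epsilon),
\end{equation*}
and then expanding the exponential to first order yields
\begin{equation*}
x_i^{1/(p-1+\epsilon)}=x_i^{1/(p-1)}\Bigl(1-\frac{\epsilon\,\ln x_i}{(p-1)^{2}}+o(\epsilon)\Bigr)=x_i^{1/(p-1)}-\frac{\epsilon}{(p-1)^{2}}\,x_i^{1/(p-1)}\ln x_i+o(\epsilon),
\end{equation*}
which is exactly the desired componentwise formula. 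Reassembling the coordinates gives the vector identity.

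Since $\b x$ has only $n$ components, uniformity of the little-$o$ term across entries is automatic: the remainder for each positive entry is an $o(\epsilon)$, and there are finitely many of them, so their sum (or max) is still $o(\epsilon)$. The only mild subtlety worth flagging is that the hypothesis $p>d\ge 2$ ensures $p-1>1$, so $\alpha(\epsilon)$ is well-defined and smooth in $\epsilon$ in a neighborhood of $0$, justifying the expansion; and that the zero-entry case is handled separately, consistent with the convention $\Psi(\b x)_i=0$ there. No real obstacle arises: the lemma is essentially the first-order Taylor expansion of $r\mapsto x^{1/(r-1)}$ in $r$ applied entrywise, packaged in the notation of the paper.
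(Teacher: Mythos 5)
Your proof is correct and follows essentially the same route as the paper's: expand the conjugate exponent $1/(p-1+\epsilon)$ to first order in $\epsilon$ and then use the first-order expansion of $x\mapsto x^{\delta}$ (equivalently of the exponential) entrywise. Your explicit treatment of the zero entries and of the uniformity of the $o(\epsilon)$ over finitely many coordinates are minor points the paper leaves implicit, but there is no substantive difference.
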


\begin{proof}
	Let $q_{\epsilon}=(p+\epsilon)/(p-1 +\epsilon) = 1 + 1/ (p-1 +\epsilon)$ be the conjugate exponent of $p+\epsilon$. For $ \epsilon$ {sufficiently small} it holds
	\begin{equation}
	q_\epsilon -1 =  \frac{1}{p-1 +\epsilon}=\frac{1}{p-1}-\frac{1}{(p-1)^2} \epsilon +o(\epsilon),
	\end{equation}
	and thus, for a sufficiently small $\epsilon$  we can write,
	\begin{equation}\label{eq:taylor1}
	\Phi_{p+\epsilon}^{-1}(\b x) = \Phi_{q_{\epsilon}}(\b x)=\b x^{\frac{1}{p-1}}\b x^{-\frac{1}{(p-1)^2} \epsilon +o(\epsilon)}.
	\end{equation}
	Moreover, since for $x>0$ it holds $x^{\epsilon}=1+\epsilon \ln(x)+o(\epsilon)$,
	we have
	\begin{equation} \label{eq:taylor2}
	x_i^{-\frac{1}{(p-1)^2} \epsilon +o(\epsilon)}=1-\frac{1}{(p-1)^2} \ln (x_i) \epsilon+o(\epsilon),
	\end{equation}
	and hence the thesis follows by combining \eqref{eq:taylor2} and \eqref{eq:taylor1}. 
\end{proof}
\begin{theorem} \label{theo:fixed_point_continuity} If $\T\geq 0$ and $p>d\geq 2$, then for any small enough $\ep>0$ we have 
$$
d_H\big(\map(p),\map(p+\epsilon)\big) = O(\epsilon)\, ,
$$
where $d_H$ is the Hilbert distance defined in \eqref{eq:hilbert-distance}. 
\end{theorem}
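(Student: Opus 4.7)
The plan is to combine the contractivity of $\tilde H(\cdot,p)$ already established inside the proof of Theorem~\ref{thm:convergence} with a quantitative control on the $p$-dependence of the iteration map, supplied by Lemma~\ref{lem:taylor_phi}. First I would make the contraction rate explicit: setting $\sigma=0$ in the inequalities \eqref{eq:a}--\eqref{eq:b} of the proof of Theorem~\ref{thm:convergence} yields, for every $\b x,\b y\in C_+(\T)$,
\begin{equation*}
m(\b x/\b y)^{\frac{d-1}{q-1}}\,H(\b y,q)\;\le\;H(\b x,q)\;\le\;M(\b x/\b y)^{\frac{d-1}{q-1}}\,H(\b y,q).
\end{equation*}
Because $d_H$ is scale-invariant, these estimates give
\begin{equation*}
d_H\bigl(\tilde H(\b x,q),\tilde H(\b y,q)\bigr)\;\le\;\frac{d-1}{q-1}\,d_H(\b x,\b y),
\end{equation*}
so $\tilde H(\cdot,p+\epsilon)$ is a Hilbert-metric contraction with constant $k_{p+\epsilon}=(d-1)/(p-1+\epsilon)<1$, uniformly bounded away from $1$ for small $\epsilon>0$.

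Next I would use the standard fixed-point perturbation argument. From $\map(p)=\tilde H(\map(p),p)$ and $\map(p+\epsilon)=\tilde H(\map(p+\epsilon),p+\epsilon)$, splitting with the triangle inequality and applying the contraction at $q=p+\epsilon$ to the pair $(\map(p),\map(p+\epsilon))$ gives
\begin{equation*}
d_H\bigl(\map(p),\map(p+\epsilon)\bigr)\;\le\;\frac{1}{1-k_{p+\epsilon}}\,d_H\bigl(\tilde H(\map(p),p),\tilde H(\map(p),p+\epsilon)\bigr).
\end{equation*}
Since the prefactor is $O(1)$ as $\epsilon\to 0^{+}$, it suffices to show that the right-hand Hilbert distance is $O(\epsilon)$.

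Finally I would estimate this last quantity via Lemma~\ref{lem:taylor_phi}. Using scale invariance of $d_H$ we may drop both normalisations, so it is enough to bound $d_H\bigl(H(\map(p),p),H(\map(p),p+\epsilon)\bigr)$. Setting $\b a:=H(\map(p),p)=\Phi_p^{-1}(T(\map(p)))$ and applying the lemma to $T(\map(p))\in C_+(\T)$ yields
\begin{equation*}
H(\map(p),p+\epsilon)\;=\;\b a+\epsilon\,\Psi(T(\map(p)))+o(\epsilon).
\end{equation*}
The zero pattern of $\Psi(T(\map(p)))$ is contained in that of $\b a$ (both vanish exactly where $T(\map(p))$ does), so on the support of $C_+(\T)$ the componentwise ratio equals $-(p-1)^{-2}\ln T(\map(p))_i$, which is finite and bounded. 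Writing $\b c_\epsilon:=H(\map(p),p+\epsilon)$, this gives $M(\b c_\epsilon/\b a)=1+O(\epsilon)$ and $m(\b c_\epsilon/\b a)=1+O(\epsilon)$, and the expansion $\log(1+t)=t+O(t^2)$ produces $d_H(\b c_\epsilon,\b a)=O(\epsilon)$, which closes the chain.

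\textbf{Main obstacle.} The delicate point is the interaction between the entrywise Taylor expansion and the Hilbert projective metric: one must verify that the perturbed vector remains inside $C_+(\T)$ for small $\epsilon$ (guaranteed by the matching zero patterns noted above) and that the logarithmic factors $\ln T(\map(p))_i$ stay uniformly bounded on the active coordinates, so that the $O(\epsilon)$ bound transfers cleanly from the componentwise expansion to the projective distance.
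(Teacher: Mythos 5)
Your proposal is correct and follows essentially the same route as the paper: a triangle-inequality split at the fixed points, the Hilbert-metric contraction of $\tilde H(\cdot,\cdot)$ with constant $(d-1)/(q-1)$ inherited from the proof of Theorem~\ref{thm:convergence}, and Lemma~\ref{lem:taylor_phi} to show the perturbation term is $O(\epsilon)$. The only cosmetic difference is that you absorb the contraction at exponent $p+\epsilon$ and evaluate the perturbation at $\map(p)$, whereas the paper contracts at exponent $p$ and evaluates the perturbation at $\map(p+\epsilon)$; the two are mirror images of the same argument and yield the same bound of the form $\epsilon\,(C_1-C_2+1)/(1-\gamma)$.
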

\begin{proof}
Using the triangle inequality and since the Hilbert metric is scale invariant, we have
\begin{align}\label{eq:s1}
\begin{aligned}
&d_{H}(\map(p),\map(p+\ep)) = d_{H}(\tilde{H}(\map(p), p),\tilde{H}(\map(p+\ep), p+\ep)) \\
&\leq
d_H(\tilde{H}(\map(p), p),\tilde{H}(\map(p+\ep), p))+
d_H(\tilde{H}(\map(p+\ep), p+\ep),\tilde{H}(\map(p+\ep), p))\\
&=
\underbrace{d_H(H(\map(p), p),H(\map(p+\ep), p))}_{(a)}+
\underbrace{d_H(H(\map(p+\ep), p+\ep),H(\map(p+\ep), p)).}_{(b)}
\end{aligned}
\end{align}	
We now upper bound both terms in $(a)$ and in $(b)$. Concerning part $(a)$, note that using \eqref{eq:a} and \eqref{eq:b} for $\sigma=0$, for any two $\b x, \b y \in C_+(\T)$ we have 
\begin{equation*}
d_H(H(\b x),H(\b y))=\log\left(\frac{M(H(\b x)/H(\b y))}{m(H(\b x)/H(\b y))}\right)\leq \log\left(\left\{\frac{M(\b x/\b y)}{m(\b x/\b y)}\right\}^{\frac{d-1}{p-1}}\right)=\Big(\frac{d-1}{p-1}\Big)d_H(\b x,\b y)
\end{equation*}
thus
\begin{equation}\label{eq:s2}
d_H(H(\map(p), p),H(\map(p+\epsilon), p)) \leq\gamma\, d_H(\map(p),\map(p+\epsilon)).
\end{equation}
with $\gamma = {(d-1)}/{(p-1)}$. As for  part $(b)$, notice that, for any $\b x \in C_+(\T)$ and $\epsilon$ sufficiently small, using Lemma \ref{lem:taylor_phi}, we have
\begin{align*}
	H(\b x, p+\epsilon)&=\Phi_{p+\ep}^{-1} (T(\b x))  = \Phi_{p}^{-1}(T(\b x))+\ep \Psi(T(\b x)) + o(\ep) \\
	&=H(\b x, p)+\epsilon \Psi(T(\b x))+o(\epsilon)\, .
\end{align*}
Hence we have,
\begin{equation*}
\begin{split}
& M(H(\b x, p+\ep) /H(\b x, p)) = \max_{i:x_i\neq 0}\frac{H(\b x, p)_i+\epsilon \Psi(T(\b x))_i+o(\epsilon)}{H(\b x, p)_i} \leq  1 + \epsilon C_1  +o(\epsilon),
\end{split}
\end{equation*}
 and
\begin{equation*}
\begin{split}
m(H(\b x, p+\ep) /H(\b x, p)) = \min_{i:x_i\neq 0}\frac{H(\b x, p)_i+\epsilon \Psi(T(\b x))_i+o(\epsilon)}{H(\b x, p)_i} \geq  1 + \epsilon C_2  +o(\epsilon),
%
\end{split}
\end{equation*}
where
$$
C_1:=\max_{i : x_i \neq 0}  \frac{\Psi(T(\b x))_i}{H(\b x, p)_i}\qquad \text{and} \qquad C_2:=\min_{i : x_i \neq 0}  \frac{\Psi(T(\b x))_i}{H(\b x, p)_i}\, .
$$
Eventually, we obtain
\begin{equation*}
d_H(H(\b x, p+\ep), H(\b x, p)) \leq \log\Big(\frac{1+\ep C_1 + o(\ep)}{1+\ep C_2 +o(\ep)}\Big) = \ep (C_1-C_2) + o(\ep)
\end{equation*}
Using the above inequality for $\b x = \map(p+\ep)$, together with  \eqref{eq:s1} and \eqref{eq:s2} we obtain
\begin{equation*}
d_{H}(\map(p),\map(p+\ep)) \leq \ep \frac{(C_1-C_2+1)}{(1-\gamma)}	
\end{equation*}
which concludes the proof.
\end{proof}

Theorem \ref{theo:fixed_point_continuity} shows that when $p>d$, the Perron $\ell^p$-eigenvector of a nonnegative tensor depends continuously on the parameter $p$. This theorem can be useful for example in the context of $H$-eigenvectors  because it allows us to argue that, in order to approximate a $H$-eigenvector of {$\T\geq 0$} we can use an $\ell^p$-eigenvector with $p\approx d$. In fact,  note that for nonnegative eigenvectors the duality map $\Phi_p$ coincides with taking the entrywise powers of the vector and thus $\ell^d$-eigenvectors coincide with $H$-eigenvectors. Note moreover that, if the contractivity property \eqref{eq:s2} holds also for the case $p=d$ (which does happen in practice, as discussed in \cite{gautier2018contractivity,fasino2019higher}), then  Theorem \ref{theo:fixed_point_continuity} would work also for $p=d$. This is particularly interesting because this argument holds without any irreducibility requirement on $\T$. Instead, for the case of $H$-eigenvectors, the convergence of the power method requires the tensor to be weakly irreducible (see {Definition \ref{def:wi} and} \cite[Thm.\ 3.3]{gautier2017tensor} or \cite[Thm.\ 3.1]{zhou2013efficient}), which is an expensive property to verify, especially for large tensors.

So, for example, if $T(\uno)>0$ but $\T\geq 0$ is not weakly irreducible, we can use the positive Perron $\ell^p$-eigenvector of $\T$ with $p\approx d$ to compute an approximation of a positive $H$-eigenvector. To our knowledge, there exists no other method with this type of guarantees to approximate a positive $H$-eigenvector of a reducible tensor.
We clarify this idea with an example in the {remainder} of this section.
\subsubsection{Computing positive $H$-eigenvectors {(case $p=d$)}} \label{sec:approx_Heigenpair} $ $
Given a nonnegative tensor $\T \in \RR^{[d,n]}$, consider the graph $G(\T)=(V,E)$ defined as follows: $V = \{1,\dots,n\}$ and  there is an edge from node $u$ to node $v$, i.e.\ $uv\in E$, if
	$$
	t_{u, j_2,\dots,j_{m-1},v,j_{m+1}, \dots, j_d}>0
	$$
	for some $m=2,3,\dots,d-1$.  We have
\begin{definition}[Weakly irreducible tensor] \label{def:wi}
	The tensor $\T$ is said to be weakly irreducible or, in the more general notation of \cite{gautier2017tensor}, $\{1,\dots,n\}$-weakly irreducible if $G(\T)$ is strongly connected.
\end{definition}

In \cite{gautier2017tensor}, points $(iii)$ of Theorems 3.1 and 3.3, it is shown that if a tensor is weakly irreducible then there exists a unique positive $H$-eigenvector and the power method converges to such $H$-eigenvector. However, in general, if the tensor is not weakly irreducible, then we have no guarantees of  convergence for the power method to a $H$-eigenpair (see \cite{gautier2017tensor}). Instead, {using our Theorem \ref{thm:convergence},} convergence is still ensured when $p>d$ and we can use values of $p$ slightly larger than $d$ to efficiently compute good approximations to positive $H$-eigenvectors.

For example, consider the following $3\times 3\times 3$ nonnegative tensor
\begin{equation}\label{eq:example_T}
\T(:,:,1) = \begin{bmatrix}0 &0 &1\\ 0 &0 &1\\ 0& 0& 0\end{bmatrix}\quad \T(:,:,2) = \begin{bmatrix}0 &1 &0\\ 0 &0 &0\\ 0& 0& 0\end{bmatrix} \quad \T(:,:,3) = \begin{bmatrix}0 &1 &0\\ 0 &0 &0\\ 0& 0& 1\end{bmatrix}\, .
\end{equation}
It is readily seen that this tensor is not weakly irreducible. In fact,  the graph of this tensor is
\begin{center}
\begin{tikzpicture}[thick, main node/.style={circle, fill=white, draw=black, align=center,inner sep=2,scale=1.2}]
  \newcommand*{\MainNum}{3}
  \newcommand*{\MainRadius}{1cm}
  \newcommand*{\MainStartAngle}{90}

  \path
    (0, 0) coordinate (M)
    \foreach \t [count=\i] in {$3$,$1$,$2$}  {
      +({\i-1)*360/\MainNum + \MainStartAngle}:\MainRadius)
      node[main node, align=center] (p\i) {\t}
    }
  ;

  \foreach \i in {1, ..., \MainNum} {
    \pgfextracty{\dimen0 }{\pgfpointanchor{p\i}{north}}
    \pgfextracty{\dimen2 }{\pgfpointanchor{p\i}{center}}
    \dimen0=\dimexpr\dimen2 - \dimen0\relax
    \ifdim\dimen0<0pt \dimen0 = -\dimen0 \fi
    \pgfmathparse{2*asin(\the\dimen0/\MainRadius/2)}
    \global\expandafter\let\csname p\i-angle\endcsname\pgfmathresult
  }

  \pgfmathsetmacro\StartAngle{ 0 + \MainStartAngle + \csname p1-angle\endcsname }
  \pgfmathsetmacro\EndAngle{  360/\MainNum + \MainStartAngle - \csname p2-angle\endcsname }
  \draw[{<[scale=1.3]}-]  (M) ++(\StartAngle:\MainRadius) arc[start angle=\StartAngle, end angle=\EndAngle, radius=\MainRadius];

  \pgfmathsetmacro\StartAngle{ 360/\MainNum + \MainStartAngle + \csname p2-angle\endcsname }
  \pgfmathsetmacro\EndAngle{  2*360/\MainNum + \MainStartAngle - \csname p3-angle\endcsname }
  \draw[{<[scale=1.3]}-{>[scale=1.5]}]  (M) ++(\StartAngle:\MainRadius) arc[start angle=\StartAngle, end angle=\EndAngle, radius=\MainRadius];

  \pgfmathsetmacro\StartAngle{ 2*360/\MainNum + \MainStartAngle + \csname p3-angle\endcsname }
  \pgfmathsetmacro\EndAngle{  360 + \MainStartAngle - \csname p1-angle\endcsname }
  \draw[-{>[scale=1.3]}]  (M) ++(\StartAngle:\MainRadius) arc[start angle=\StartAngle, end angle=\EndAngle, radius=\MainRadius];

  \node at (-2,0) {$G(\T)=$};
\end{tikzpicture}
\end{center}
which is certainly not strongly connected. Thus, uniqueness of a nonnegative $H$-eigenvector and convergence of the power method are not guaranteed. In fact, for this particular tensor, the eigenvector equation $T(\b x) = \lambda \Phi_p(\b x)$ for $p=d$ boils down to the system of equations
\begin{equation*}
\left\{\begin{array}{l}
x_3x_1 + x_2^2 + x_2x_3 =  \lambda\, x_1|x_1| \\
x_3x_1  = \lambda\, x_2 |x_2|\\
x_3^2 =  \lambda\, x_3|x_3|
\end{array}\right.
\end{equation*}
and thus, a few algebraic manipulations imply that all the $H$-eigenpairs  of $\T$ are (up to normalization) of the form
$$
\lambda=1,\; \b u = \begin{bmatrix}\mu\,|\mu|\\ \mu \\ 1\end{bmatrix} \qquad \text{or} \qquad \lambda=-1,\; \b v = \begin{bmatrix}\eta \, |\eta|\\ \eta \\ -1\end{bmatrix}
$$
where $\mu$ and $\eta$ are real numbers such that
$$
\mu(\mu^3-2\mu-1)(-\mu^3-1)=0,\qquad \eta(\eta^3-1)(-\eta^3+2\eta-1)=0\, .
$$
Therefore, the following nonnegative $H$-eigenvectors of \eqref{eq:example_T} correspond to the positive eigenvalue $\lambda =1$
$$
\b u_1 = \begin{bmatrix}0\\ 0 \\ 1\end{bmatrix}\qquad \text{and}\qquad \b u_2 = \frac 1 2 \begin{bmatrix}3+\sqrt 5\\ 1+\sqrt 5  \\ 2\end{bmatrix}
$$
and any of their nonnegative multiple.

\begin{figure}[t]
\centering
\includegraphics[width=.35\textwidth]{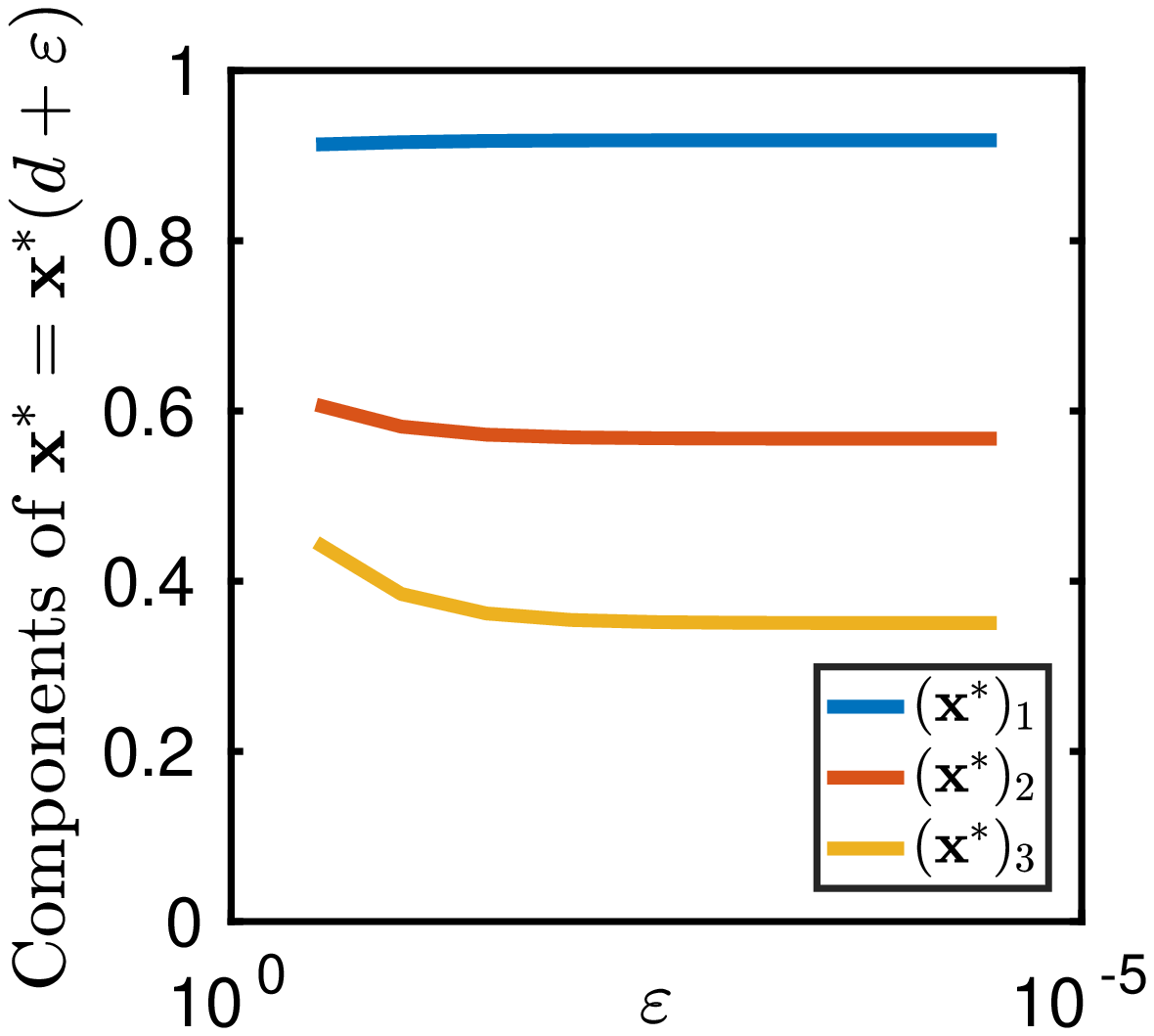}\hspace{2em}
\includegraphics[width=.35\textwidth]{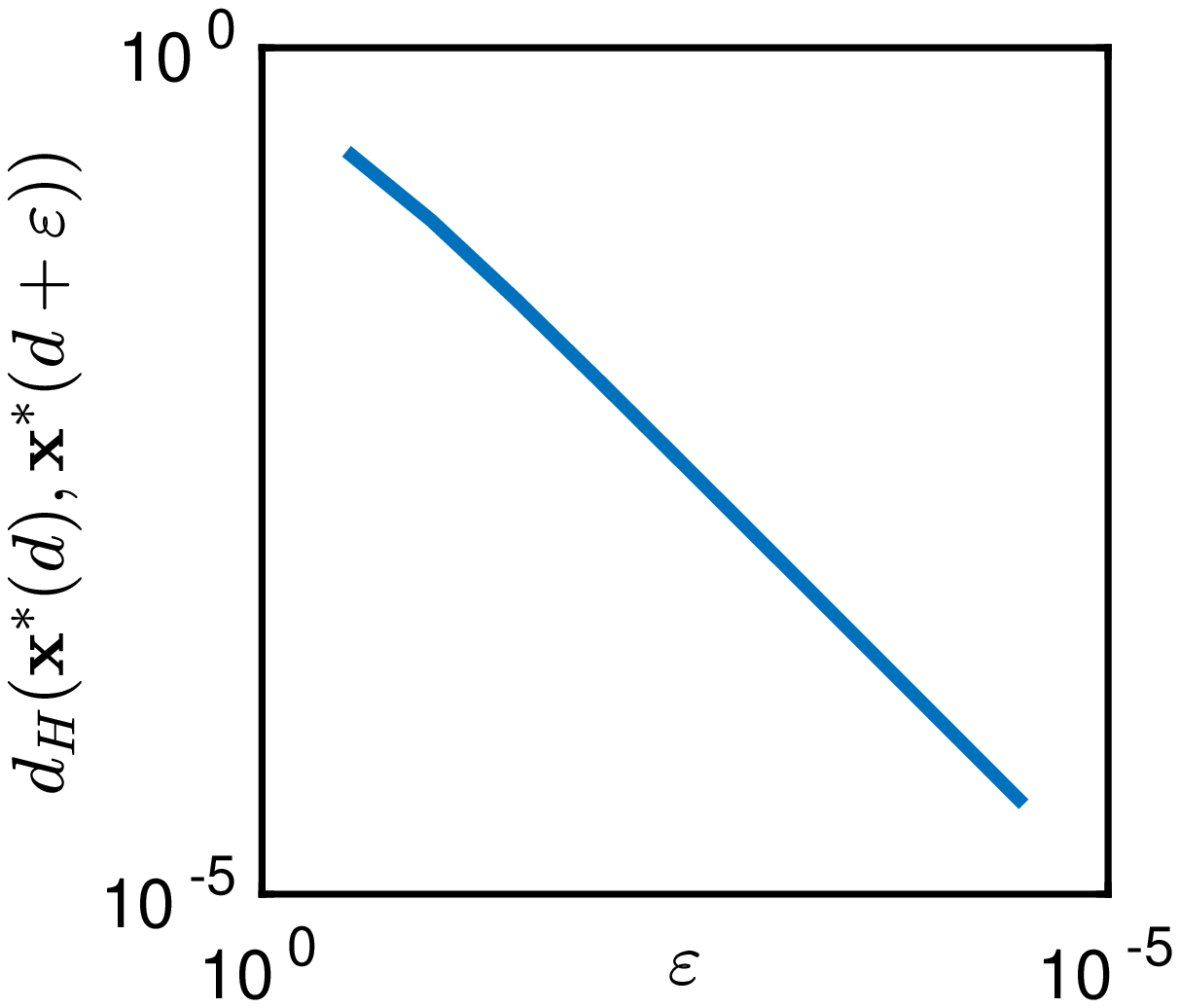}
\caption{Left: The three entries of the unique positive Perron $\ell^p$-eigenvector $\map(d+\ep)$, as $\ep$ decreases from $10^{-1}$ to $10^{-5}$. Right: LogLog plot of the Hilbert distance between $\map(d+\ep)$ and the $H$-eigenvector ${\tilde{\textbf{u}}_2}$, as $\ep$ decreases. The plot confirms the behavior of Theorem \ref{theo:fixed_point_continuity}. }\label{fig:dependence_on_p}
\end{figure}

On the other hand, $T(\uno) = [3, 1, 1]^T >0$, and thus $C_+(\T) = \RR^3_{++}$. Thus, Theorem \ref{thm:convergence} implies that for any $p>d=3$, such a tensor has a unique Perron $\ell^p$-eigenvector $\map(p)$ which is entrywise positive and which can be computed via either of Algorithms \ref{alg:shiftedPM1} and \ref{alg:shiftedPM2}. Moreover, Theorem \ref{theo:fixed_point_continuity} suggests that for $p\approx d$ we have $\map(p)\approx {\tilde{\textbf{u}}_2}$ {being $\tilde{\textbf{u}}_2:= \b u_2/ \|\b u_2\|_d $}, i.e.\ the Perron $\ell^p$-eigenvector approximates the {(normalized)} positive $H$-eigenvector. This phenomenon is shown in Figure \ref{fig:dependence_on_p}, which plots the values of the entries of  $\map(d+\epsilon)$ and the distance $d_H(\map(d+\ep),{\tilde{\textbf{u}}_2})$ for the tensor in \eqref{eq:example_T}, as $\ep>0$ decreases to zero.

\section{Numerical Experiments: Part 1}\label{sec:experiments_part1}
In this section we investigate experimentally the power method in its two shifted variants  described in Algorithms \ref{alg:shiftedPM1} and \ref{alg:shiftedPM2} with different values of the shift $\sigma$ and $p$ and on different test problems. For the sake of clarity we use  different subsections to describe the problem set and present the corresponding numerical results. All the numerical experiments are performed on a laptop running Linux with 16Gb memory and CPU Intel\textsuperscript{\textregistered} Core\texttrademark\ i7-4510U with clock 2.00GHz. The code is written and executed in MATLAB R2018b.

Let $\b x_k$ be the sequence of vectors generated by any of the methods when applied to address the $\ell^p$-eigenvector problem for the tensor $\T$, and let $\lambda_k = f_p(\b x_k)$ as defined in \eqref{eq:rayleigh}. All the presented figures in the following sections show the behavior of  the residual
\begin{equation} \label{eq:residual}
{\|T(\b x_k)-\lambda_k\Phi_p(\b x_k)\|_\infty}
\end{equation}
{up to iteration $30$.}

It is worth pointing out that often (e.g.\ in \cite{gleich2015multilinear,kolda2011shifted}) the convergence plots of power sequences for tensors show the relative error between consecutive iterates $\|\b x_k-\b x_{k+1}\|/\|\b x_{k+1}\|$ or the behavior of the eigenvalue sequence $\lambda_k$, rather than the point-wise residual \eqref{eq:residual}.
For the sake of brevity here we do not show plots of the relative error nor of the eigenvalue sequence. However we underline that: (a) the relative error between consecutive iterates $\|\b x_k-\b x_{k+1}\|_p/\|\b x_{k+1}\|_p$, as expected, always decreases at least as fast as the shown point-wise residual \eqref{eq:residual} and (b) the eigenvalue sequence always stabilizes for the test problem considered.

{In the following numerical results, as pointed out in Section \ref{sec:approx_Heigenpair}, we will use $p \approx d$ in order to investigate the numerical properties of Algorithms \ref{alg:shiftedPM1} or \ref{alg:shiftedPM2} when used to approximate $H$-eigenpairs.  Let us add, moreover, that this is the only numerical relevant case since, as shown in the proof of Theorem \ref{theo:fixed_point_continuity} the contractivty of our fixed point map increases accordingly to $d$.} {Finally, let us point out that in all the following numerical results we use a random vector ${\b{x}_0}$ as initial guess obtained using MATLAB's function \texttt{rand}}.

In all the figures we denote by \textsf{Alg1} and \textsf{Alg2} the sequences computed by Algorithms \ref{alg:shiftedPM1} or \ref{alg:shiftedPM2}, respectively, and by \textsf{PM} the power method sequence i.e.\ the case where $\sigma=0$.

\subsection{Nonnegative tensors with different irreducibility pattern}
As mentioned above, a shifted version of the power method for $H$-eigenvalue problems for nonnegative tensors has been introduced in \cite{liu2010always}. It is simple to note that, {in this case, i.e.,} when $p=d$, {the latter }method coincides with  Algorithm \ref{alg:shiftedPM2} proposed above.  In what follows, we refer to that method as the LZI algorithm, following the notation of \cite{zhang2012linear}. The latter paper introduces the notion of weakly positive tensor and proposes a convergence analysis of the LZI algorithm, proving the linear convergence of the method  for weakly positive tensors. However, due to Theorem \ref{thm:convergence}, {this} requirement on the structure of the tensor is unnecessary when $p>d$.
In this section we analyze the behavior of Algorithms \ref{alg:shiftedPM1} and \ref{alg:shiftedPM2} on the three test problems $\mathbf A, \mathbf B$ and $\mathbf C$ defined in \eqref{eq:tensorsABC} and considered in \cite{zhang2012linear}. All these three tensors are square of size $n\times n \times n$. $\mathbf A$ is irreducible, but not primitive nor weakly positive; $\mathbf B$ is primitive and weakly positive, but not essentially positive; $\mathbf C$ is primitive but not weakly positive. This implies that $C_+(\mathbf A)= C_+(\mathbf B)=C_+(\mathbf C)=\RR^n_{++}$.
\begin{align}\label{eq:tensorsABC}
\begin{aligned}
\mathbf A &= (a_{ijk})\quad \text{with} \quad a_{ijk}=\begin{cases}
1 & i=1,\,  j=k, \, 2\leq j\leq n\\
1 & j=k=1, \, 2\leq i \leq n\\
0 & otherwise
\end{cases}
\\
\mathbf B &= (b_{ijk})\quad \text{with} \quad b_{ijk}=\begin{cases}
i+j & j=k, \, i\neq j, \, 1\leq i,j\leq n\\
0 & otherwise
\end{cases}
\\
\mathbf C &= (c_{ijk})\quad \text{with} \quad c_{ijk}=\begin{cases}
1 & i=1, \, j=k=n\\
1 & j=k=1, \, 2\leq i\leq n\\
1 & i=n, \, 1\leq j=k\leq n-1\\
0 & otherwise
\end{cases}
\end{aligned}
\end{align}
\begin{figure}[p]
	\centering
	\includegraphics[width=.35\textwidth]{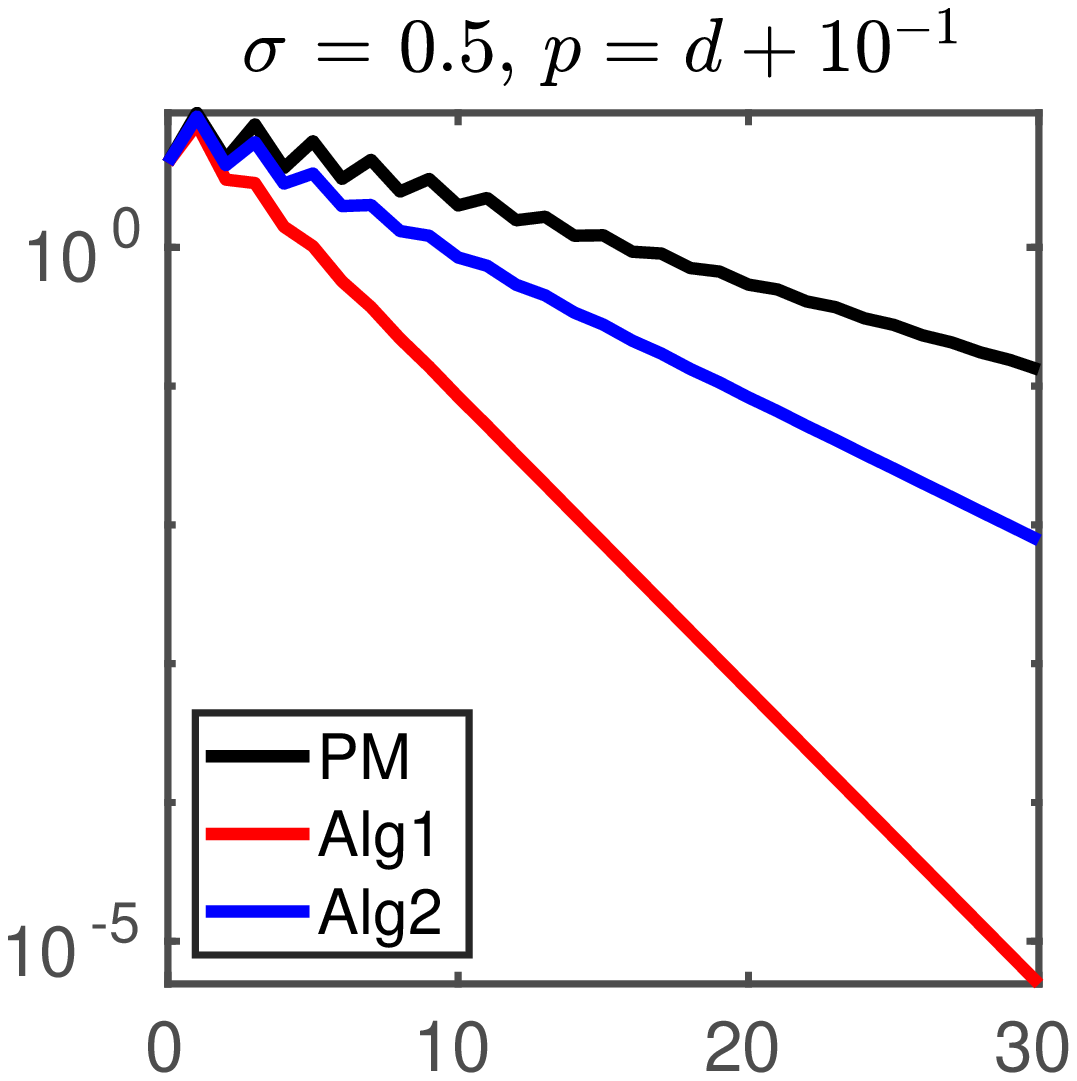}\quad
	\includegraphics[width=.35\textwidth]{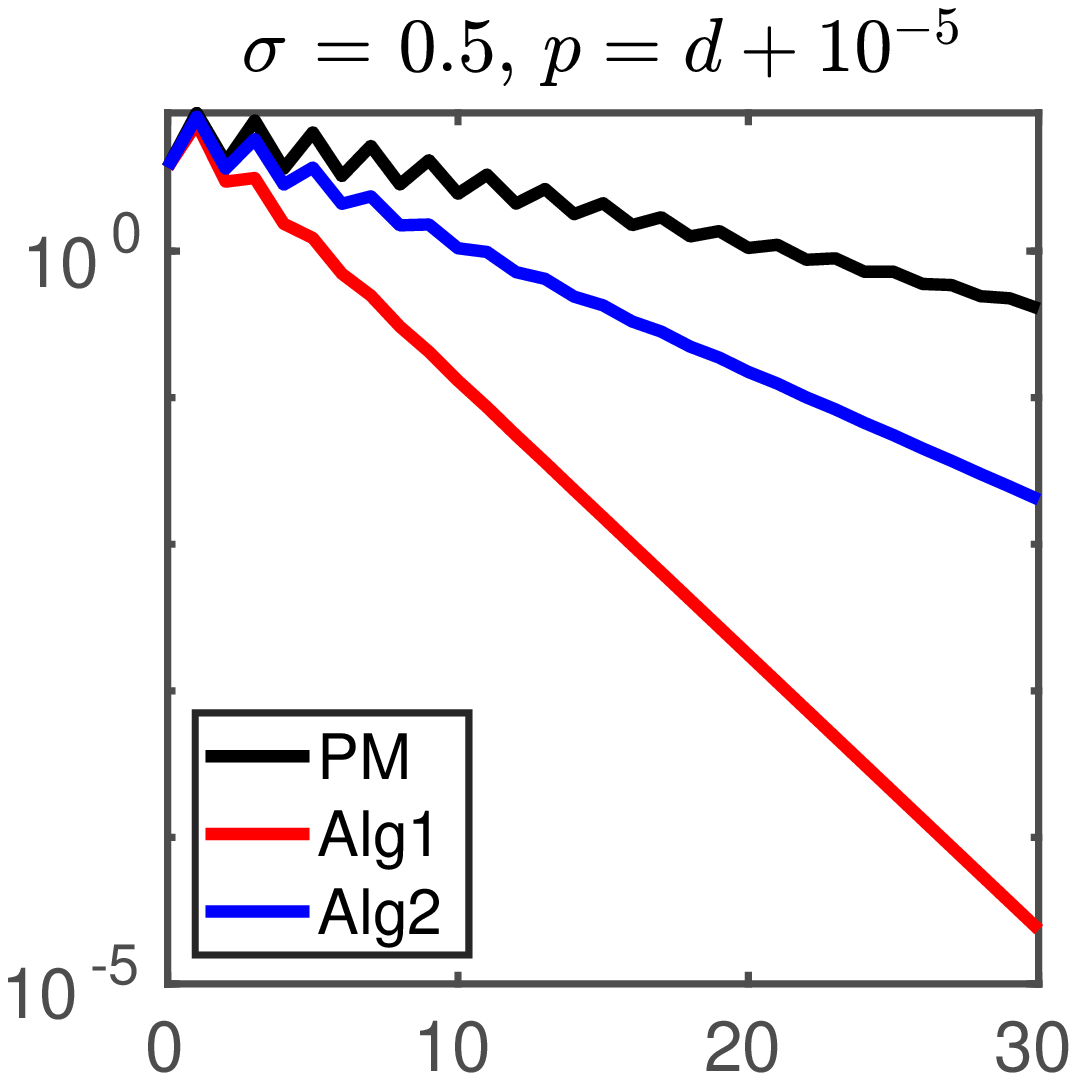}\\[1em]
	\includegraphics[width=.35\textwidth]{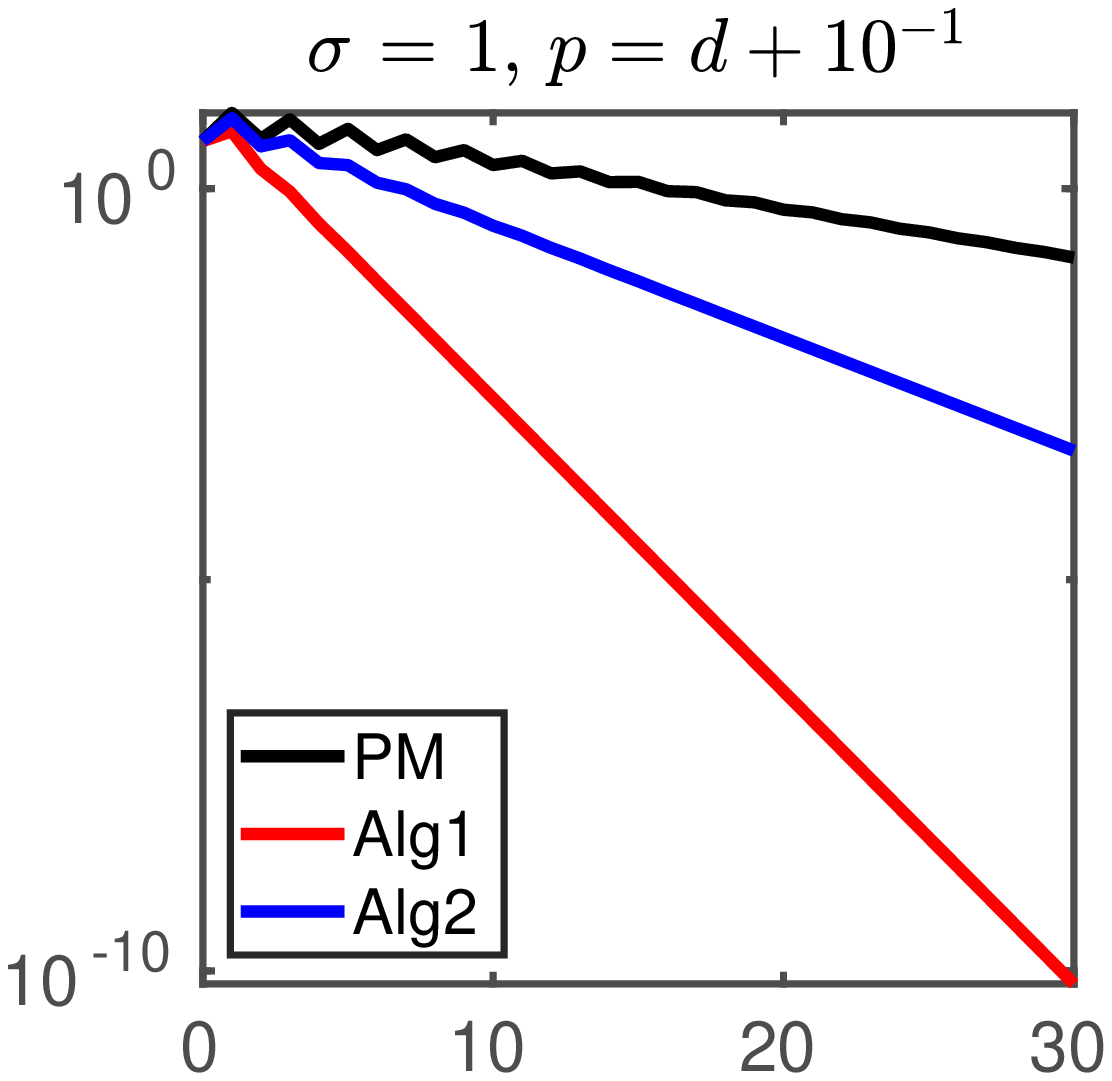}\quad
	\includegraphics[width=.35\textwidth]{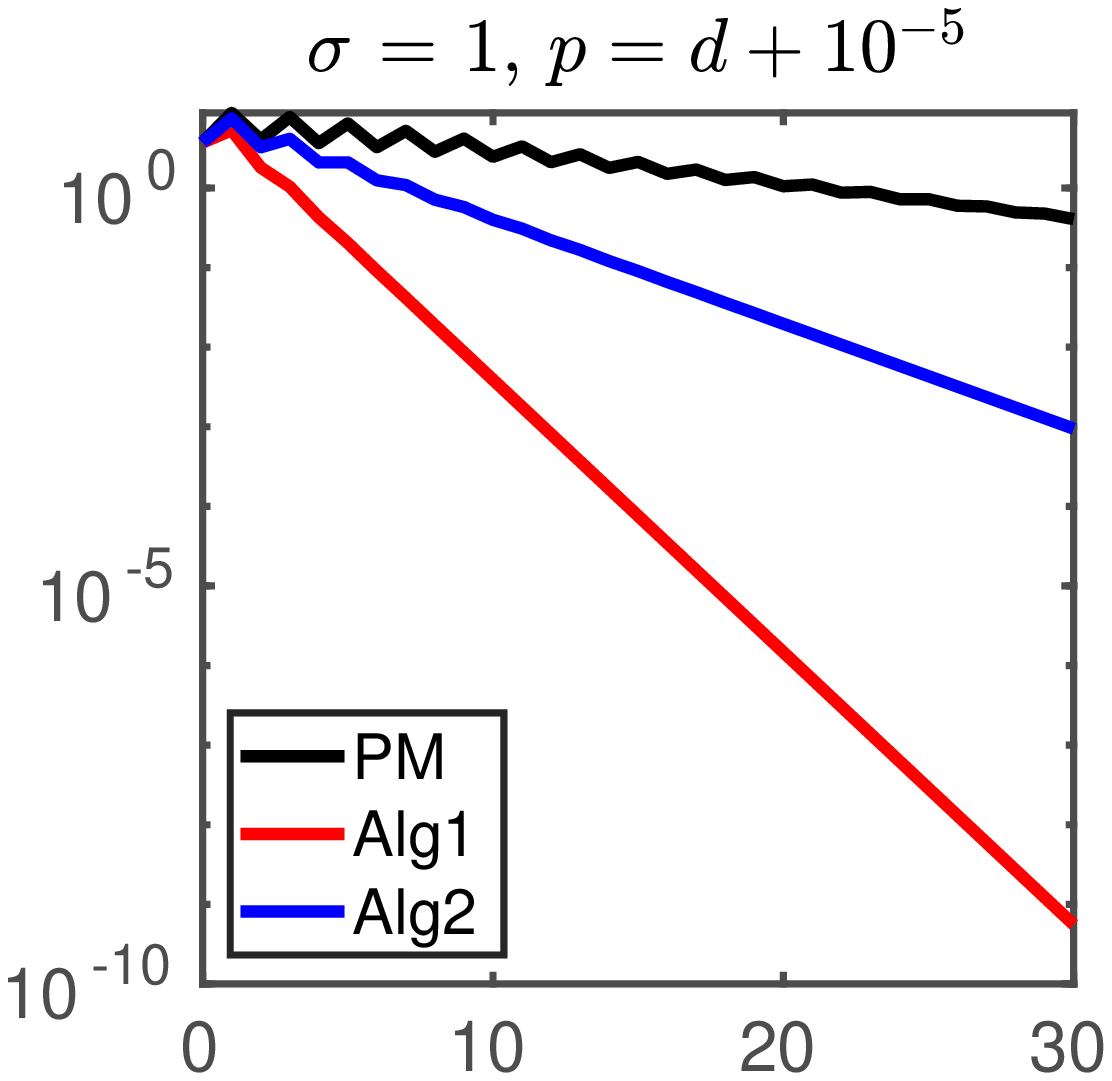}
\vspace{-0.3cm}
\caption{Experiments on the test problem $\mathbf A \in \RR^{n\times n\times n}$, $n=100$, considered in \cite{zhang2012linear} and defined in \eqref{eq:tensorsABC}.} \label{fig:tensorsA}

\vspace{0.3cm}
	\centering
	\includegraphics[width=.35\textwidth]{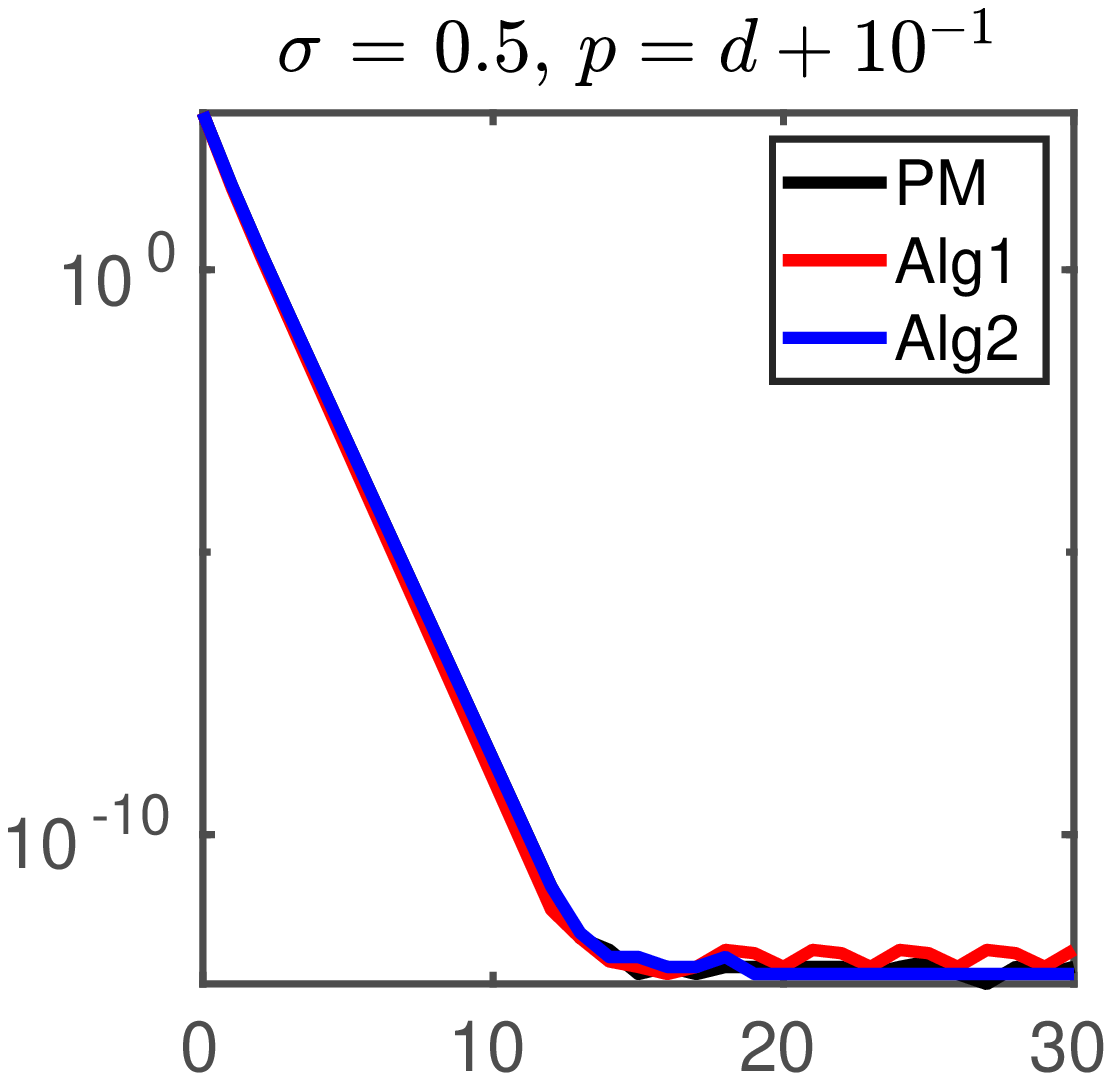}\quad
	\includegraphics[width=.35\textwidth]{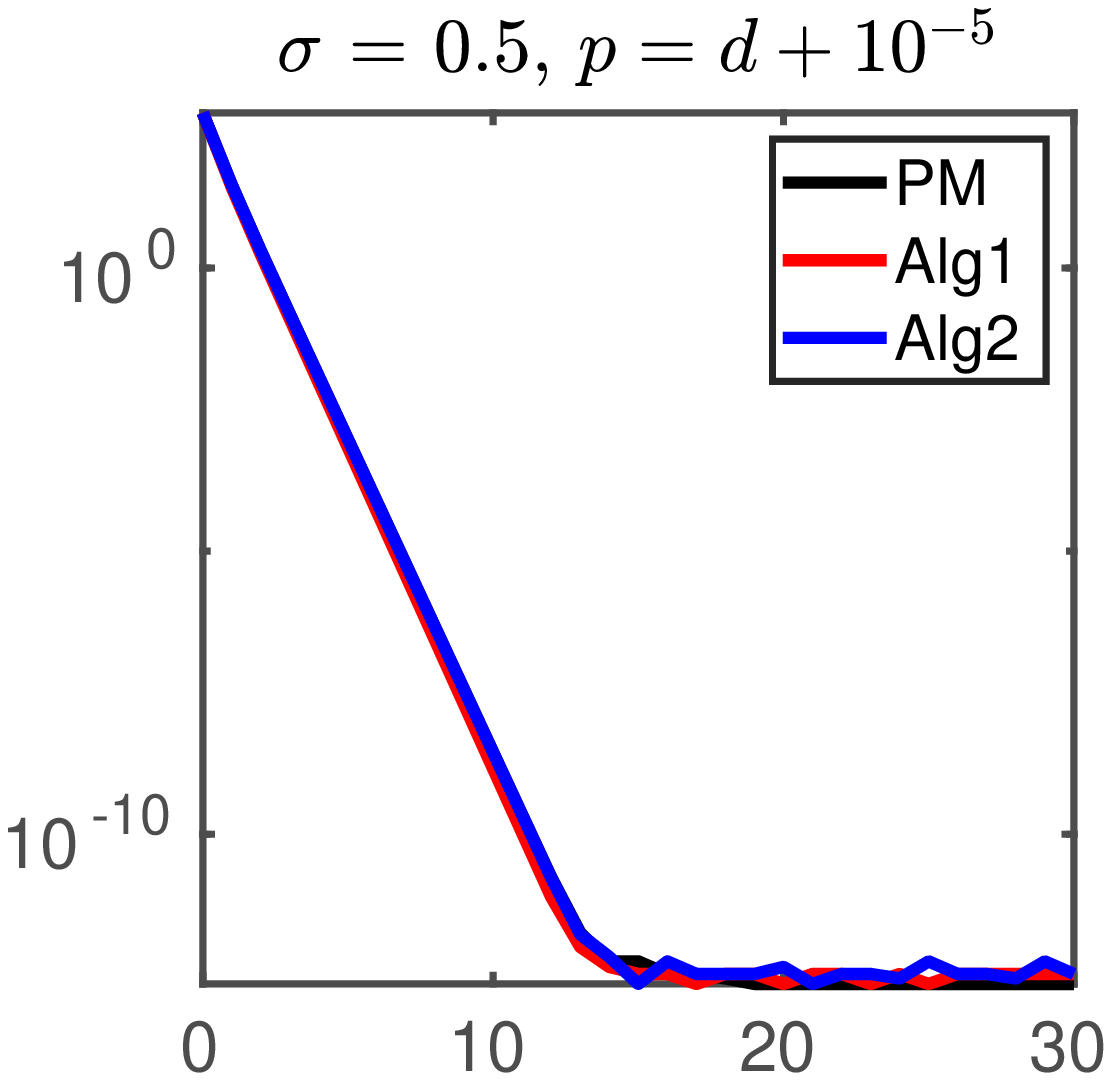}\\[0.3em]
	\includegraphics[width=.35\textwidth]{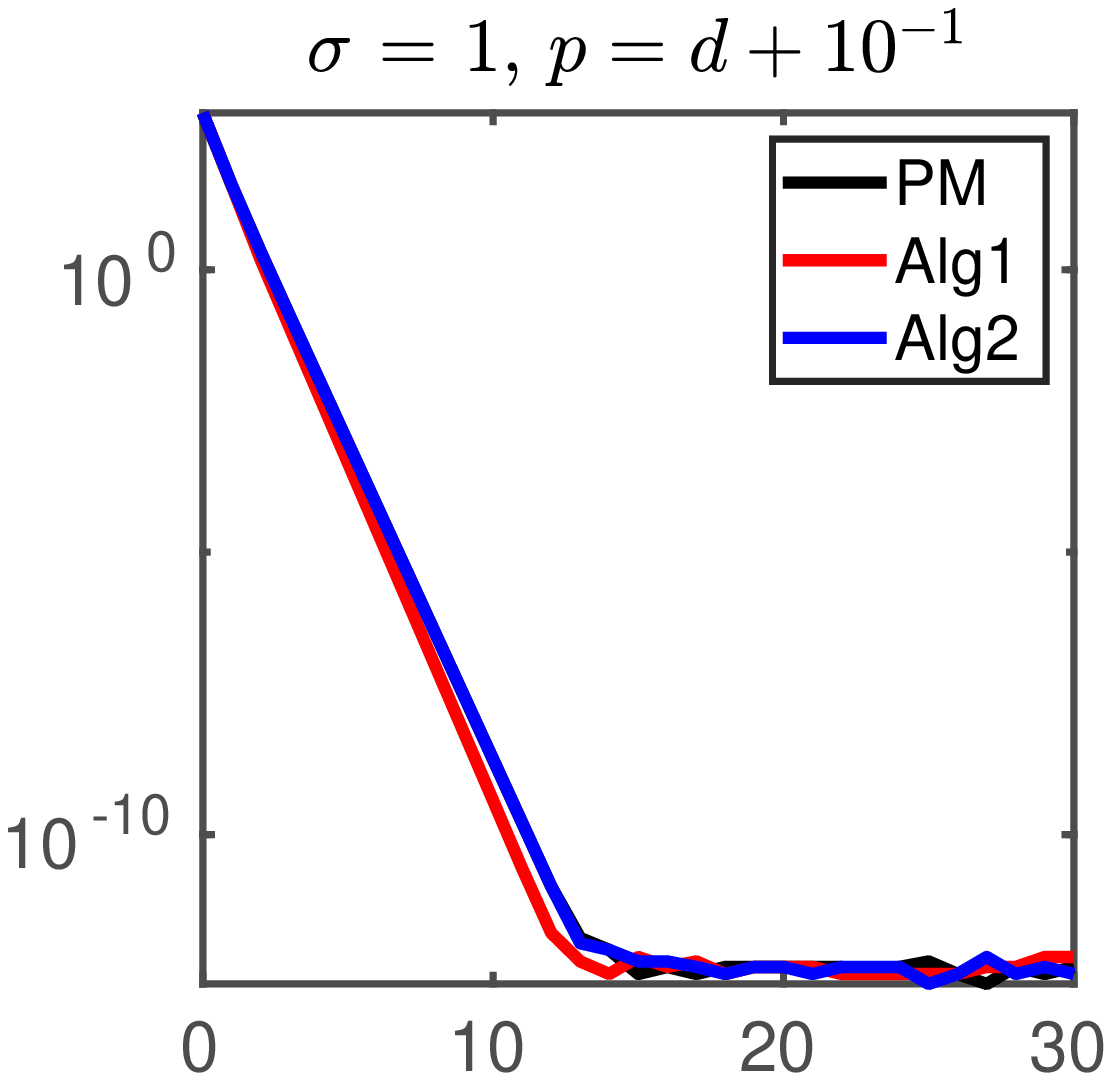}\quad
	\includegraphics[width=.35\textwidth]{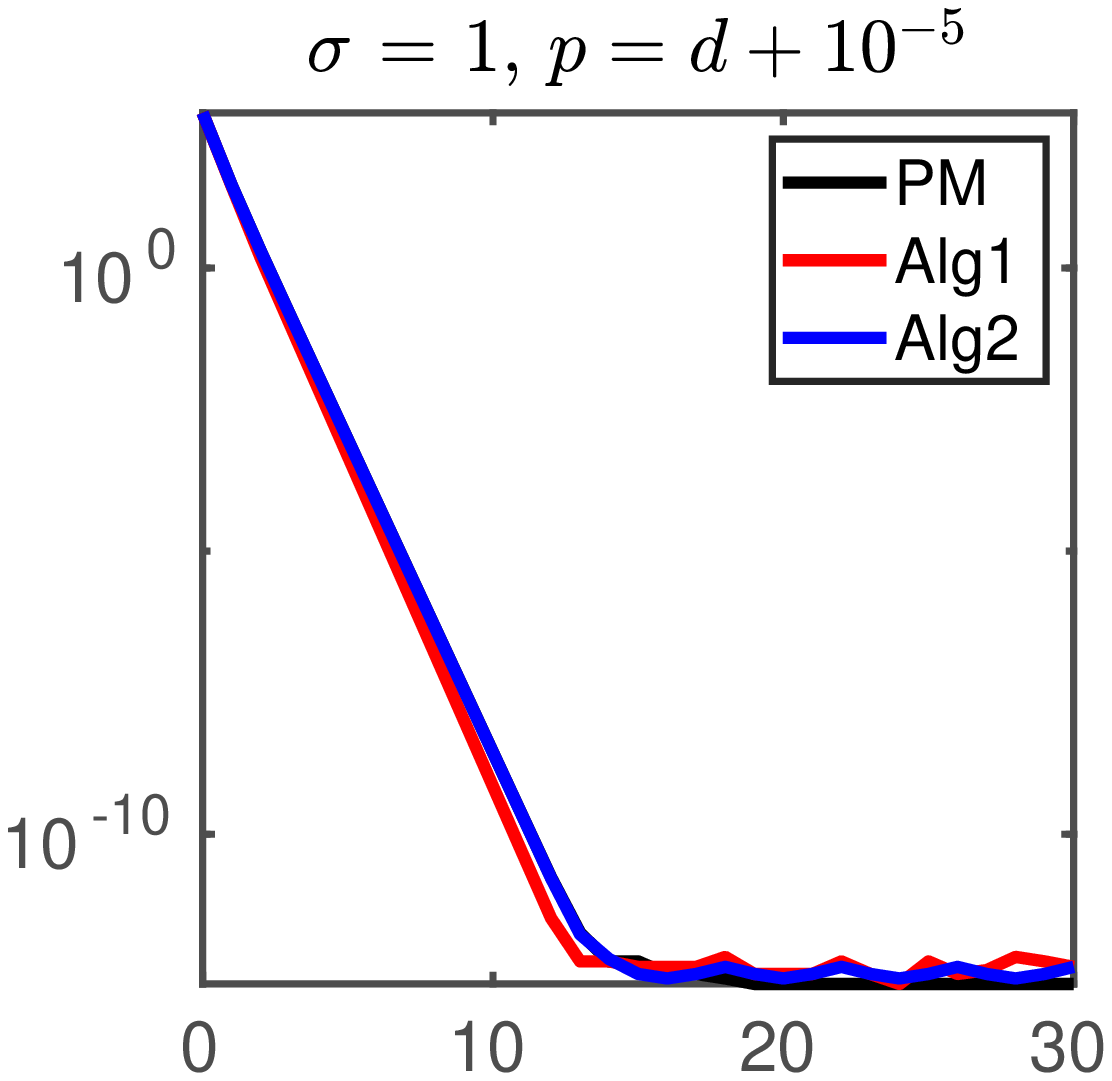}
	\vspace{-0.3cm}
	\caption{Experiments on the test problem $\mathbf B \in \RR^{n\times n\times n}$, $n=100$, considered in \cite{zhang2012linear} and defined in \eqref{eq:tensorsABC}.} \label{fig:tensorsB}
\end{figure}
On this data set our analysis shows that Algorithm \ref{alg:shiftedPM1} often outperforms the LZI algorithm \cite{liu2010always,zhang2012linear}, when addressing {the problem of approximating} $H$-eigenpairs, i.e. for $p\approx d$. In fact, as figures \ref{fig:tensorsA}, \ref{fig:tensorsB} and \ref{fig:tensorsC} show, the point-wise residual  of Algorithm \ref{alg:shiftedPM1} always converges to zero faster than the one of Algorithm~\ref{alg:shiftedPM2}.
\begin{figure}[p]
	\centering
	\includegraphics[width=.35\textwidth]{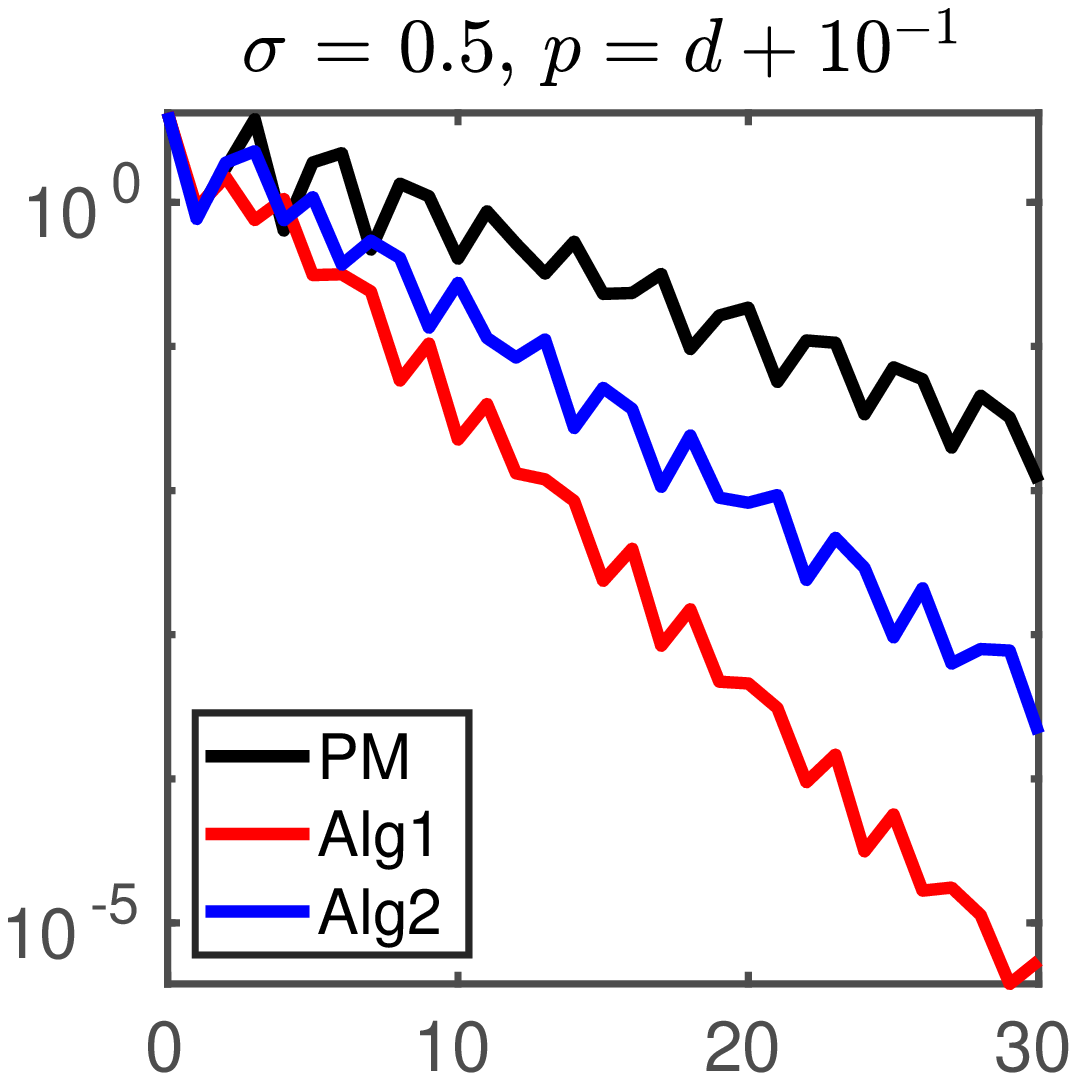}\quad
	\includegraphics[width=.35\textwidth]{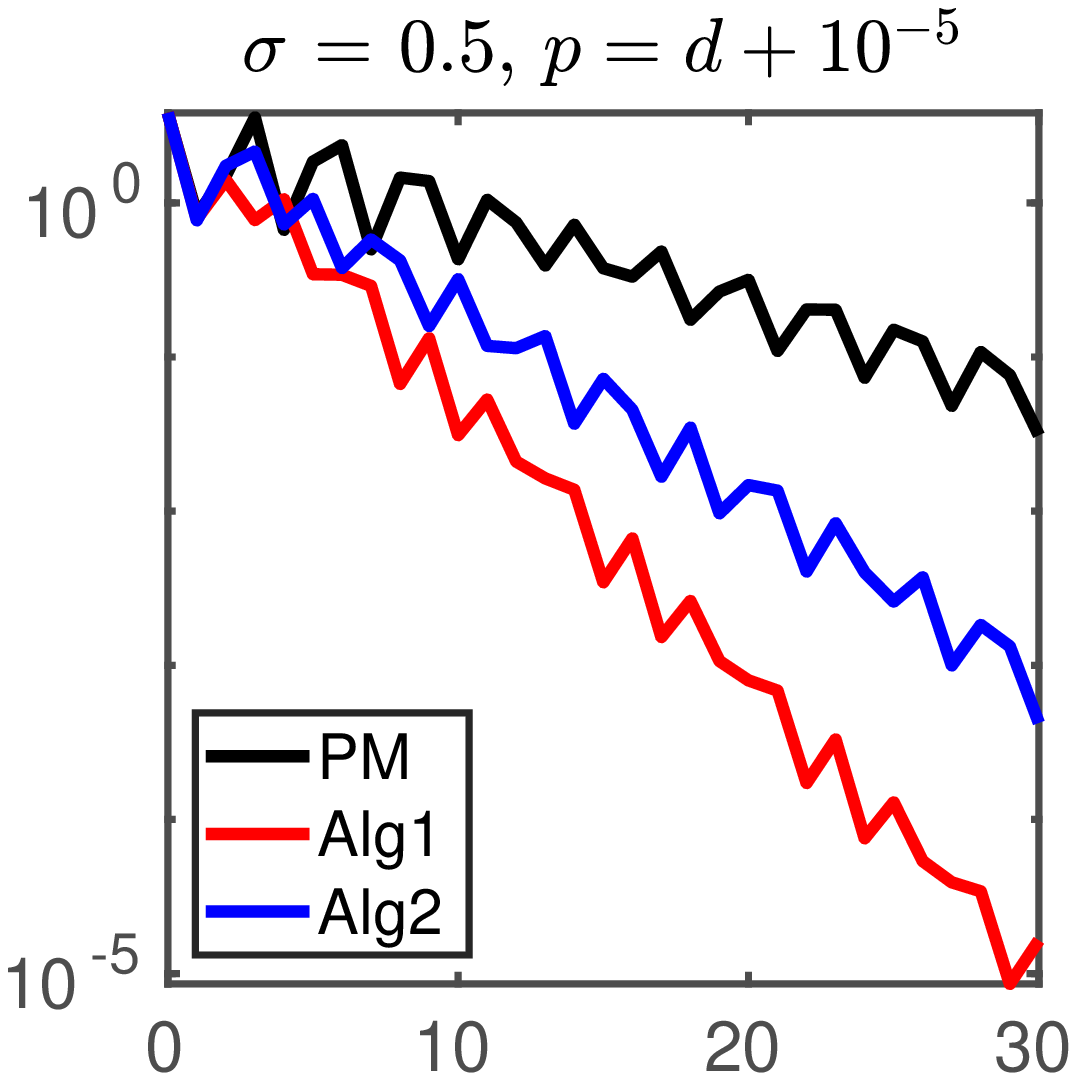}\\[0.3em]
	\includegraphics[width=.35\textwidth]{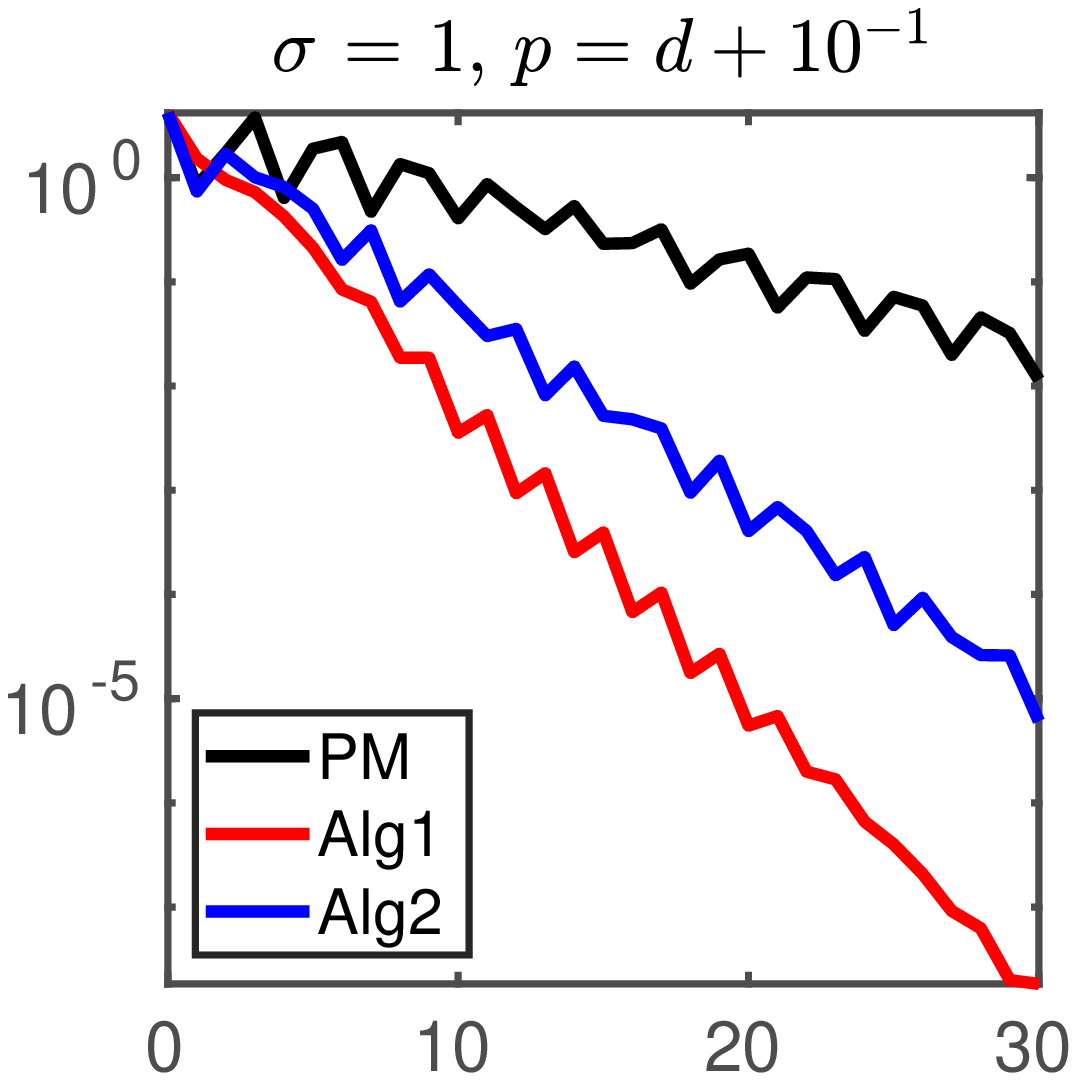}\quad
	\includegraphics[width=.35\textwidth]{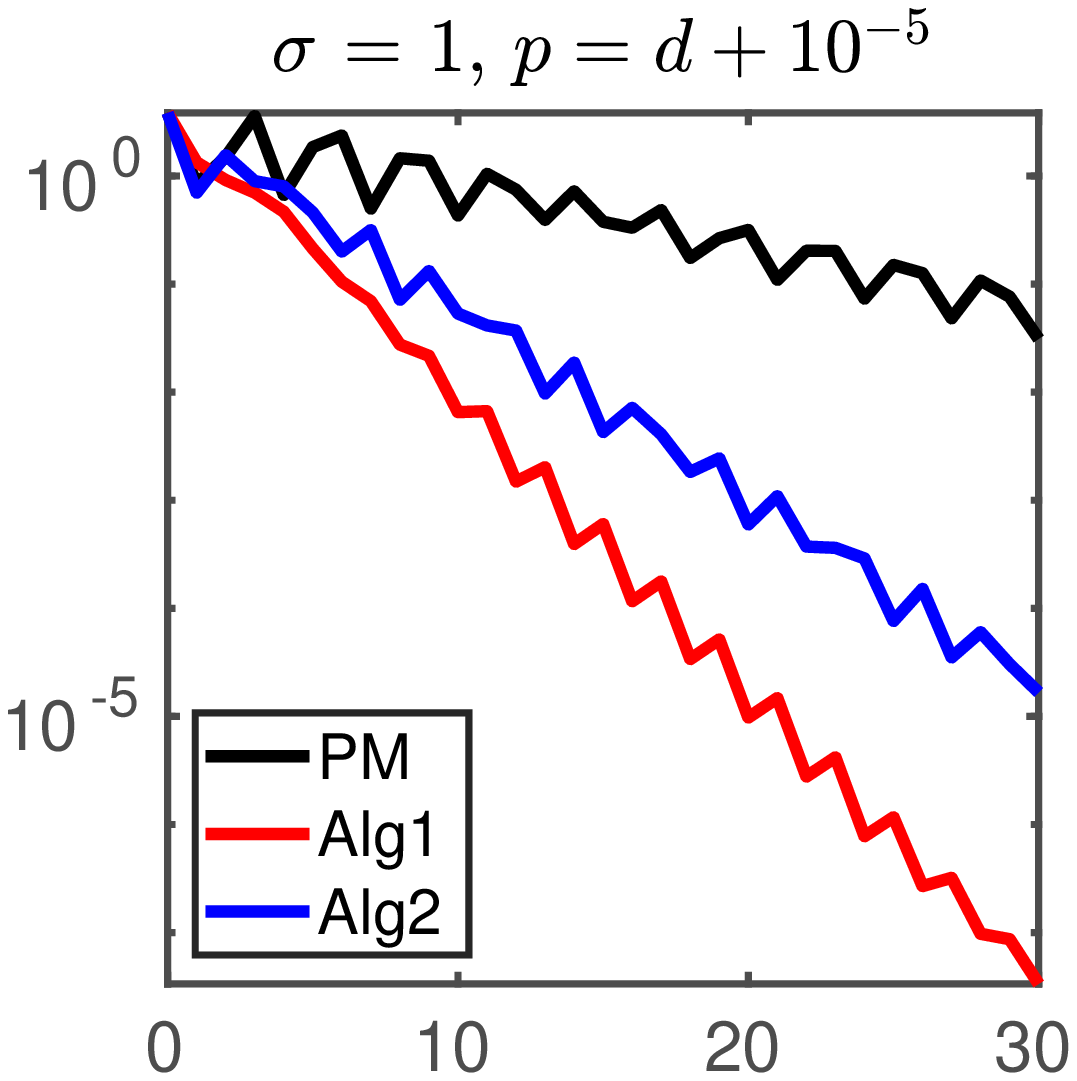}
	\vspace{-0.3cm}
	\caption{Experiments on the test problem $\mathbf C \in \RR^{n\times n\times n}$, $n=100$, considered in \cite{zhang2012linear} and defined in \eqref{eq:tensorsABC}.} \label{fig:tensorsC}

\vspace{0.3cm}
	\centering
	\includegraphics[width=.35\textwidth]{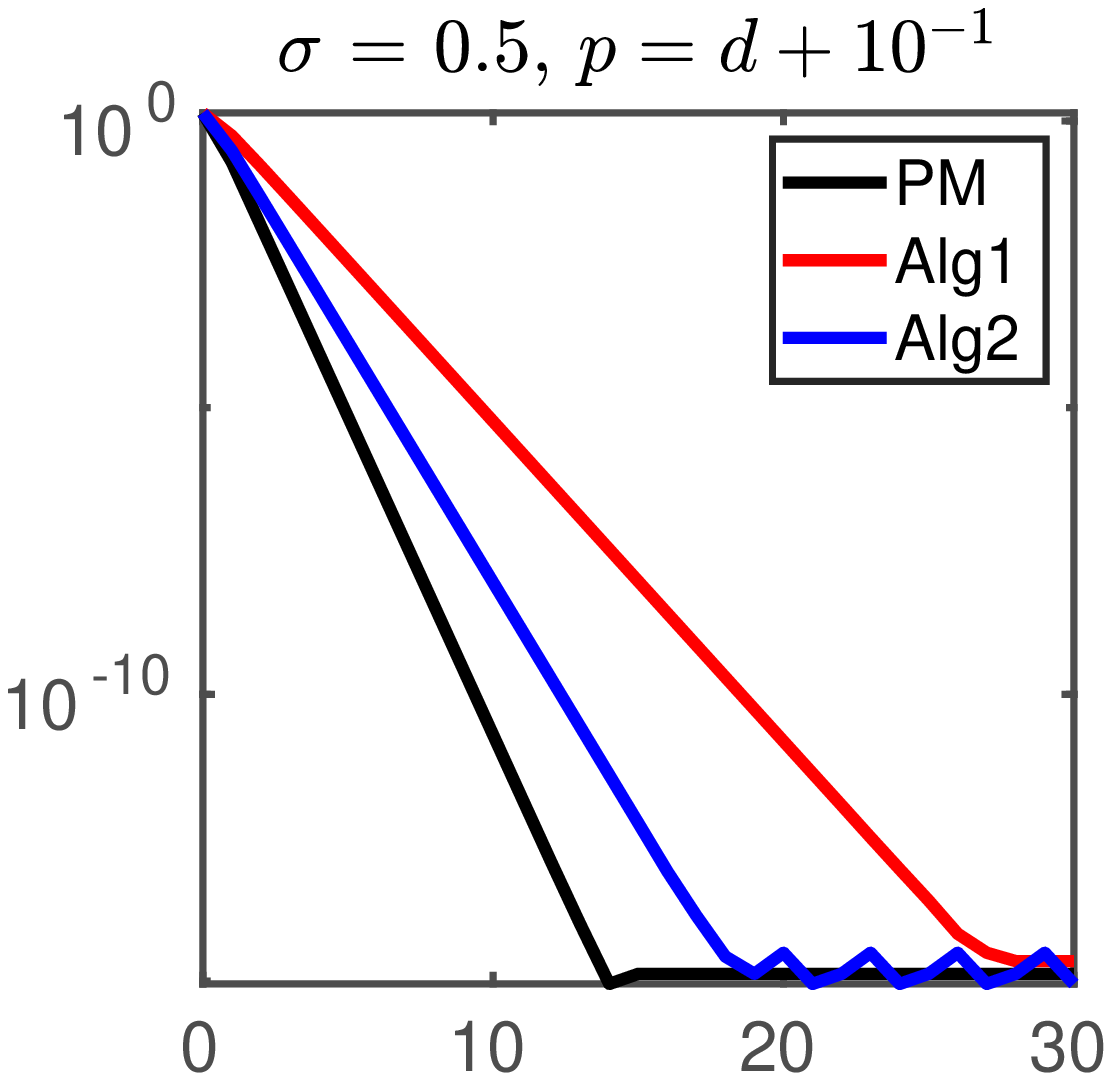}
	\includegraphics[width=.35\textwidth]{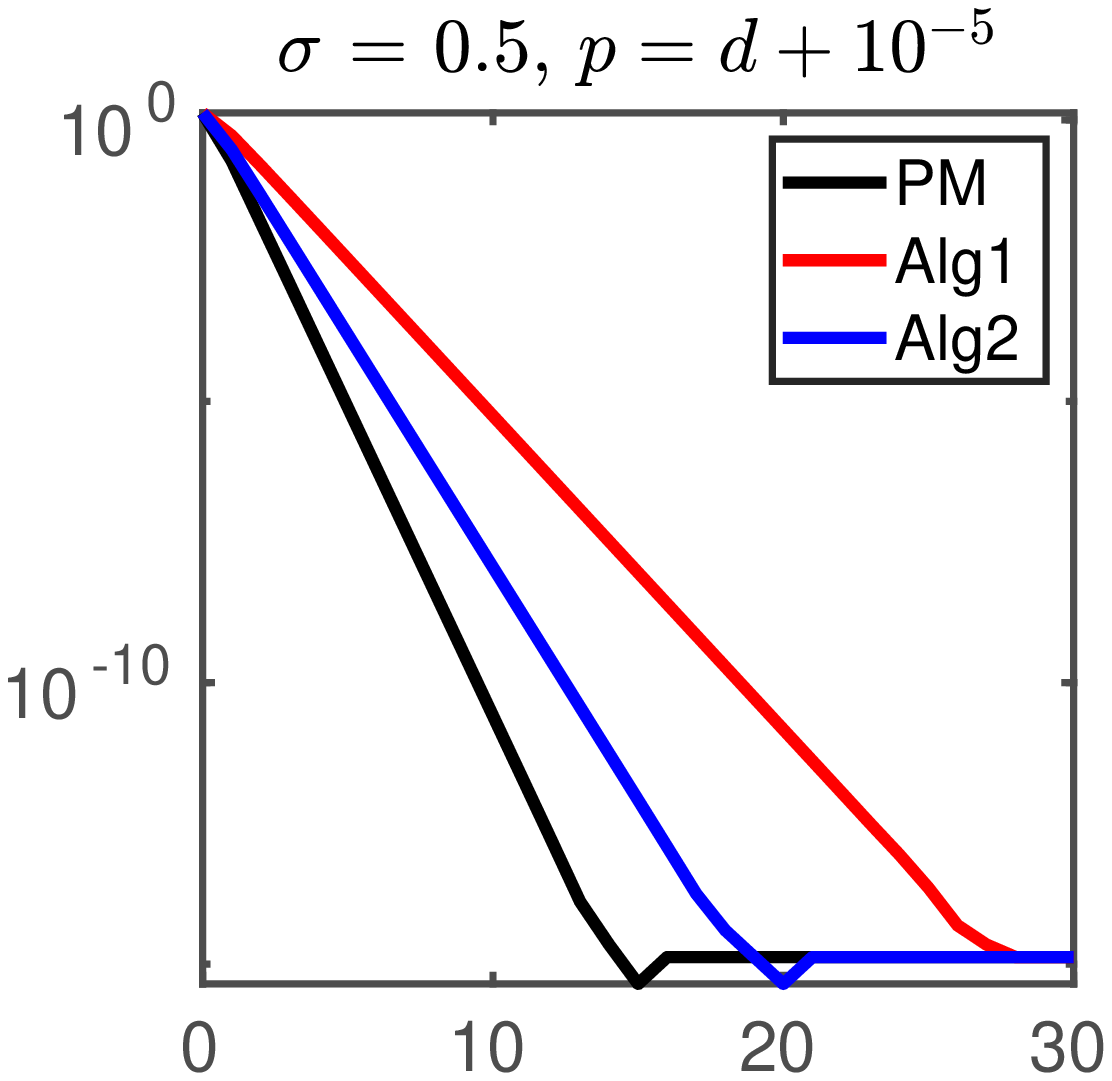}\\
	\includegraphics[width=.35\textwidth]{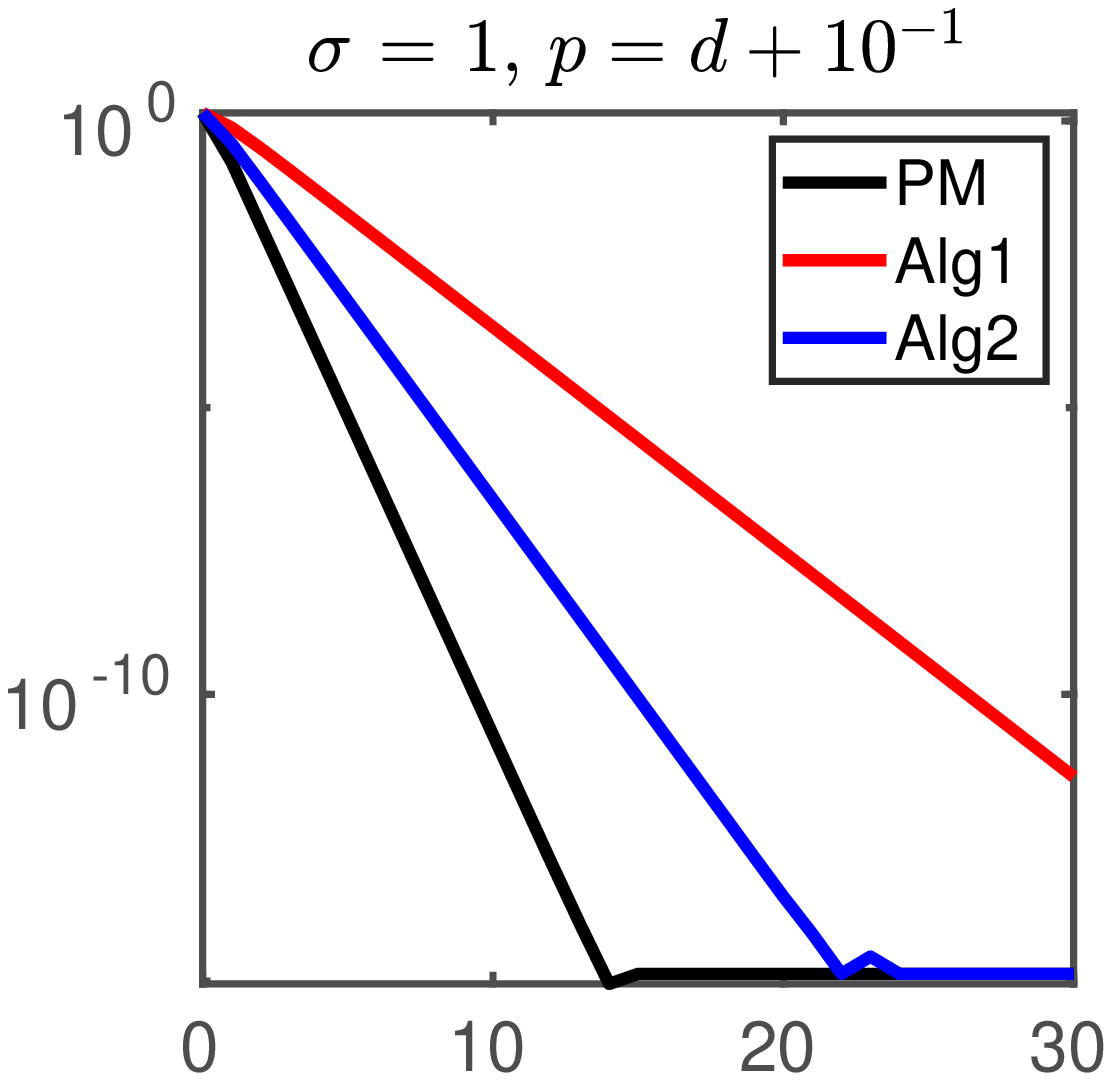}
	\includegraphics[width=.35\textwidth]{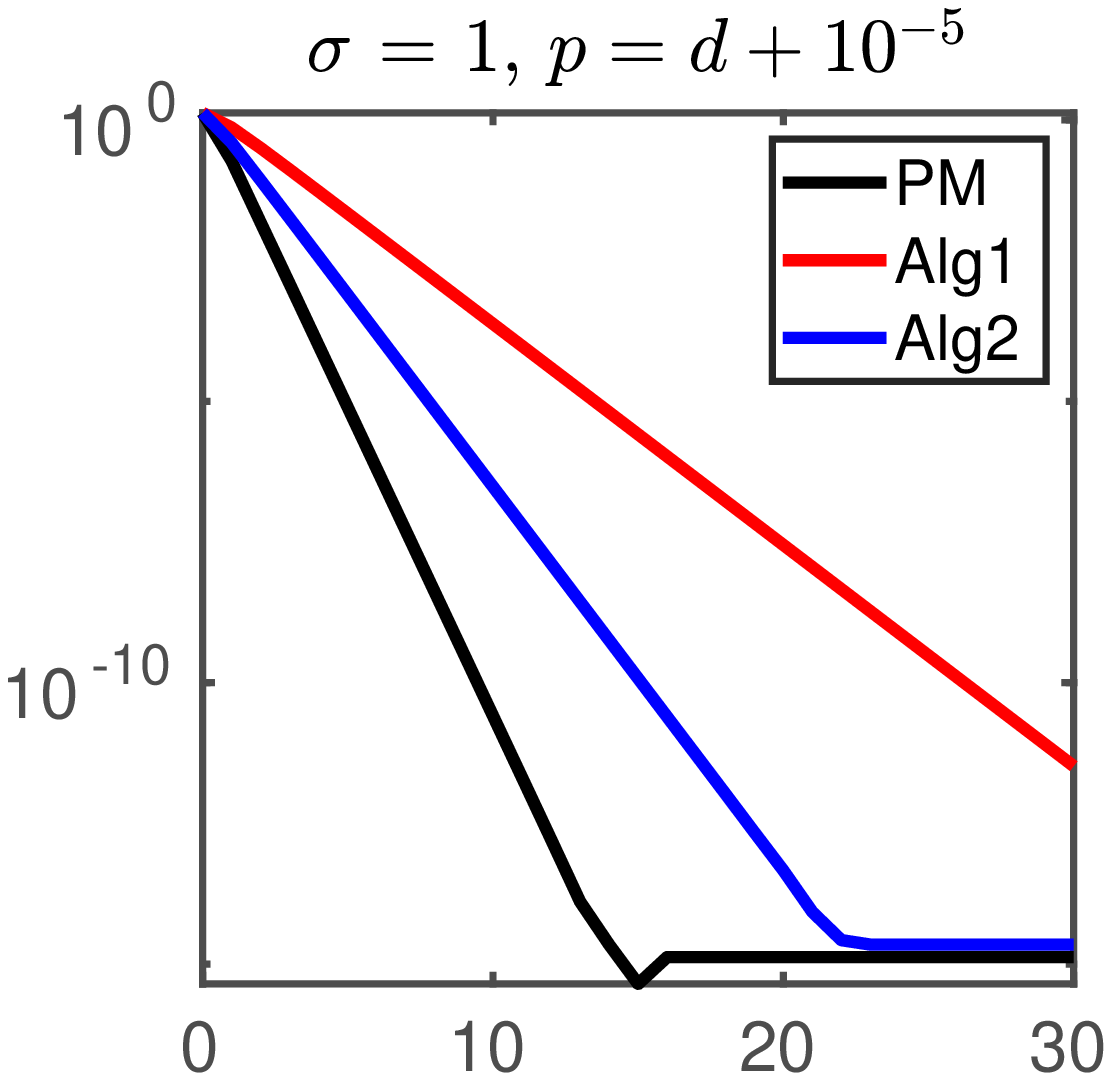}
\vspace{-0.3cm}
	\caption{Experiments on the test problem $\mathbf{K}\in\RR^{n\times n\times n\times n}$. } \label{fig:kR}
\end{figure}

\newcolumntype{C}{ >{\centering\arraybackslash} m{6cm} }
\newcolumntype{D}{ >{\centering\arraybackslash} m{.5cm} }

\subsection{A real symmetric tensor: Kofidis and Regalia example}
One of the first appearances of the power method for tensor eigenvalues was related to $Z$-eigenvalues, that is the ``Euclidean'' case $p=2$.  The symmetric higher-order power method for $Z$-eigenpairs have been introduced in \cite{de2000best}. Afterwards, Kofidis and Regalia \cite{kofidis2002best} note that the convergence of that method is guaranteed only if certain not necessarily mild conditions are met. Moreover, they provide a particularly bad-behaving example tensor $\mathbf K$ where the power iteration fails to converge. We recall that example tensor below  \cite[Ex.\ 1]{kofidis2002best}. Consider the tensor $\tilde{\mathbf K}$ with nonzero entries defined by
$$
\tilde{\mathbf K} = \left\{\begin{array}{llll}
\kappa_{1 1 1 1} = 0.2883  &
\kappa_{ 1 1 1 2 } = -0.0031  &
\kappa_{ 1 1 1 3 } = 0.1973  &
\kappa_{ 1 1 2 2 } = -0.2485 \\
\kappa_{ 1 1 2 3 } = -0.2939  &
\kappa_{ 1 1 3 3 } = 0.3847 &
\kappa_{ 1 2 2 2 } = 0.2972 &
\kappa_{ 1 2 2 3 } = 0.1862 \\
\kappa_{ 1 2 3 3 } = 0.0919 &
\kappa_{ 1 3 3 3 } = -0.3619 &
\kappa_{ 2 2 2 2 } = 0.1241 &
\kappa_{ 2 2 2 3 } = -0.3420 \\
\kappa_{ 2 2 3 3 } = 0.2127 &
\kappa_{ 2 3 3 3 } = 0.2727 &
\kappa_{ 3 3 3 3 } = -0.3054 &
\end{array}\right\} \, .
$$
The tensor $\mathbf K$ is obtained from $\tilde {\mathbf K}$ by symmetrizing it
with respect to any permutation of the indices $i,j,k,\ell$.

Later on, a shifted symmetric higher-order power method (SS-HOPM) for $Z$-eigenpairs is proposed in \cite{kolda2011shifted,regalia2003monotonic}. Clearly, when $p=2$, the proposed Algorithms \ref{alg:shiftedPM1} and \ref{alg:shiftedPM2} both coincide with the SS-HOPM.  In the following Figure \ref{fig:kR} we analyze the behavior of Algorithms \ref{alg:shiftedPM1} and \ref{alg:shiftedPM2} and of power method on the {element-wise absolute value of the Kofidis and Regalia tensor, i.e.,  $\mathbf{T}:=|\mathbf K|=(|\kappa_{ijk\ell}|)\in \RR^{3\times 3 \times 3 \times 3}$.} In particular, plots in Figure \ref{fig:kR} show the residual $\|{T}(\b x_k)-\lambda_k\Phi_p(\b x_k)\|_\infty$ from the {element-wise absolute value of the} same starting point $\b x_0 = (-0.2695, 0.1972, 0.3270)$ proposed in \cite{kofidis2002best,kolda2011shifted}.

\begin{figure}[tb]
	\centering
	\includegraphics[width=.35\textwidth]{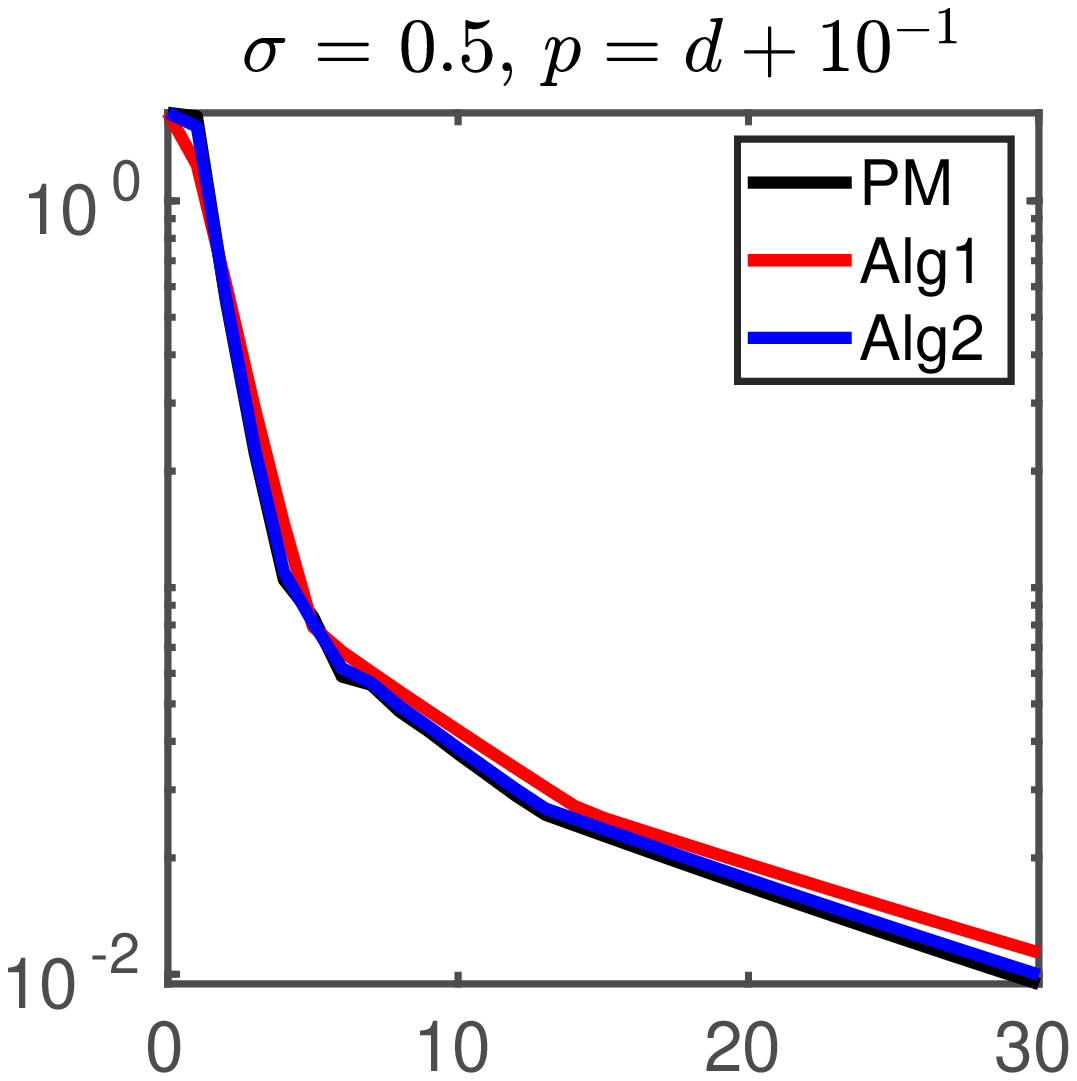}
	\includegraphics[width=.35\textwidth]{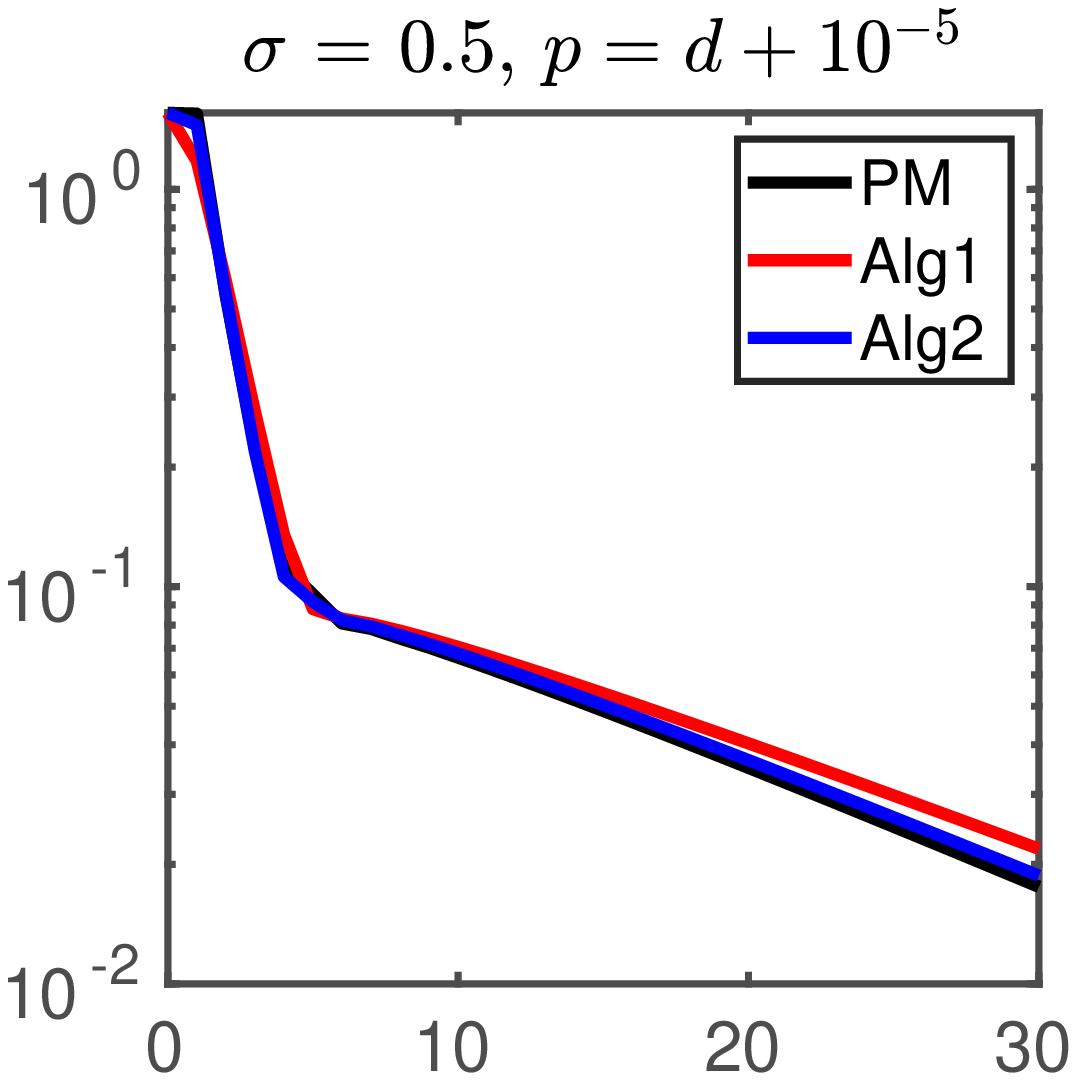}\\
	\includegraphics[width=.35\textwidth]{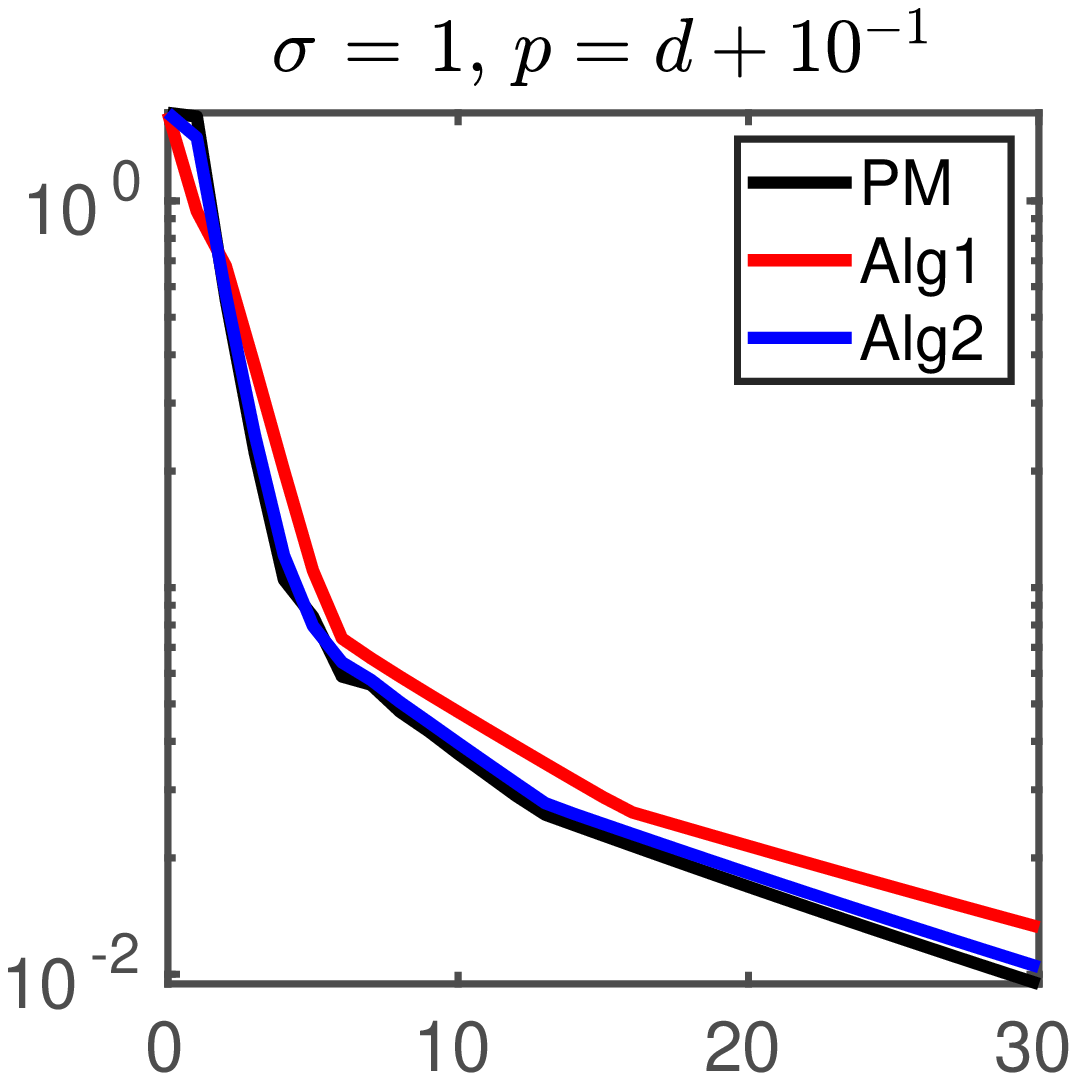}
	\includegraphics[width=.35\textwidth]{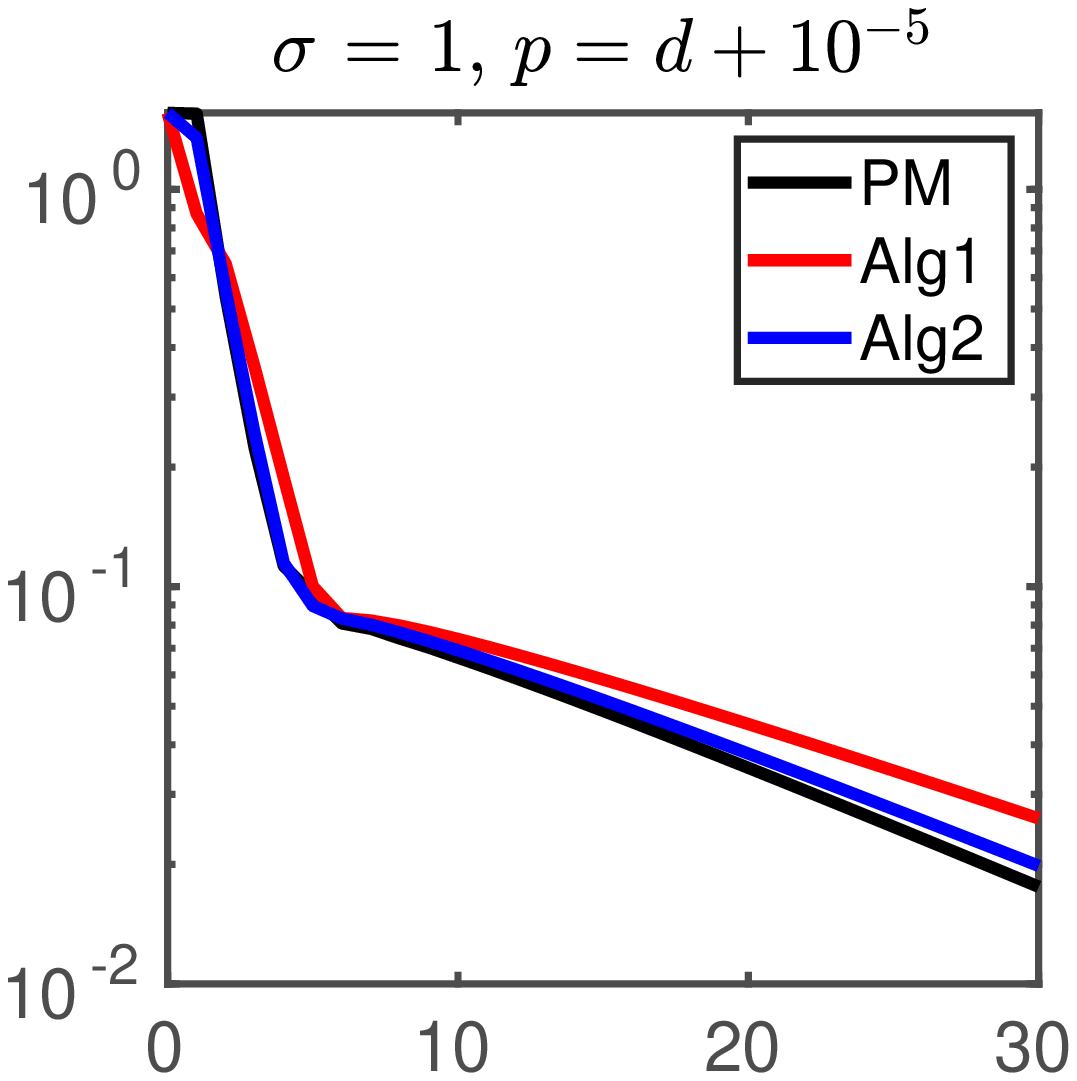}\\
	\caption{Experiments on $\b T\in\RR^{n\times n\times n}, \; n=62$, generated using \texttt{Dolphins}.}\label{fig:tcycle1}
\end{figure}

\subsection{Real world network data tensors} \label{sec:real_world_data_tensors}
Higher-order network analysis is recently gaining an increasing amount of attention since a large number of real-world complex networks shows higher-order features and a higher-organization \cite{benson2016higher}. In this context one often describes higher-order network data via a nonnegative tensor and then implements an analysis based on the eigen or singular vectors in order to compute, for example,  importance scores for network components, see for example \cite{arrigo2019multi,benson2019three,li2012har,arrigo2017centrality}. To test the performances of our methods in this context, here we consider a  network modeled by the graph $G=(V,E)$, with $V =\{1,\dots,n\}$ being the set of nodes and $E$ the set of edges between the nodes. Then, we build the third order three-cycle tensor {$\b T=(t_{ijk})$} with entries
\begin{equation*}
{t_{ijk}}= \begin{cases}
1, \hbox{ if there is a three-cycle between nodes } i,j,k \\
0, \hbox{ otherwise.}
\end{cases}\,
\end{equation*}
where a three-cycle between $i,j,k$ is any closed sequence of edges involving those three nodes. While there is only one possible type of three-cycle in an undirected network, a triangle between $i,j$ and $k$, in the directed case there are up to 7 different types of three-cycle, as we show in the illustration below

\begin{center}
\def\colorenodi{white} 
\begin{tikzpicture}[
{<[scale=1.5]}-{>[scale=1.5]},
thick,
main node/.style={circle, fill=\colorenodi, draw=black, align=center,inner sep=1}
]
\newcommand*{\MainNum}{3}
\newcommand*{\MainRadius}{.5cm}
\newcommand*{\MainStartAngle}{90}

\path
(-.5, 0) coordinate (M)
\foreach \t [count=\i] in {$i$,$j$,$k$}  {
	+({\i-1)*360/\MainNum + \MainStartAngle}:\MainRadius)
	node[main node, align=center] (p\i) {\t}
}
;

\foreach \i in {1, ..., \MainNum} {
	\pgfextracty{\dimen0 }{\pgfpointanchor{p\i}{north}}
	\pgfextracty{\dimen2 }{\pgfpointanchor{p\i}{center}}
	\dimen0=\dimexpr\dimen2 - \dimen0\relax
	\ifdim\dimen0<0pt \dimen0 = -\dimen0 \fi
	\pgfmathparse{2*asin(\the\dimen0/\MainRadius/2)}
	\global\expandafter\let\csname p\i-angle\endcsname\pgfmathresult
}

\foreach \i [evaluate=\i as \nexti using {int(mod(\i, \MainNum)+1}] in {1, ..., \MainNum} {
	\pgfmathsetmacro\StartAngle{ (\i-1)*360/\MainNum + \MainStartAngle + \csname p\i-angle\endcsname }
	\pgfmathsetmacro\EndAngle{ (\nexti-1)*360/\MainNum + \MainStartAngle - \csname p\nexti-angle\endcsname }
	\ifdim\EndAngle pt < \StartAngle pt
	\pgfmathsetmacro\EndAngle{\EndAngle + 360}
	\fi
	\draw[<->]  (M) ++(\StartAngle:\MainRadius) arc[start angle=\StartAngle, end angle=\EndAngle, radius=\MainRadius];
}
\end{tikzpicture}
\hfill
\begin{tikzpicture}[
{<[scale=1.5]}-{>[scale=1.5]},
thick,
main node/.style={circle, fill=\colorenodi, draw=black, align=center,inner sep=1}
]
\newcommand*{\MainNum}{3}
\newcommand*{\MainRadius}{.5cm}
\newcommand*{\MainStartAngle}{90}

\path
(-.5, 0) coordinate (M)
\foreach \t [count=\i] in {$i$,$j$,$k$}  {
	+({\i-1)*360/\MainNum + \MainStartAngle}:\MainRadius)
	node[main node, align=center] (p\i) {\t}
}
;

\foreach \i in {1, ..., \MainNum} {
	\pgfextracty{\dimen0 }{\pgfpointanchor{p\i}{north}}
	\pgfextracty{\dimen2 }{\pgfpointanchor{p\i}{center}}
	\dimen0=\dimexpr\dimen2 - \dimen0\relax
	\ifdim\dimen0<0pt \dimen0 = -\dimen0 \fi
	\pgfmathparse{2*asin(\the\dimen0/\MainRadius/2)}
	\global\expandafter\let\csname p\i-angle\endcsname\pgfmathresult
}

\foreach \i [evaluate=\i as \nexti using {int(mod(\i, \MainNum)+1}] in {1, ..., \MainNum} {
	\pgfmathsetmacro\StartAngle{ (\i-1)*360/\MainNum + \MainStartAngle + \csname p\i-angle\endcsname }
	\pgfmathsetmacro\EndAngle{ (\nexti-1)*360/\MainNum + \MainStartAngle - \csname p\nexti-angle\endcsname }
	\ifdim\EndAngle pt < \StartAngle pt
	\pgfmathsetmacro\EndAngle{\EndAngle + 360}
	\fi
	\draw[->]  (M) ++(\StartAngle:\MainRadius) arc[start angle=\StartAngle, end angle=\EndAngle, radius=\MainRadius];
}
\end{tikzpicture}
\hfill
\begin{tikzpicture}[
{<[scale=1.5]}-{>[scale=1.5]},
thick,
main node/.style={circle, fill=\colorenodi, draw=black, align=center,inner sep=1}
]
\newcommand*{\MainNum}{3}
\newcommand*{\MainRadius}{.5cm}
\newcommand*{\MainStartAngle}{90}

\path
(-.5, 0) coordinate (M)
\foreach \t [count=\i] in {$i$,$j$,$k$}  {
	+({\i-1)*360/\MainNum + \MainStartAngle}:\MainRadius)
	node[main node, align=center] (p\i) {\t}
}
;

\foreach \i in {1, ..., \MainNum} {
	\pgfextracty{\dimen0 }{\pgfpointanchor{p\i}{north}}
	\pgfextracty{\dimen2 }{\pgfpointanchor{p\i}{center}}
	\dimen0=\dimexpr\dimen2 - \dimen0\relax
	\ifdim\dimen0<0pt \dimen0 = -\dimen0 \fi
	\pgfmathparse{2*asin(\the\dimen0/\MainRadius/2)}
	\global\expandafter\let\csname p\i-angle\endcsname\pgfmathresult
}

\pgfmathsetmacro\StartAngle{ 0 + \MainStartAngle + \csname p1-angle\endcsname }
\pgfmathsetmacro\EndAngle{  360/\MainNum + \MainStartAngle - \csname p2-angle\endcsname }
\draw[->]  (M) ++(\StartAngle:\MainRadius) arc[start angle=\StartAngle, end angle=\EndAngle, radius=\MainRadius];

\pgfmathsetmacro\StartAngle{ 360/\MainNum + \MainStartAngle + \csname p2-angle\endcsname }
\pgfmathsetmacro\EndAngle{  2*360/\MainNum + \MainStartAngle - \csname p3-angle\endcsname }
\draw[<-]  (M) ++(\StartAngle:\MainRadius) arc[start angle=\StartAngle, end angle=\EndAngle, radius=\MainRadius];

\pgfmathsetmacro\StartAngle{ 2*360/\MainNum + \MainStartAngle + \csname p3-angle\endcsname }
\pgfmathsetmacro\EndAngle{  360 + \MainStartAngle - \csname p1-angle\endcsname }
\draw[<-]  (M) ++(\StartAngle:\MainRadius) arc[start angle=\StartAngle, end angle=\EndAngle, radius=\MainRadius];
\end{tikzpicture}
\hfill
\begin{tikzpicture}[
{<[scale=1.5]}-{>[scale=1.5]},
thick,
main node/.style={circle, fill=\colorenodi, draw=black, align=center,inner sep=1}
]
\newcommand*{\MainNum}{3}
\newcommand*{\MainRadius}{.5cm}
\newcommand*{\MainStartAngle}{90}

\path
(-.5, 0) coordinate (M)
\foreach \t [count=\i] in {$i$,$j$,$k$}  {
	+({\i-1)*360/\MainNum + \MainStartAngle}:\MainRadius)
	node[main node, align=center] (p\i) {\t}
}
;

\foreach \i in {1, ..., \MainNum} {
	\pgfextracty{\dimen0 }{\pgfpointanchor{p\i}{north}}
	\pgfextracty{\dimen2 }{\pgfpointanchor{p\i}{center}}
	\dimen0=\dimexpr\dimen2 - \dimen0\relax
	\ifdim\dimen0<0pt \dimen0 = -\dimen0 \fi
	\pgfmathparse{2*asin(\the\dimen0/\MainRadius/2)}
	\global\expandafter\let\csname p\i-angle\endcsname\pgfmathresult
}

\pgfmathsetmacro\StartAngle{ 0 + \MainStartAngle + \csname p1-angle\endcsname }
\pgfmathsetmacro\EndAngle{  360/\MainNum + \MainStartAngle - \csname p2-angle\endcsname }
\draw[->]  (M) ++(\StartAngle:\MainRadius) arc[start angle=\StartAngle, end angle=\EndAngle, radius=\MainRadius];

\pgfmathsetmacro\StartAngle{ 360/\MainNum + \MainStartAngle + \csname p2-angle\endcsname }
\pgfmathsetmacro\EndAngle{  2*360/\MainNum + \MainStartAngle - \csname p3-angle\endcsname }
\draw[<->]  (M) ++(\StartAngle:\MainRadius) arc[start angle=\StartAngle, end angle=\EndAngle, radius=\MainRadius];

\pgfmathsetmacro\StartAngle{ 2*360/\MainNum + \MainStartAngle + \csname p3-angle\endcsname }
\pgfmathsetmacro\EndAngle{  360 + \MainStartAngle - \csname p1-angle\endcsname }
\draw[<-]  (M) ++(\StartAngle:\MainRadius) arc[start angle=\StartAngle, end angle=\EndAngle, radius=\MainRadius];
\end{tikzpicture}
\hfill
\begin{tikzpicture}[
{<[scale=1.5]}-{>[scale=1.5]},
thick,
main node/.style={circle, fill=\colorenodi, draw=black, align=center,inner sep=1}
]
\newcommand*{\MainNum}{3}
\newcommand*{\MainRadius}{.5cm}
\newcommand*{\MainStartAngle}{90}

\path
(-.5, 0) coordinate (M)
\foreach \t [count=\i] in {$i$,$j$,$k$}  {
	+({\i-1)*360/\MainNum + \MainStartAngle}:\MainRadius)
	node[main node, align=center] (p\i) {\t}
}
;

\foreach \i in {1, ..., \MainNum} {
	\pgfextracty{\dimen0 }{\pgfpointanchor{p\i}{north}}
	\pgfextracty{\dimen2 }{\pgfpointanchor{p\i}{center}}
	\dimen0=\dimexpr\dimen2 - \dimen0\relax
	\ifdim\dimen0<0pt \dimen0 = -\dimen0 \fi
	\pgfmathparse{2*asin(\the\dimen0/\MainRadius/2)}
	\global\expandafter\let\csname p\i-angle\endcsname\pgfmathresult
}

\pgfmathsetmacro\StartAngle{ 0 + \MainStartAngle + \csname p1-angle\endcsname }
\pgfmathsetmacro\EndAngle{  360/\MainNum + \MainStartAngle - \csname p2-angle\endcsname }
\draw[<-]  (M) ++(\StartAngle:\MainRadius) arc[start angle=\StartAngle, end angle=\EndAngle, radius=\MainRadius];

\pgfmathsetmacro\StartAngle{ 360/\MainNum + \MainStartAngle + \csname p2-angle\endcsname }
\pgfmathsetmacro\EndAngle{  2*360/\MainNum + \MainStartAngle - \csname p3-angle\endcsname }
\draw[<->]  (M) ++(\StartAngle:\MainRadius) arc[start angle=\StartAngle, end angle=\EndAngle, radius=\MainRadius];

\pgfmathsetmacro\StartAngle{ 2*360/\MainNum + \MainStartAngle + \csname p3-angle\endcsname }
\pgfmathsetmacro\EndAngle{  360 + \MainStartAngle - \csname p1-angle\endcsname }
\draw[->]  (M) ++(\StartAngle:\MainRadius) arc[start angle=\StartAngle, end angle=\EndAngle, radius=\MainRadius];
\end{tikzpicture}
\hfill
\begin{tikzpicture}[
{<[scale=1.5]}-{>[scale=1.5]},
thick,
main node/.style={circle, fill=\colorenodi, draw=black, align=center,inner sep=1}
]
\newcommand*{\MainNum}{3}
\newcommand*{\MainRadius}{.5cm}
\newcommand*{\MainStartAngle}{90}

\path
(-.5, 0) coordinate (M)
\foreach \t [count=\i] in {$i$,$j$,$k$}  {
	+({\i-1)*360/\MainNum + \MainStartAngle}:\MainRadius)
	node[main node, align=center] (p\i) {\t}
}
;

\foreach \i in {1, ..., \MainNum} {
	\pgfextracty{\dimen0 }{\pgfpointanchor{p\i}{north}}
	\pgfextracty{\dimen2 }{\pgfpointanchor{p\i}{center}}
	\dimen0=\dimexpr\dimen2 - \dimen0\relax
	\ifdim\dimen0<0pt \dimen0 = -\dimen0 \fi
	\pgfmathparse{2*asin(\the\dimen0/\MainRadius/2)}
	\global\expandafter\let\csname p\i-angle\endcsname\pgfmathresult
}

\pgfmathsetmacro\StartAngle{ 0 + \MainStartAngle + \csname p1-angle\endcsname }
\pgfmathsetmacro\EndAngle{  360/\MainNum + \MainStartAngle - \csname p2-angle\endcsname }
\draw[->]  (M) ++(\StartAngle:\MainRadius) arc[start angle=\StartAngle, end angle=\EndAngle, radius=\MainRadius];

\pgfmathsetmacro\StartAngle{ 360/\MainNum + \MainStartAngle + \csname p2-angle\endcsname }
\pgfmathsetmacro\EndAngle{  2*360/\MainNum + \MainStartAngle - \csname p3-angle\endcsname }
\draw[<->]  (M) ++(\StartAngle:\MainRadius) arc[start angle=\StartAngle, end angle=\EndAngle, radius=\MainRadius];

\pgfmathsetmacro\StartAngle{ 2*360/\MainNum + \MainStartAngle + \csname p3-angle\endcsname }
\pgfmathsetmacro\EndAngle{  360 + \MainStartAngle - \csname p1-angle\endcsname }
\draw[->]  (M) ++(\StartAngle:\MainRadius) arc[start angle=\StartAngle, end angle=\EndAngle, radius=\MainRadius];
\end{tikzpicture}
\hfill
\begin{tikzpicture}[
{<[scale=1.5]}-{>[scale=1.5]},
thick,
main node/.style={circle, fill=\colorenodi, draw=black, align=center,inner sep=1}
]
\newcommand*{\MainNum}{3}
\newcommand*{\MainRadius}{.5cm}
\newcommand*{\MainStartAngle}{90}

\path
(-.5, 0) coordinate (M)
\foreach \t [count=\i] in {$i$,$j$,$k$}  {
	+({\i-1)*360/\MainNum + \MainStartAngle}:\MainRadius)
	node[main node, align=center] (p\i) {\t}
}
;

\foreach \i in {1, ..., \MainNum} {
	\pgfextracty{\dimen0 }{\pgfpointanchor{p\i}{north}}
	\pgfextracty{\dimen2 }{\pgfpointanchor{p\i}{center}}
	\dimen0=\dimexpr\dimen2 - \dimen0\relax
	\ifdim\dimen0<0pt \dimen0 = -\dimen0 \fi
	\pgfmathparse{2*asin(\the\dimen0/\MainRadius/2)}
	\global\expandafter\let\csname p\i-angle\endcsname\pgfmathresult
}

\pgfmathsetmacro\StartAngle{ 0 + \MainStartAngle + \csname p1-angle\endcsname }
\pgfmathsetmacro\EndAngle{  360/\MainNum + \MainStartAngle - \csname p2-angle\endcsname }
\draw[->]  (M) ++(\StartAngle:\MainRadius) arc[start angle=\StartAngle, end angle=\EndAngle, radius=\MainRadius];

\pgfmathsetmacro\StartAngle{ 360/\MainNum + \MainStartAngle + \csname p2-angle\endcsname }
\pgfmathsetmacro\EndAngle{  2*360/\MainNum + \MainStartAngle - \csname p3-angle\endcsname }
\draw[<->]  (M) ++(\StartAngle:\MainRadius) arc[start angle=\StartAngle, end angle=\EndAngle, radius=\MainRadius];

\pgfmathsetmacro\StartAngle{ 2*360/\MainNum + \MainStartAngle + \csname p3-angle\endcsname }
\pgfmathsetmacro\EndAngle{  360 + \MainStartAngle - \csname p1-angle\endcsname }
\draw[<->]  (M) ++(\StartAngle:\MainRadius) arc[start angle=\StartAngle, end angle=\EndAngle, radius=\MainRadius];
\end{tikzpicture}
\end{center}

\begin{figure}[p]
	\centering
	\includegraphics[width=.35\textwidth]{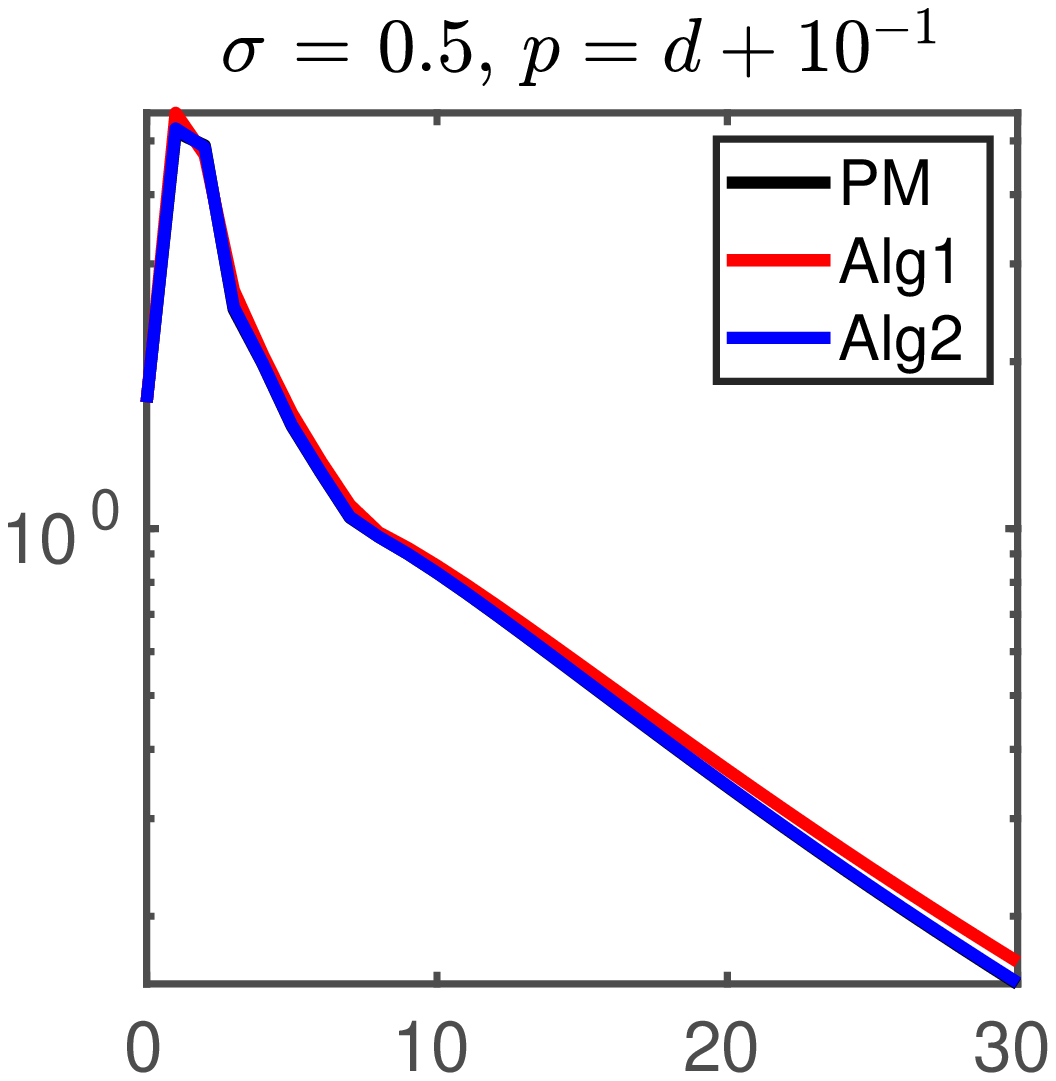}
	\includegraphics[width=.35\textwidth]{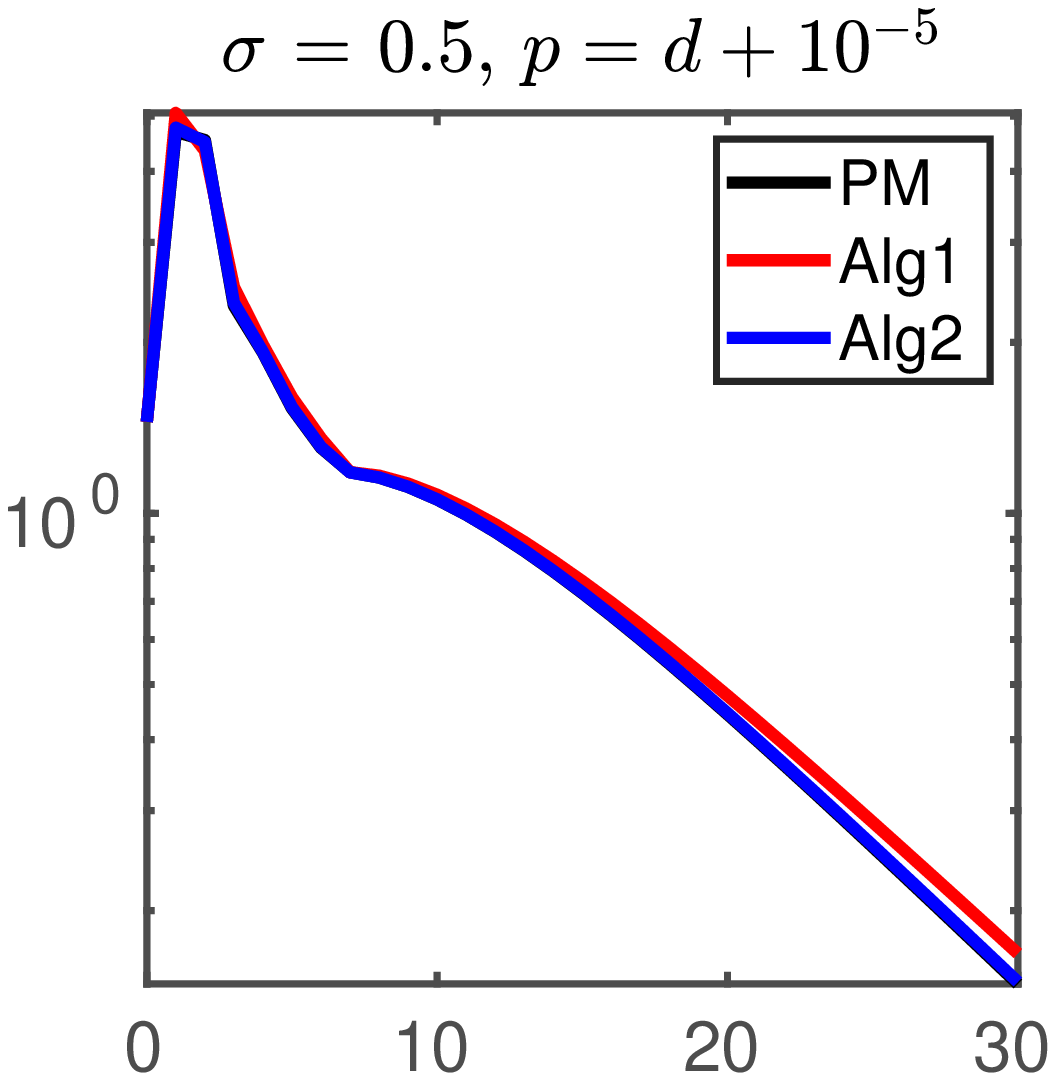}\\
	\includegraphics[width=.35\textwidth]{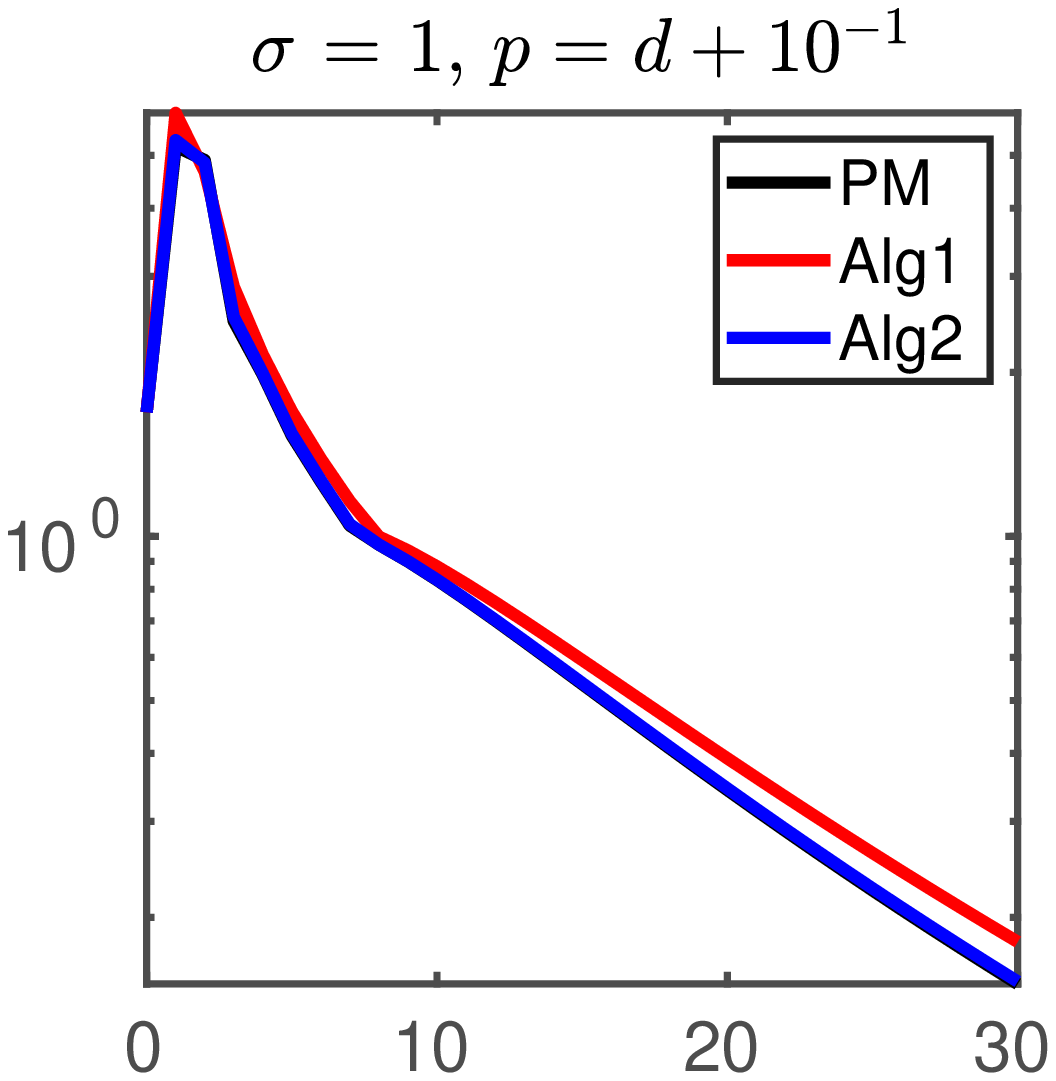}
	\includegraphics[width=.35\textwidth]{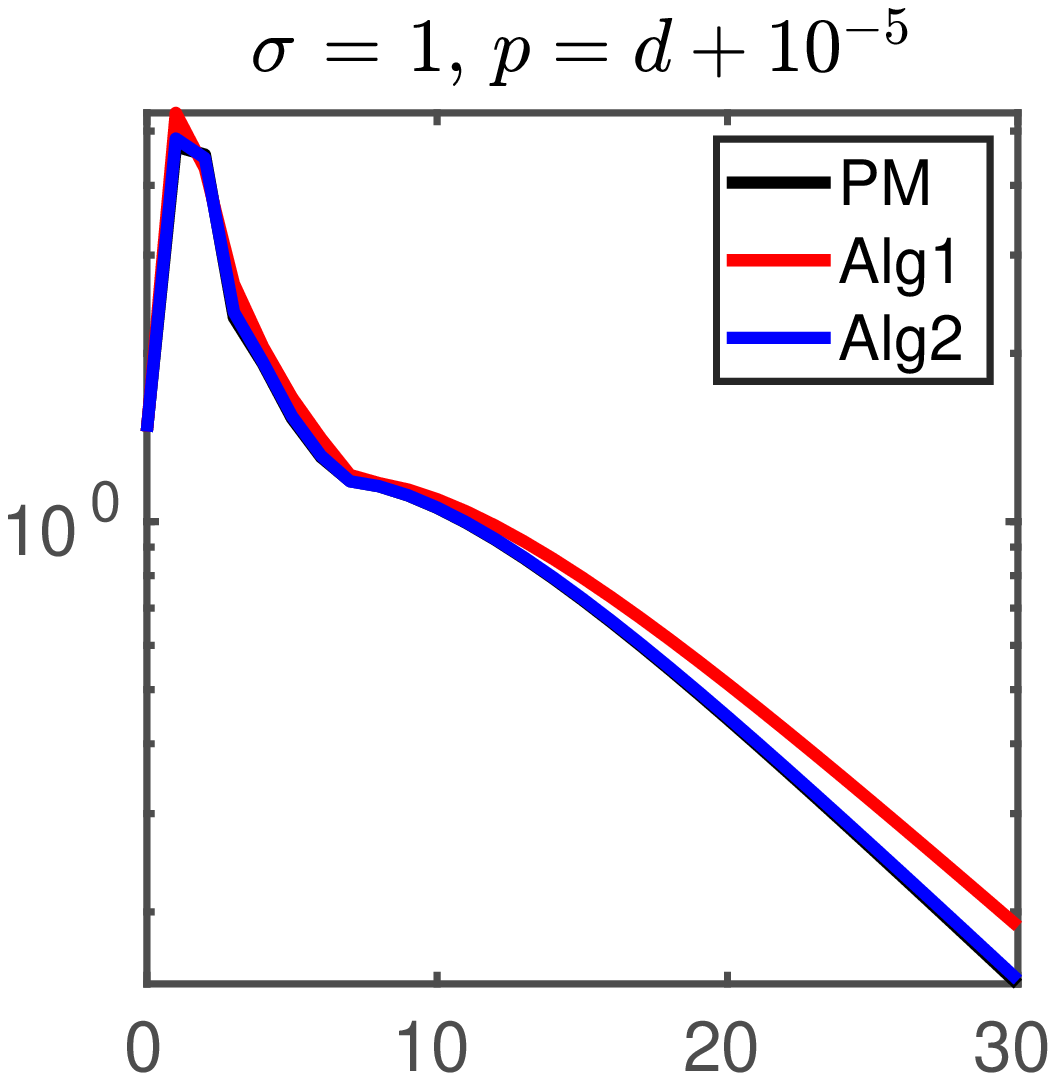}
	\vspace{-0.3cm}
	\caption{Experiments on $\b T\in\RR^{n\times n\times n}, \; n=2361$, generated using \texttt{yeast}. }\label{fig:tcycle2}

\vspace{0.3cm}
	\centering
	\includegraphics[width=.35\textwidth]{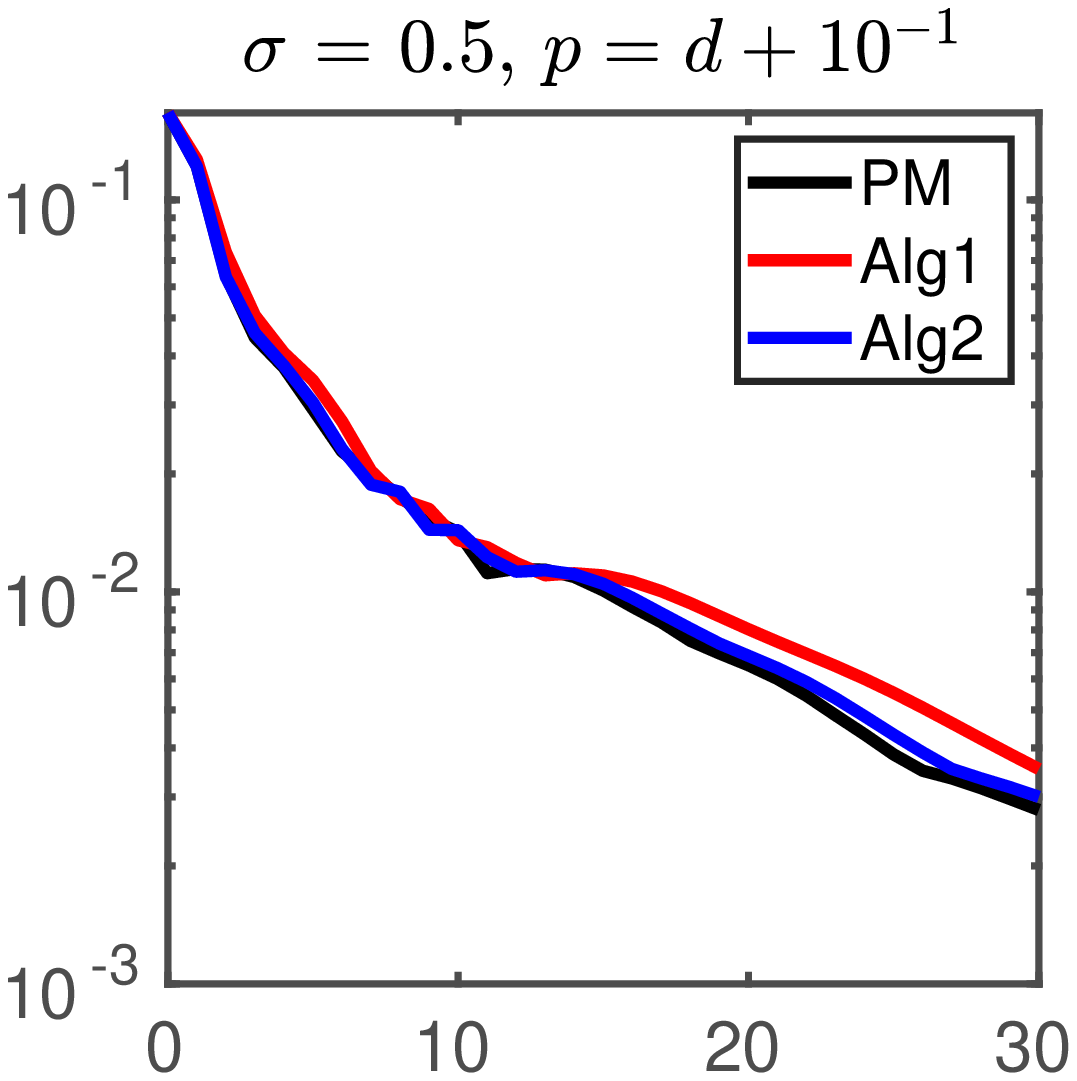}
	\includegraphics[width=.35\textwidth]{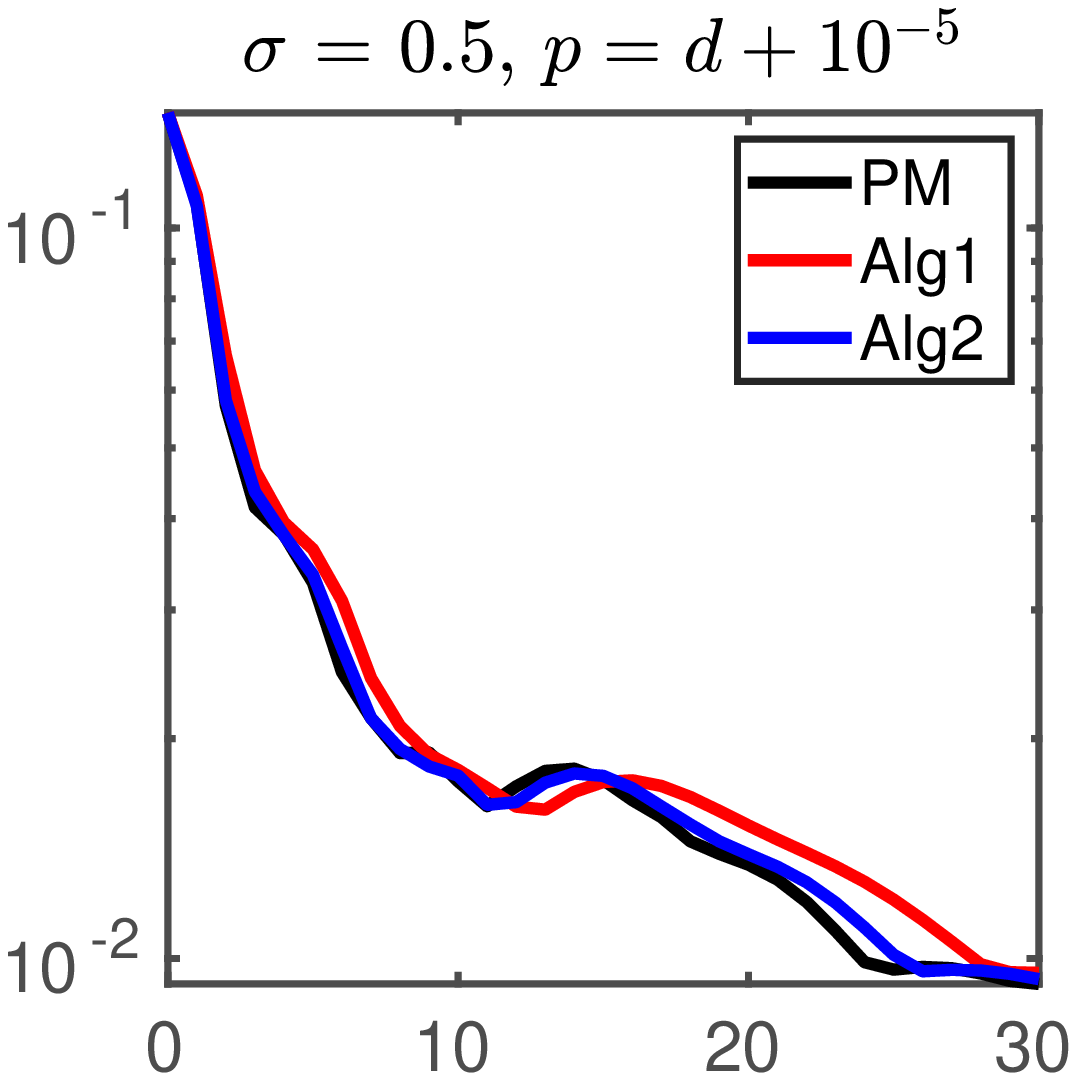}\\
	\includegraphics[width=.35\textwidth]{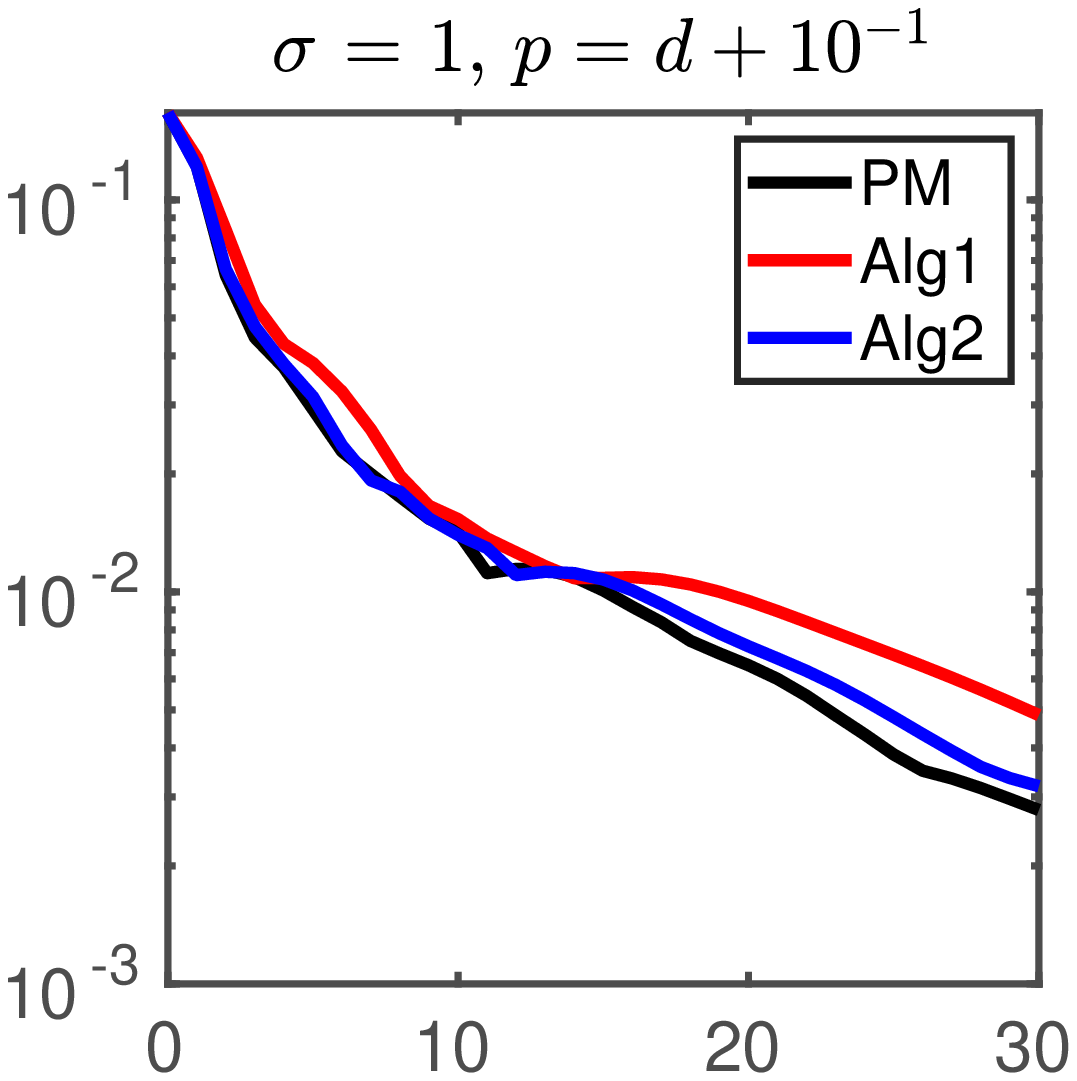}
	\includegraphics[width=.35\textwidth]{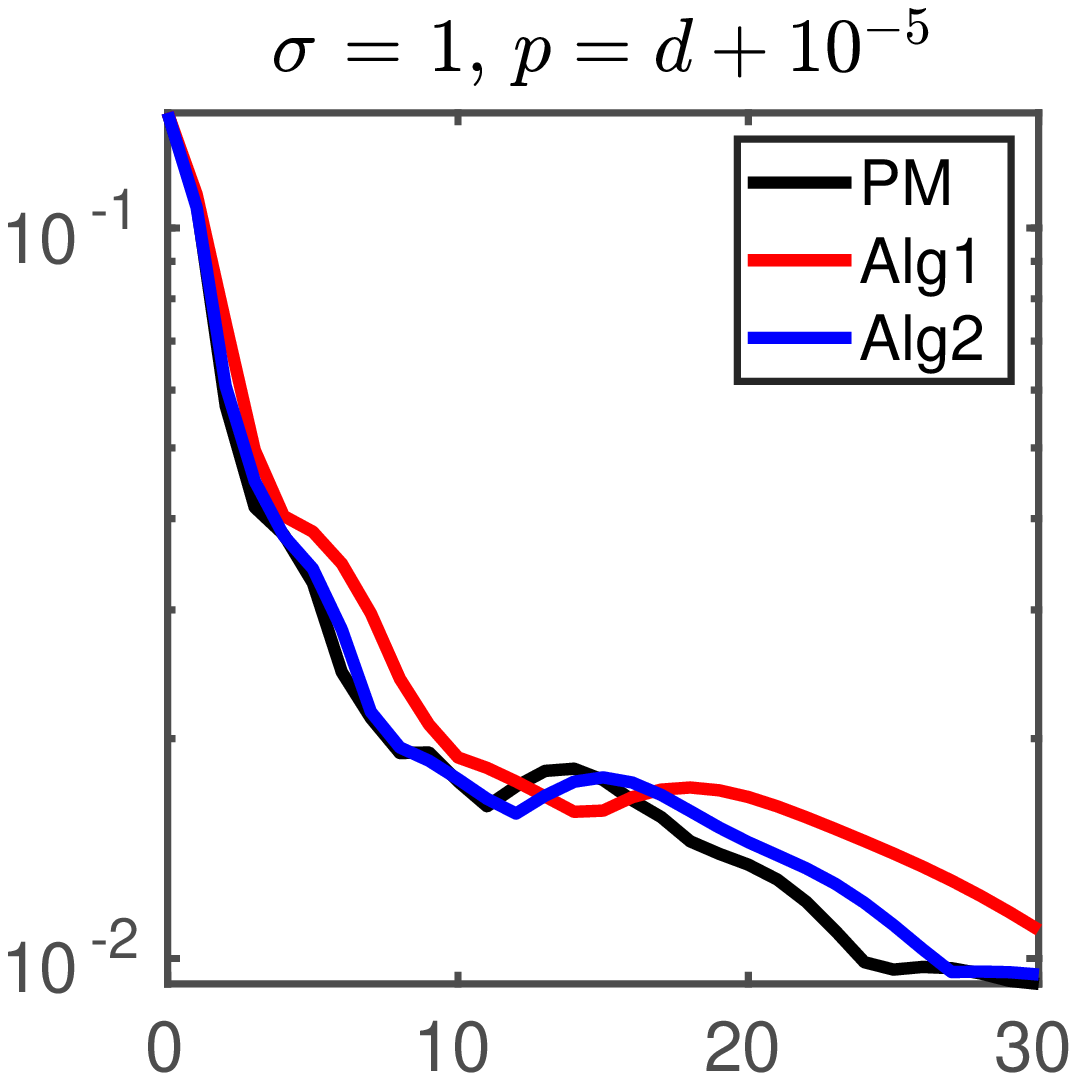}\\
	\vspace{-0.3cm}
	\caption{Experiments on $\b T\in\RR^{n\times n\times n}, \; n=1107$, generated using \texttt{gre$1107$}. }\label{fig:tcycle3}
\end{figure}

\begin{figure}[t]
	\centering
	\includegraphics[width=.35\textwidth]{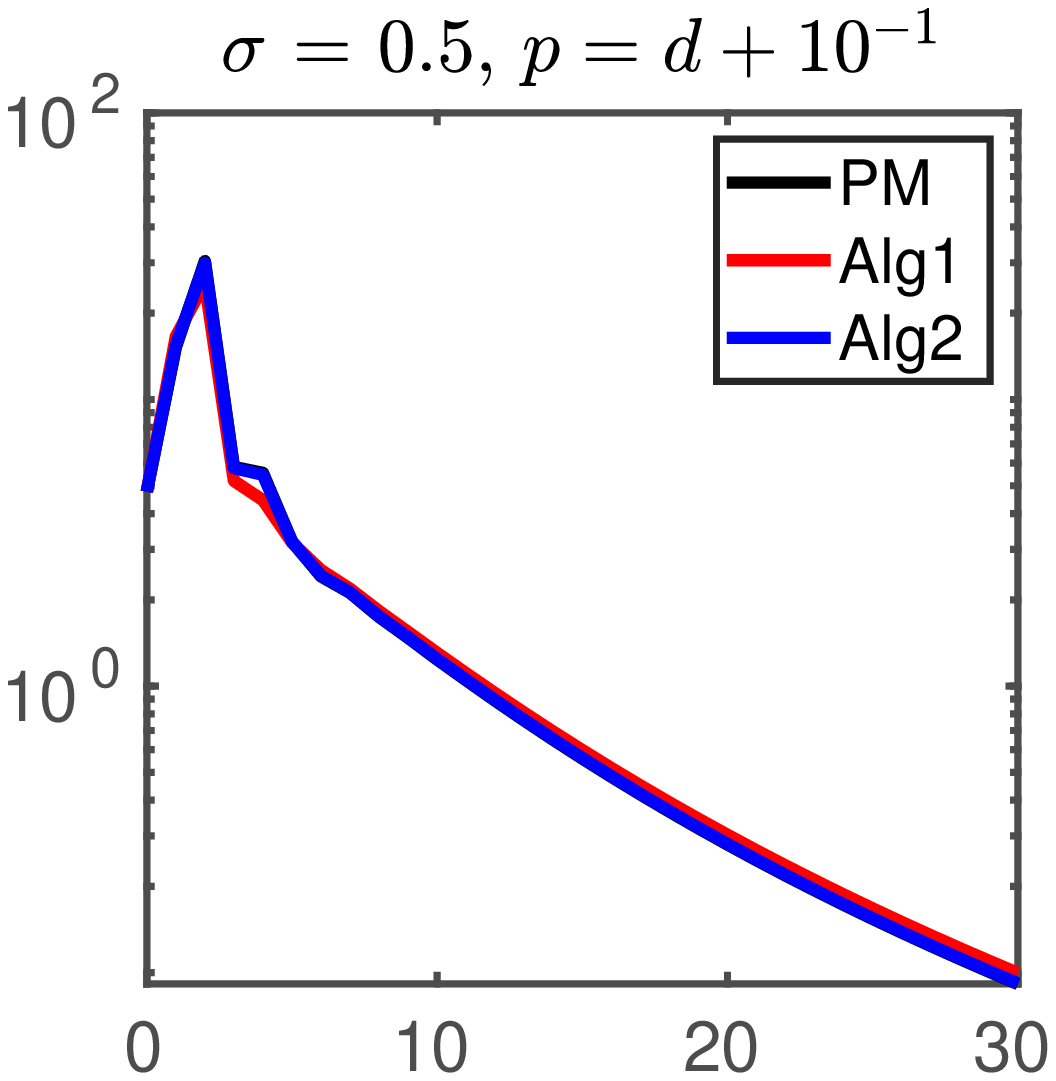}
	\includegraphics[width=.35\textwidth]{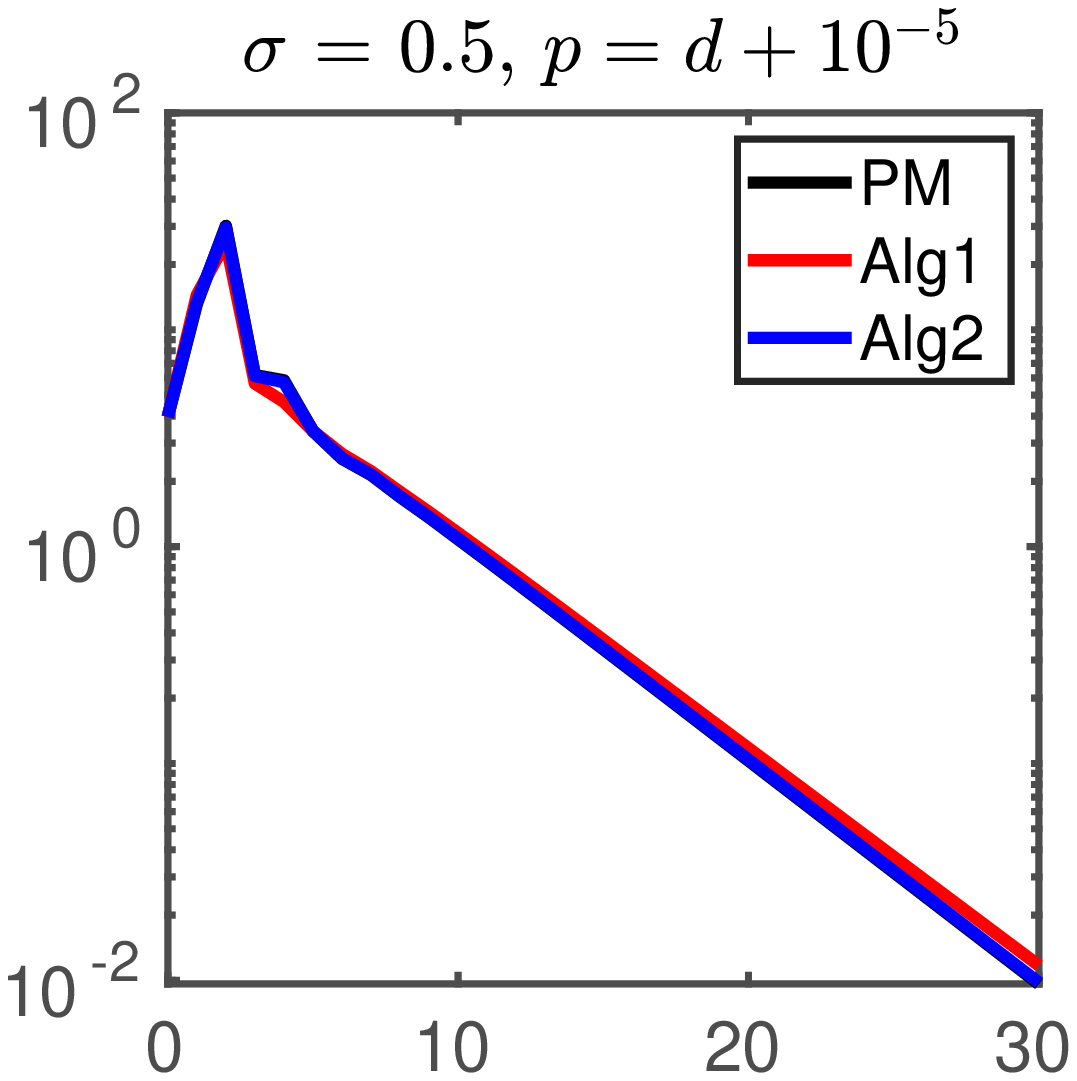}\\
	\includegraphics[width=.35\textwidth]{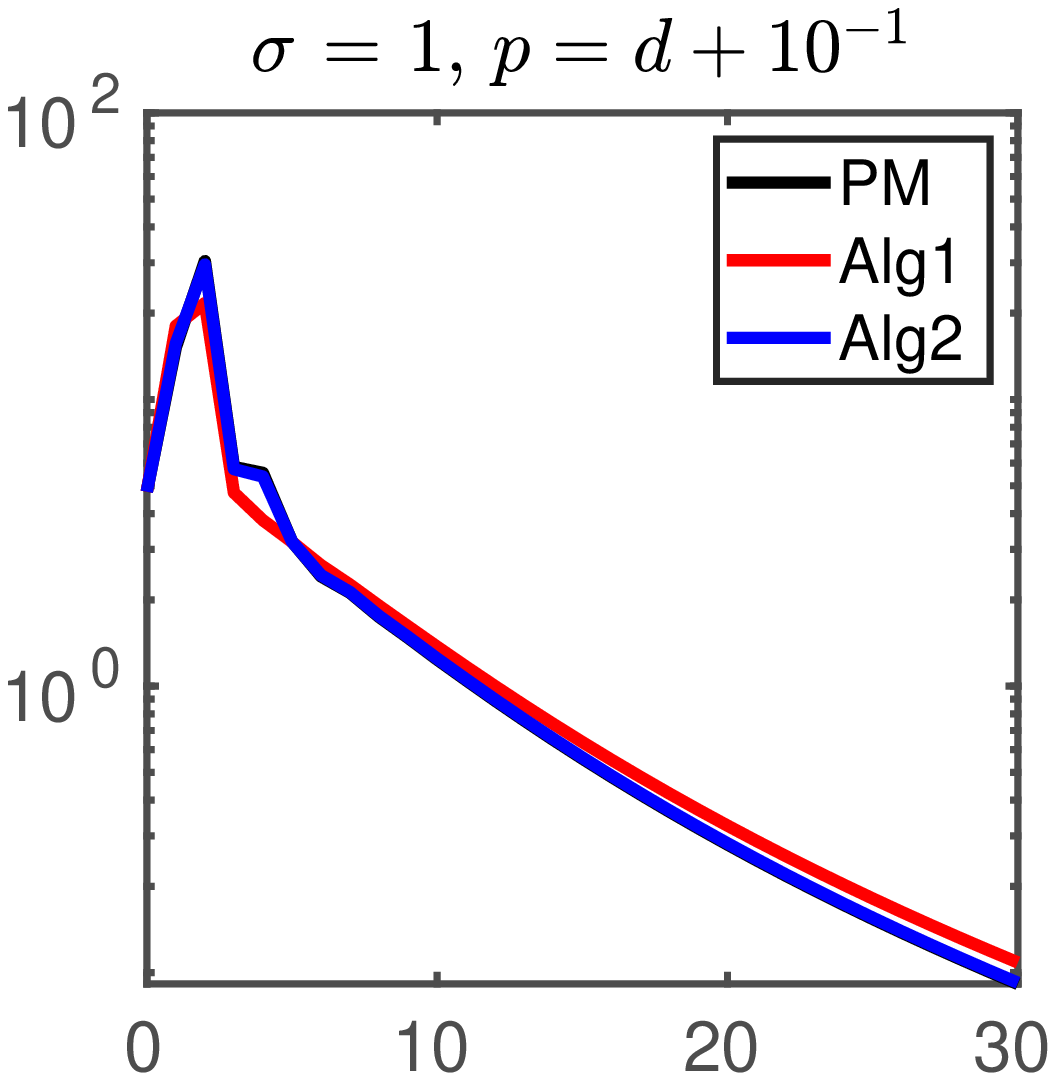}
	\includegraphics[width=.35\textwidth]{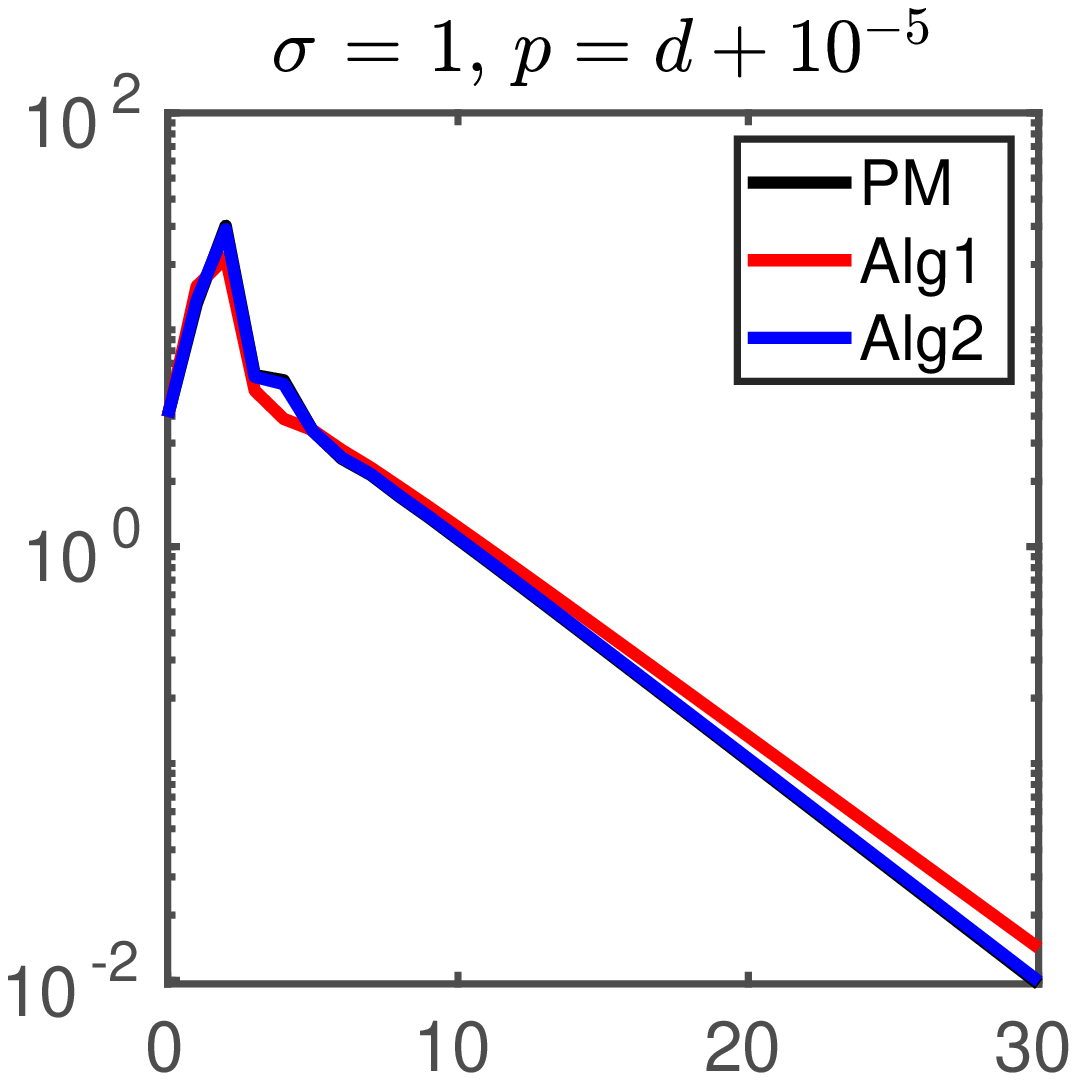}\\
	\vspace{-0.3cm}
	\caption{Experiments on $\b T\in\RR^{n\times n\times n}, \; n=9914$, generated using \texttt{wb-cs-stanford}. }\label{fig:tcycle4}
\end{figure}
We then compute the Perron $\ell^p$-eigenvector of {$\b T$} for different values of $p\approx d$, in order to quantify the hypergraph centrality of the network, following the eigenvector centrality model discussed in \cite{benson2019three}. In our tests we used  four real-world directed and undirected network datasets of different size coming from \cite{davis2011university}, as listed in  Table \ref{tab:datasets}. Results up to iteration $30$ are shown in Figures \ref{fig:tcycle1}, \ref{fig:tcycle2}, \ref{fig:tcycle3} and \ref{fig:tcycle4} where we plot the residual $\|{T}(\b x_k) -\lambda_k \Phi_p(\b x_k)\|_\infty$ for the four different three-cycle tensors obtained by the chosen datasets and for the three sequences obtained  with  Algorithms \ref{alg:shiftedPM1} and \ref{alg:shiftedPM2}, for the choices of $\sigma=0.5$ and $\sigma=1$,  and  the unshifted power method. 

\begin{table}[hbt]
	\centering  \fontsize{9}{5}\selectfont
	\begin{tabular}{|llll|}
		\hline
		Problem name   &  Size & nnz({adj(G)}) & nnz(${\b{T}}$) \\
		\hline
		\texttt{Dolphins} (undirected)  &  62    & 318 & 570 \\
		\texttt{yeast} (undirected)  &  2361 & 13828 &  35965 \\
		\texttt{gre$1107$} (directed)   &    1107  &  	5664 &  11045 \\
		\texttt{wb-cs-stanford}  (directed)   & 9914 & 36854 &  101992 \\
\hline	
\end{tabular}
	\caption{Datasets' information: {the first column `size' refers to the number of nodes in the network, whereas the two colums on the right show the number of nonzeros in the adjacency matrix (i.e.\ twice the number of edges) and in the three-cycle tensor, respectively.} All the three-cycle tensors generated from these datasets are such that ${T}(\uno) \geq 0$ except for \texttt{gre$1107$} for which strict inequality holds. Source \cite{davis2011university}.}\label{tab:datasets}
\end{table}

While we observe that the residual decreases, as expected, unlike previous example tests, all the three methods do not perform well for these datasets. In fact, over $50$ iterations are often not enough to achieve more than 2 digits of precision. For this reason, we introduce in the next section an extrapolation strategy. With this technique we {speed-up the original sequence by using the \textit{Simplified Topological Shanks' Transformations} and the corresponding algorithms (called in short STEA's) \cite{brezinski2014simplified,bmrz2017software,brezinski2017simplified}, obtaining extrapolated methods that achieve competitive performance on the considered network data.} 

\section{Extrapolation for fixed-point iterations} \label{sec:extrapolation}
The numerical experiments carried out in  Section \ref{sec:experiments_part1} have shown that on some class of problems (Section \ref{sec:real_world_data_tensors}) the rate of convergence of Algorithms \ref{alg:shiftedPM1}, \ref{alg:shiftedPM2} and of the power method can be quite slow. This can affect and limit the applicability of such methods for real world problems. Motivated by this observation, in this section, we introduce extrapolation techniques for accelerating the convergence of the sequence $(\b x_k)$ generated by Algorithm \ref{alg:shiftedPM1} or \ref{alg:shiftedPM2}.

The theory of extrapolation methods have been developed and successfully applied to a variety of problems, such as the solution of linear and nonlinear systems, matrix eigenvalue problems, the computation of matrix functions, the solution of integral equations, and many others
\cite{bouhamidi2011extrapolated,brezinski2013extrapolationbook,sidi2017vector,brezinski2017simplified,cipolla2019extrapolation}.
These  methods transform the original sequence $(\b x_k)$  into a new sequence $(\b y_k)$  by means of a {\it sequence transformation},
which, under some assumptions, converges faster to the limit.
The idea behind such transformations is typically  to assume that the original
sequence  $(\b x_k)$ behaves like a model sequence whose limit $\b x$ can be  computed exactly by a finite algebraic process.
The set ${\cal K}$ of these model sequences is called the {\it kernel} of the transformation. If the sequence
$(\b x_k)$ belongs to the kernel ${\cal K}$, then the transformed sequence ``converges in one step'', i.e.,
the sequence is transformed into the constant sequence, where the constant is the limit of the original sequence. If the
sequence $(\b x_k)$ does not belong to the kernel but it is ``close enough'' to it, then  there is a good chance
that the transformed sequence converges, to the same limit,   faster than the original sequence.
Among the existing sequence transformations  (also called {\it extrapolation methods}), the Shanks' transformation
\cite{brezinski2019genesis,brezinski2018shanks,shanks1955non} is arguably the best all-purpose method for accelerating the convergence of a sequence.
The kernel of the vector Shanks' transformation $\mathcal K_S$ contains the set of sequences $(\b x_k)$ for which there exists an integer $h$ such that for all $k$ we~have 
\begin{equation}
a_0({\b x}_k-{\b x})+\cdots +a_h({\b x}_{k+h}-{\b x})=0,
\quad k=0,1,\ldots
\label{kernelnew}
\end{equation}
for some real coefficients $a_i$ such that $a_0a_h \neq 0$ and $a_0+\cdots+a_h \neq 0$.
If we assume, without loss of generalities,  that $a_0+\cdots+a_h=1$, then for
each sequence of the kernel we have
\begin{equation} \label{eq:limit_linear_c}
	\b x = a_0 \b x_k +\dots + a_{h} \b x_{k+h}.
\end{equation}

Of course, even if the  given sequence $(\b x_k)$ does not  belong to the Shanks' kernel, we may apply the transformation, but in this case we obtain  a set of sequences, usually denoted
 $(\b e_h(\b x_k))$,  depending on $k$ and $h$, whose elements are given by
\begin{equation}\label{eq:shanks}
{\b e}_h({\b  x}_k) =	a_0^{(k,h)}{\b  x}_k+\cdots +a_h^{(k,h)}{\b x}_{k+h}
\end{equation}
where the coefficients $a_i^{(k,h)}$ are such that if $(\b x_k) \in \mathcal K_S$ then ${\b e}_h({\b  x}_k)=\b x$ for all $k$.
For a recent survey on Shanks' based transformations see \cite{brezinski2018shanks}.

Some of these Shanks'  transformations can be implemented  recursively, and, among these, the Topological Shanks' Transformations can be implemented  by the Topological Epsilon Algorithms, in short TEA's \cite{brezinski1975generalisations}.
Recently, simplified versions of these algorithms, called the {Simplified Topological Epsilon Algorithms}
(STEA's), have been introduced. These simplified algorithms have three main advantages with respect to the original ones:
the numerical stability can be sensibly improved, the rules defining the extrapolated sequence are simpler and
the computational cost is reduced both in terms of memory allocation and in terms of operations to be performed.

Finally, let us observe that Shanks' transformations can be coupled with a restarting technique which is particularly suited for fixed-point problems (see \cite{brezinski2017simplified, cipolla2019extrapolation}) and that roughly goes as follows.
Assume that we are interested in a fixed-point $\b x$ of a mapping $\mathcal F: {\mathbb R}^n \rightarrow
{\mathbb R}^n$.  We compute a certain number of basic iterates $\b x_{k+1}= \mathcal F(\b x_k)$ from a given ${\b x}_0$.
Then we apply the extrapolation algorithm to them, and we restart the
basic iterates from the computed extrapolated term and so on.
The advantage of this approach is that, under suitable regularity assumptions on $\mathcal F$ and if  $h$ is large enough, the sequence generated in this way  converges  quadratically to the fixed point of $\mathcal F$ \cite{le1992quadratic}.

In the next Section \ref{sec:topological_epsilon_algorithm}, we briefly review  the
topological  Shanks' transformations and their simplified versions (in particular the STEA2 algorithm that  is the less expensive in terms of memory requirement). For further details, see
\cite{brezinski2014simplified,brezinski2017simplified}. Whereas,  in  Section \ref{sec:stea_for_powermethod} we discuss the details of the restarted procedure and how this applies to  the specific $\ell^p$-eigenvector setting.
\begin{figure}[tb]
	\centering
	\fbox{
		\begin{tikzpicture}
		\newcommand*{\cx}{2.5}
		\newcommand*{\cy}{1} 
		\newcommand*{\s}{1.5} 
		\newcommand{\myvdots}[1]{\raisebox{#1\baselineskip}{\ensuremath{\vdots}}}
		
		\node (00) at (0,8*\cy) {$\b z_0^{(0)}\!\!=\!\b x_0$};
		\node (10) at (0,7*\cy) {$\b z_0^{(1)}\!\!=\!\b x_1$};
		\node (20) at (0,6*\cy) {$\b z_0^{(2)}\!\!=\!\b x_2$};
		\node (30) at (0,5*\cy) {$\b z_0^{(3)}\!\!=\!\b x_3$};
		\node (40) at (0,4*\cy) {$\b z_0^{(4)}\!\!=\!\b x_4$};
		\node (50) at (0,3*\cy) {$\b z_0^{(5)}\!\!=\!\b x_5$};
		\node (60) at (0,2*\cy) {$\b z_0^{(6)}\!\!=\!\b x_6$};
		\node (70) at (0,1*\cy) {$\b z_0^{(7)}\!\!=\!\b x_7$};
		\node (80) at (0,0*\cy) {$\b z_0^{(8)}\!\!=\!\b x_8$};
		\node      at (0,-.5*\cy) {$\vdots$};
		\node (b0) at (0,-1.3*\cy){$(\widetilde{\b e}_0(\b x_k))\!=\! (\b x_k)$};
		
		\node (01) at (1*\cx,7*\cy) {$\b z_2^{(0)}$};
		\node (11) at (1*\cx,6*\cy) {$\b z_2^{(1)}$};
		\node (21) at (1*\cx,5*\cy) {$\b z_2^{(2)}$};
		\node (31) at (1*\cx,4*\cy) {$\b z_2^{(3)}$};
		\node (41) at (1*\cx,3*\cy) {$\b z_2^{(4)}$};
		\node (51) at (1*\cx,2*\cy) {$\b z_2^{(5)}$};
		\node (61) at (1*\cx,1*\cy) {$\b z_2^{(6)}$};
		\node      at (1*\cx,0*\cy) {$\myvdots{1}$};
		\node      at (1*\cx,-.5*\cy) {$\vdots$};
		\node (b1) at (1*\cx,-1.3*\cy){$(\widetilde{\b e}_1(\b x_k))$};
		
		\node (02) at (2*\cx,6*\cy) {$\b z_4^{(0)}$};
		\node (12) at (2*\cx,5*\cy) {$\b z_4^{(1)}$};
		\node (22) at (2*\cx,4*\cy) {$\b z_4^{(2)}$};
		\node (32) at (2*\cx,3*\cy) {$\b z_4^{(3)}$};
		\node (42) at (2*\cx,2*\cy) {$\b z_4^{(4)}$};
		\node      at (2*\cx,1*\cy) {$\myvdots{1}$};
		\node      at (2*\cx,0*\cy) {$\myvdots{1.5}$};
		\node 	   at (2*\cx,-.5*\cy) {$\vdots$};
		\node (b2) at (2*\cx,-1.3*\cy){$(\widetilde{\b e}_2(\b x_k))$};

		\node (03) at (3*\cx,5*\cy) {$\b z_6^{(0)}$};
		\node (13) at (3*\cx,4*\cy) {$\b z_6^{(1)}$};
		\node (23) at (3*\cx,3*\cy) {$\b z_6^{(2)}$};
		\node      at (3*\cx,2*\cy) {$\myvdots{1}$};
		\node      at (3*\cx,1*\cy) {$\myvdots{1.5}$};
		\node      at (3*\cx,0*\cy) {$\myvdots{1.7}$};
		\node 	   at (3*\cx,-.5*\cy) {$\vdots$};
		\node (b3) at (3*\cx,-1.3*\cy){$(\widetilde{\b e}_3(\b x_k))$};

		\node (04) at (4*\cx,4*\cy) {$\b z_8^{(0)}$};
		\node      at (4*\cx,3*\cy) {$\myvdots{1}$};
		\node      at (4*\cx,2*\cy) {$\myvdots{1.5}$};
		\node      at (4*\cx,1*\cy) {$\myvdots{1.7}$};
		\node      at (4*\cx,0*\cy) {$\myvdots{1.9}$};
		\node 	   at (4*\cx,-.5*\cy) {$\vdots$};
		\node (b4) at (4*\cx,-1.3*\cy){$(\widetilde{\b e}_4(\b x_k))$};

		\foreach \K [count=\x] in {7,5,3,1}{
			\pgfmathsetmacro\col{int(\x-1)}
			\foreach \i in {1,...,\K}{
				\pgfmathsetmacro\j{int(\i-1)}
				\pgfmathsetmacro\k{int(\i+1)}
				\draw[thick,-{>[scale=\s]}] (\i\col)-- (\j\x);
				\draw[thick,-{>[scale=\s]}] (\k\col) --(\j\x.south west);
			}
		}
		
		
		\end{tikzpicture}
	}
	\caption{Diagramatic illustration of the ``triangular'' recursive rule \eqref{eq:recursive_rule} for the computation of the extrapolated sequence $(\tilde{\b e}_h(\b x_k)) = (\b z_{2h}^{(k)})$, for $h = 1,2,3,4$. The arrows emphasize the  dependence of $\b z_{2j+2}^{(i)}$ upon only $\b z^{(i+2)}_{2j}$ and $\b z^{(i+1)}_{2j}$.}\label{fig:triangular_scheme}
\end{figure}
%

\subsection{Topological Shanks' transformations and the STEA2 algorithm} \label{sec:topological_epsilon_algorithm}
For $p>d$, let $(\b x_k)$ be the sequence generated by either Algorithm \ref{alg:shiftedPM1} or \ref{alg:shiftedPM2} and let $\b x$ be the Perron $\ell^p$-eigenvector limit of such sequence.
The so-called \textit{second Topological Shanks' Transformation}  starts from  the original
sequence and, given an arbitrary  nonzero $\b y \in \RR^n$ and a fixed integer $h$, produces a new
sequence $(\widetilde{\b e}_h({\b x}_k))$ defined as
\begin{align}
\widetilde{\b e}_h({\b x}_k)&=a_0^{(k,h)}{\b x}_{k+h}+\cdots+a_h^{(k,h)}{\b x}_{k+2h}, \label{eq:shanks_transformations2}
\end{align}
 where the coefficients $a_i^{(k,h)}$ are the solutions of the linear system
\begin{equation}\label{eq:linear_system_neq_sequence}
\begin{bmatrix}
1 & \dots & 1 \\
b_0 & \dots & b_{h} \\
\vdots & & \vdots \\
b_{h-1} & \dots & b_{2h-1}
\end{bmatrix}
\begin{bmatrix}
a_0^{(k,h)} \\
 \vdots\\
  a_h^{(k,h)}
\end{bmatrix} =
\begin{bmatrix}
1 \\
0\\
 \vdots\\
 0
\end{bmatrix}
\end{equation}
and $b_i:= \b y^T(\b x_{k+i+1}-\b x_{k+i}),$ for $i=0,\dots,2h-1$. 
Note that  for this transformation it holds that if $(\b x_k) \in \mathcal{K}_S$, then $\widetilde{\b e}_h({\bf x}_k)={\bf x}$ for all $k$, i.e., all the
transformed terms coincide with the limit. However, we often do not know whether the original
sequence satisfies \eqref{eq:shanks} and, if so, we do not know what is the correct value of $h$. So, in practice,
we fix an arbitrary integer $h$ and transform the sequence via \eqref{eq:shanks_transformations2} using such an integer.
%

The second Simplified Topological $\epsilon$-Algorithm (STEA2) \cite{brezinski2014simplified} allows us to compute
the terms of the new sequence $\widetilde{\b e}_h({\bf x}_k)$ via four equivalent recursive formulas without
solving explicitly the linear system given by \eqref{eq:linear_system_neq_sequence}. Here we focus on the third one.

Define the vectors $\b z_{2j}^{(i)}$ as follows: set $\b z^{(k)}_{0}=\mathbf{x}_k$ and, for  $i,j=0,1,\dots$, compute
\begin{equation}\label{eq:recursive_rule}
\b z^{(i)}_{2j+2}={\b z}^{(i+1)}_{2j}+
\frac{\epsilon_{2j+2}^{(i)}-\epsilon_{2j}^{(i+1)}}{\epsilon_{2j}^{(i+2)}-\epsilon_{2j}^{(i+1)}}  (\b z^{(i+2)}_{2j}-
\b z^{(i+1)}_{2j}), 
\end{equation}
 where the scalar quantities $\epsilon_{j}^{(i)}$ are given by the Wynn's scalar $\varepsilon$-algorithm \cite{wynn1956} (an algorithm implementing the Shanks' transformation for scalar sequences) applied to
the sequence $(\b y^T \b x_0), (\b y^T \b x_1), (\b y^T \b x_2),\dots$.
The link with the transformation is given by the fact that we have
$\b z^{(k)}_{2h}=\widetilde{\mathbf{e}}_h(\mathbf{x}_k)$ (see e.g.\ \cite{brezinski1975generalisations}),
and thus \eqref{eq:recursive_rule} allows us to compute the desired extrapolated sequence $(\widetilde{\mathbf{e}}_h(\mathbf{x}_k))$ with few simple operations. Indeed, note that \eqref{eq:recursive_rule} contains only sums and differences of vectors  and it relies only on two terms of a triangular scheme, as illustrated in Figure \ref{fig:triangular_scheme}.
Of course there are also the inner products for the inizialization of  Wynn's scalar
$\varepsilon$-algorithm.

\subsection{Restarted extrapolation method for $\ell^p$-eigenpairs}\label{sec:stea_for_powermethod}
When dealing with fixed-point problems $\mathcal{F}({\bf x})={\bf x}$, as previously pointed out, it is often advisable to couple the extrapolation method with a restarting technique \cite{brezinski2017simplified}.
If we consider the STEA2 scheme, the general restarted method is presented in Algorithm~\ref{alg:restarted_method}.
\begin{algorithm}[t]
\caption{Restarted extrapolation method}\label{alg:restarted_method}
	\DontPrintSemicolon
	\KwIn{Choose $2h, \;\mathsf{cycles}\; \in \mathbb{N}, \b x_0 \hbox{ and } \b y \in \RR^{n}$}
	\For {$i=0,1, \ldots, \mathsf{cycles}$ (outer iterations)}{
	 Compute ${x_{0}} = {\bf{ y}}^T{{\bf{ x}}}_{0}$\;
	\For{$k=1, \dots, 2h$ (inner iterations)}{
	Compute ${\bf x}_k=\mathcal F({\bf x}_{k-1})$\;
	Compute ${x_{k}} = {\bf{ y}}^T{{\bf{ x}}}_{k}$}
	Apply STEA2 to ${\bf x}_0, \dots {\bf x}_{2h}$  and ${x}_0, \dots {x}_{2h}$ to compute   $\b z^{(0)}_{2h}=\widetilde{\b {e}}_h(\b {x}_0)$ \;
	Set $\b x_{0}=  \b z^{(0)}_{2h}$\;
	Choose ${\bf y} \in \RR^n$\;
}
\end{algorithm}
It is important to remark that, when $h=n$, where $n$ is the dimension of the problem, the sequence  
generated by Algorithm~\ref{alg:restarted_method} converges quadratically to $\mathbf x$, under suitable regularity assumptions \cite{le1992quadratic} on $\mathcal F$.

Since all the algorithms we took into consideration in Section \ref{sec:PM} are based on a fixed-point iteration, for all of them we consider the restarted extrapolation method described in Algorithm~\ref{alg:restarted_method}. In our case, $\mathcal{F}$ is either the  iteration map of Algorithm \ref{alg:shiftedPM1} or that of Algorithm \ref{alg:shiftedPM2}.
Concerning the computational complexity and the storage requirements, in our experimental investigations, we used the public available  software EPSfun Matlab toolbox, na44 package in Netlib \cite{bmrz2017software} that contains optimized versions of the STEA algorithms. The $2h+1$ vectors $\b x_0, \dots, \b x_{2h}$ are computed together with the extrapolation scheme, and thus only  $h+2$ vectors of dimension $n$ have to be stored in order to  compute $\widetilde{\b e}_{h}({\bf x}_0)$ (see \cite{brezinski2017simplified,bmrz2017software} for implementation details).
Observe that, in addition to applying the power method map $\mathcal F$, each outer cycle also requires to compute $2h+1$ scalar products.

The practical implementation and the performance of the outlined  method rely on two key parameter choices: the choice of $h$ and the choice of $\b y \in \RR^n$. As described above, the choice of $h$  is connected to the memory requirement and influences the quality of the speed-up performance. In general, also for relatively small problems, we choose a value
of $h$ smaller than $n$ (the dimension  of the problem). 
This is the case, for instance, of the real-world  examples of Section \ref{sec:experiments_extrapolated}.
Concerning the choice of $\b y \in \RR^n$, this is a well-known critical point in the topological Shanks' transformations and it is usually addressed by model-dependent heuristics. In fact, no general theoretical result has been obtained so far concerning the  selection of an optimal $\b y \in \RR^n$.
In our examples, in each cycle we take a different $\b y$,  chosen as $\b y = \widetilde{\b e}_{h}({\bf x}_0)$, i.e., the last extrapolated term, in the first cycle apart where is set equal to ${\bf x}_0$. The quality of this choice is supported by the  performance we obtained and by the fact that in all our tests the resulting extrapolated vectors computed with such a choice of $\b y$ belong to the desired cone~$C_+(\b T)$.
\begin{figure}[p]
	\centering
	\includegraphics[width=.32\textwidth]{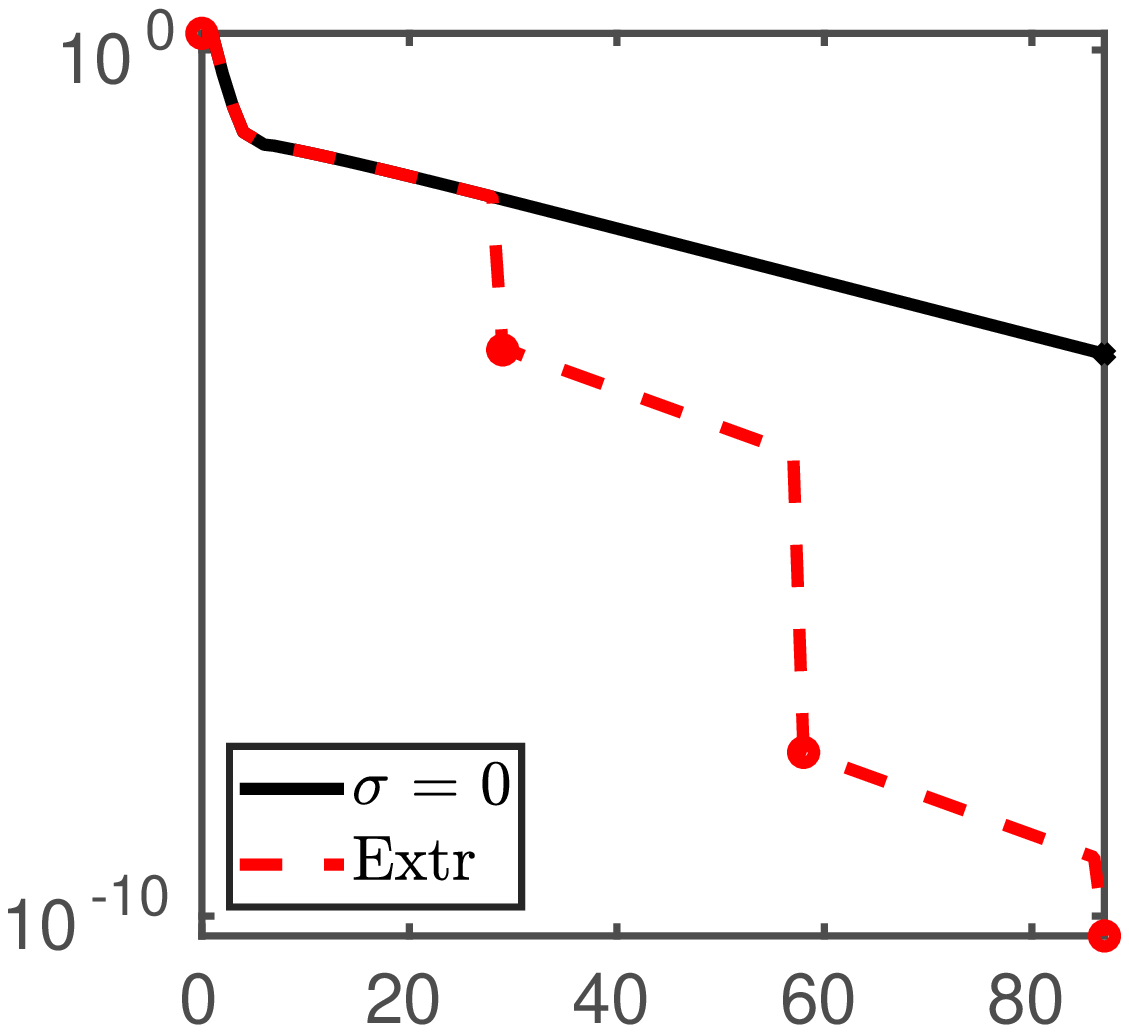}
	\includegraphics[width=.32\textwidth]{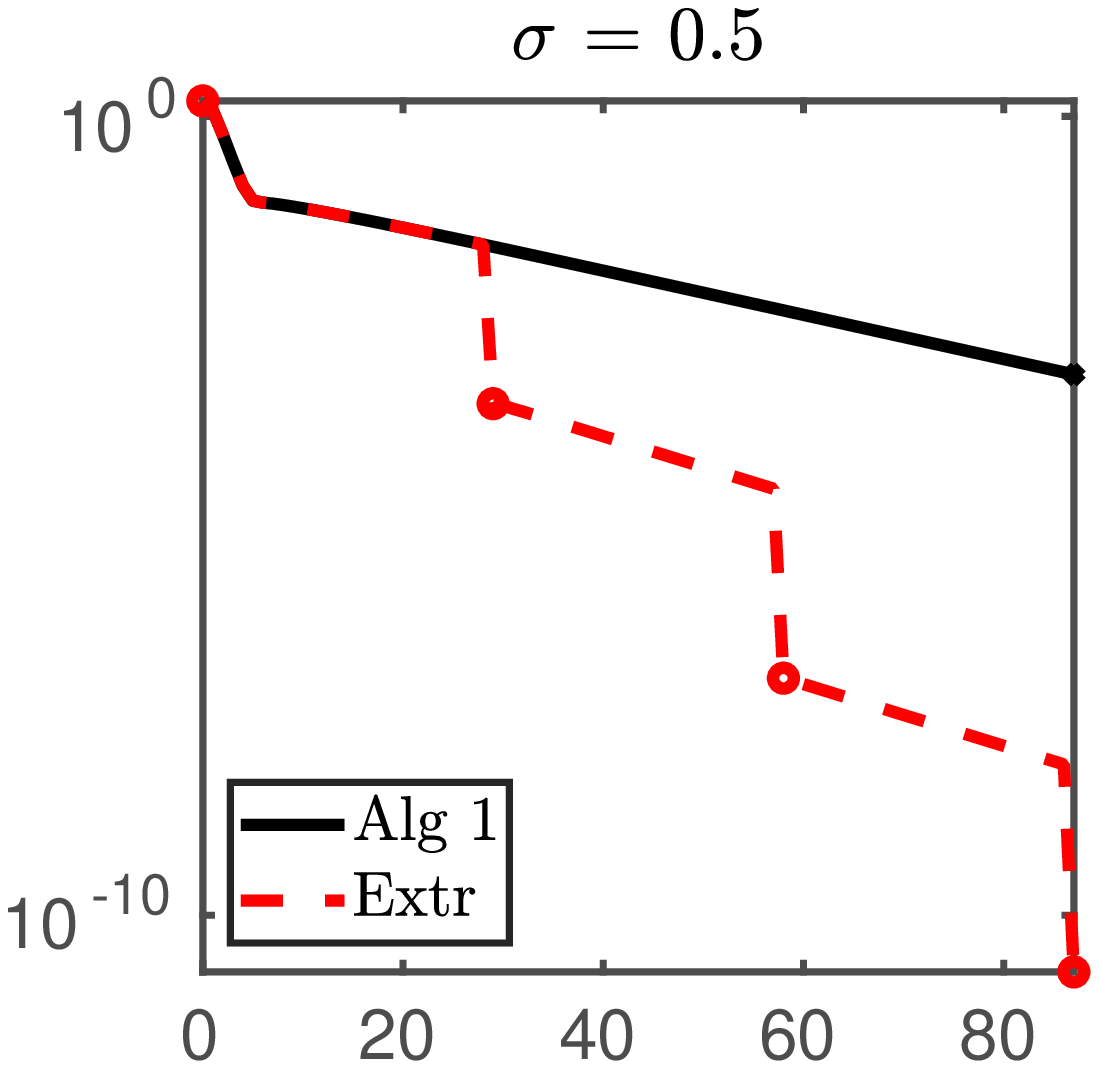}
	\includegraphics[width=.32\textwidth]{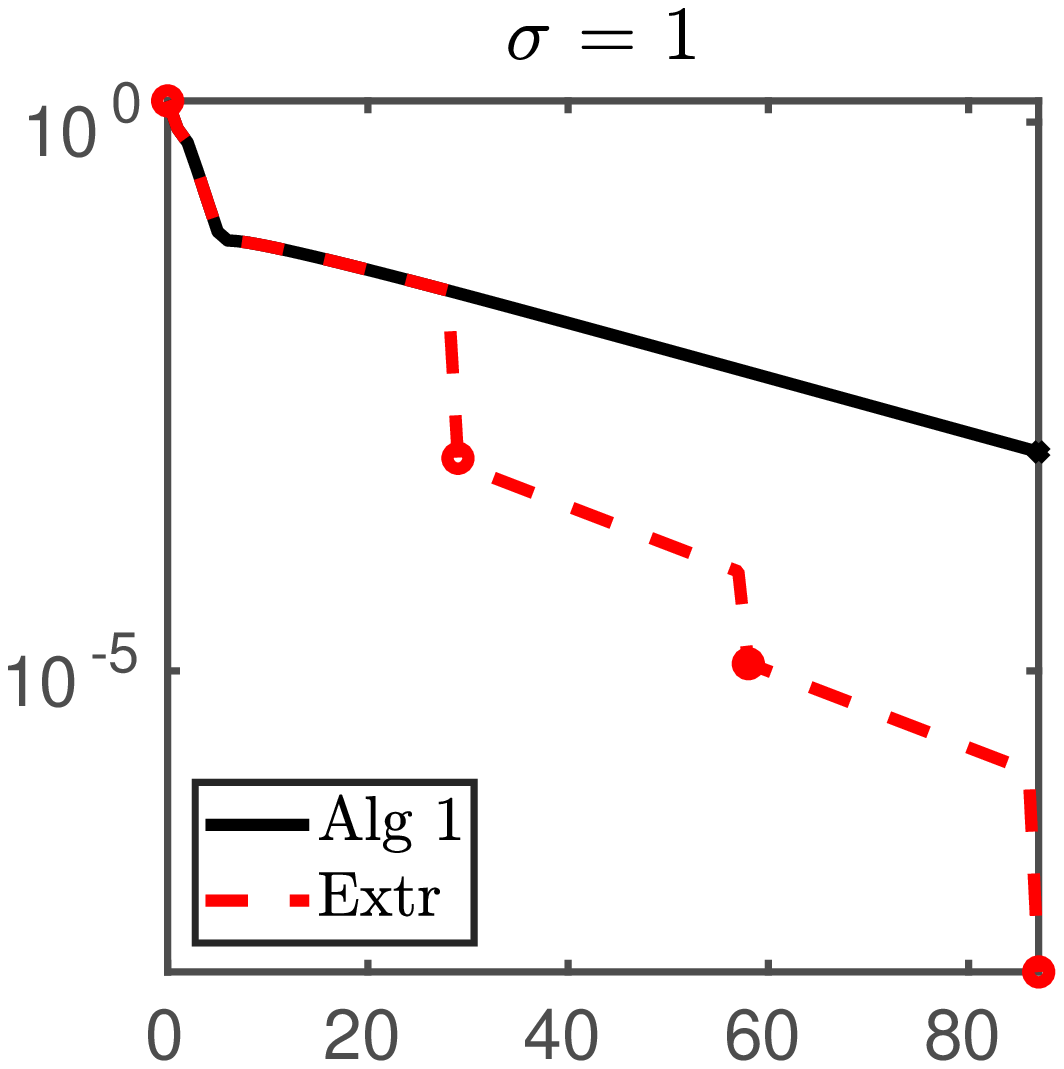}\\
	\includegraphics[width=.32\textwidth]{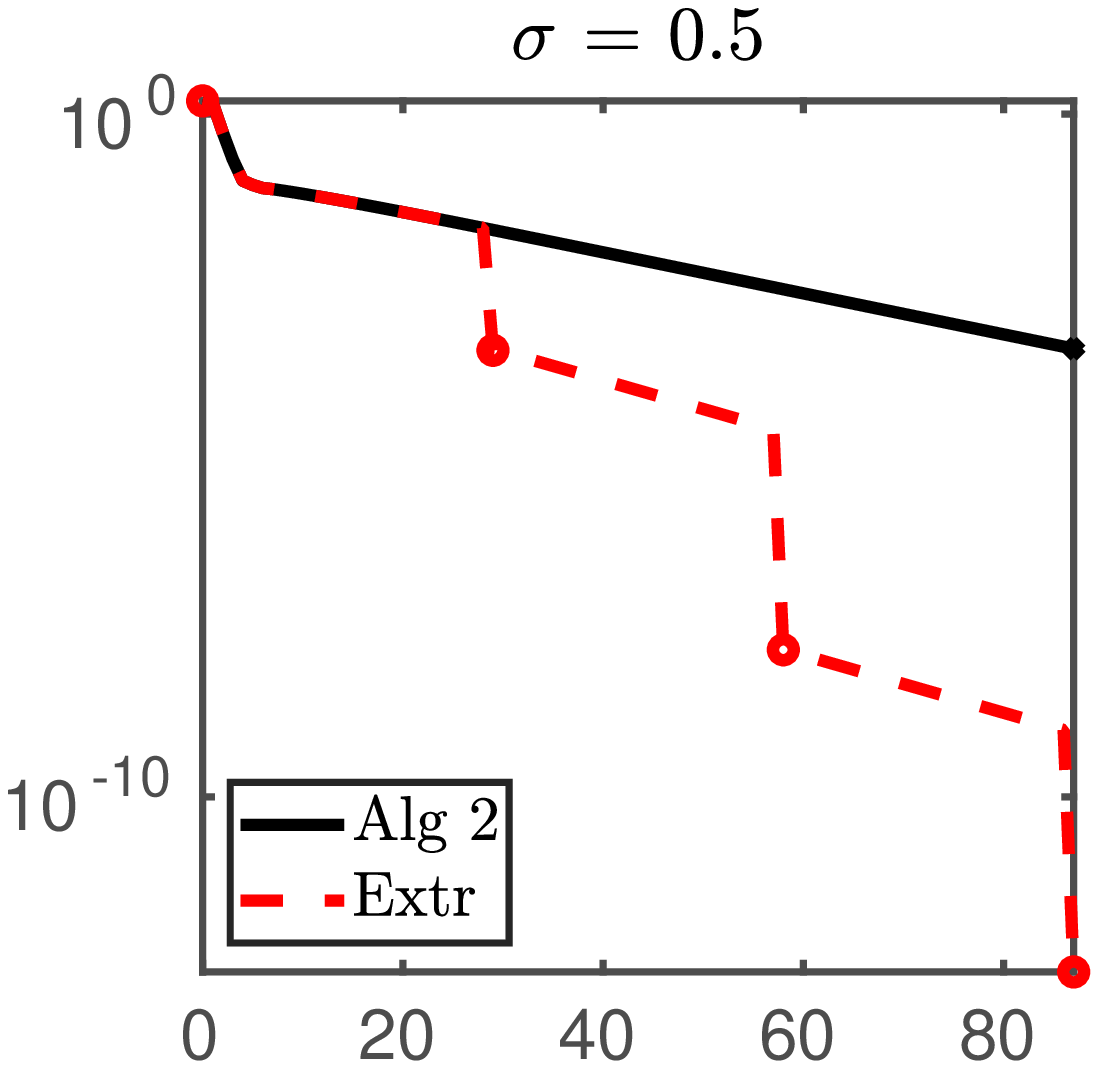}
	\includegraphics[width=.32\textwidth]{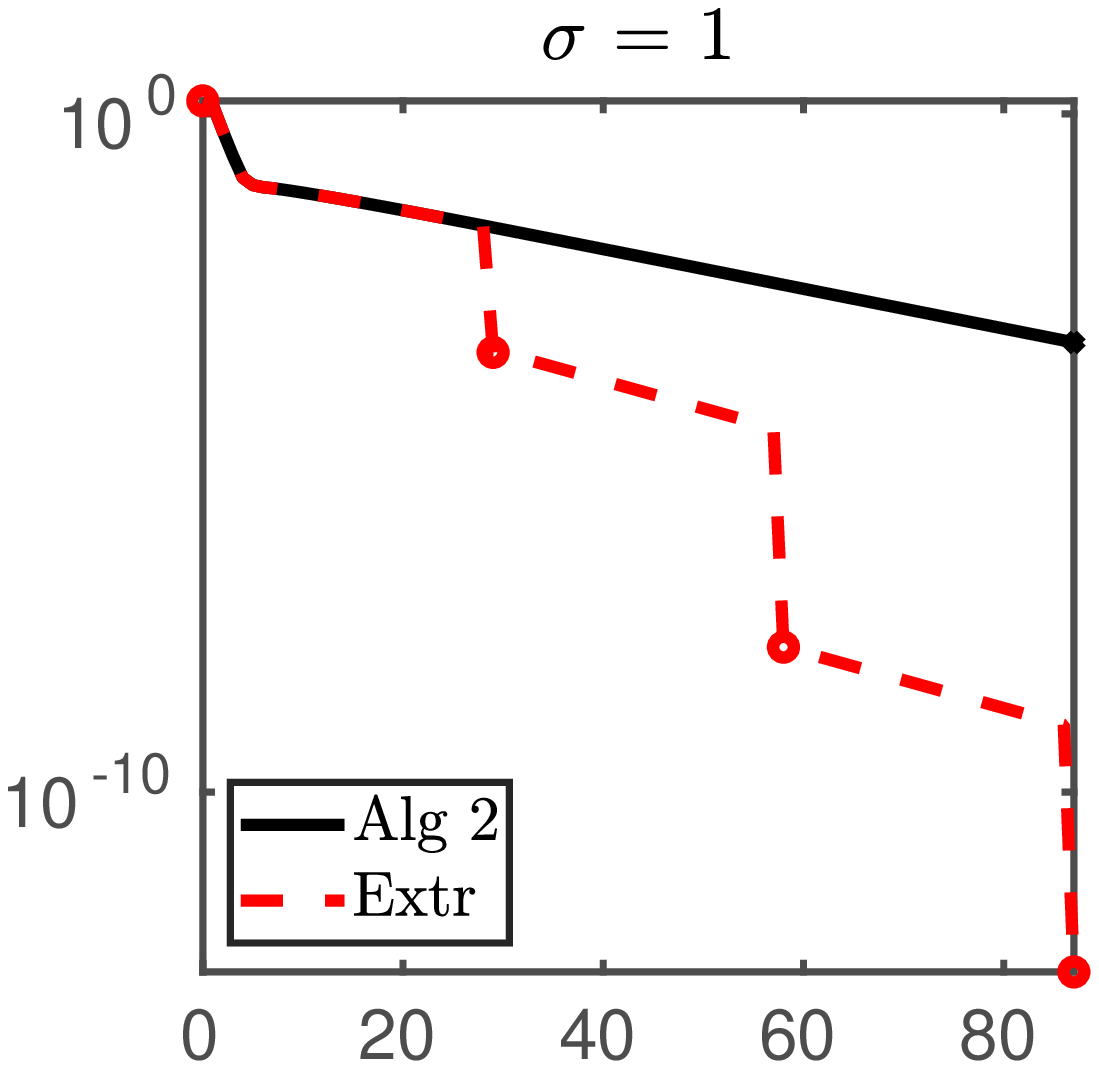}
	\caption{STEA2 on $\b T\in\RR^{n\times n\times n}, \; n=62$, generated using \texttt{dolphins}.  $2h=28$, $\mathsf{cycles}=3$, $p=d+10^{-5}$.}\label{fig:etcycle1}
	
\vspace{0.4cm}
	\centering
	\includegraphics[width=.32\textwidth]{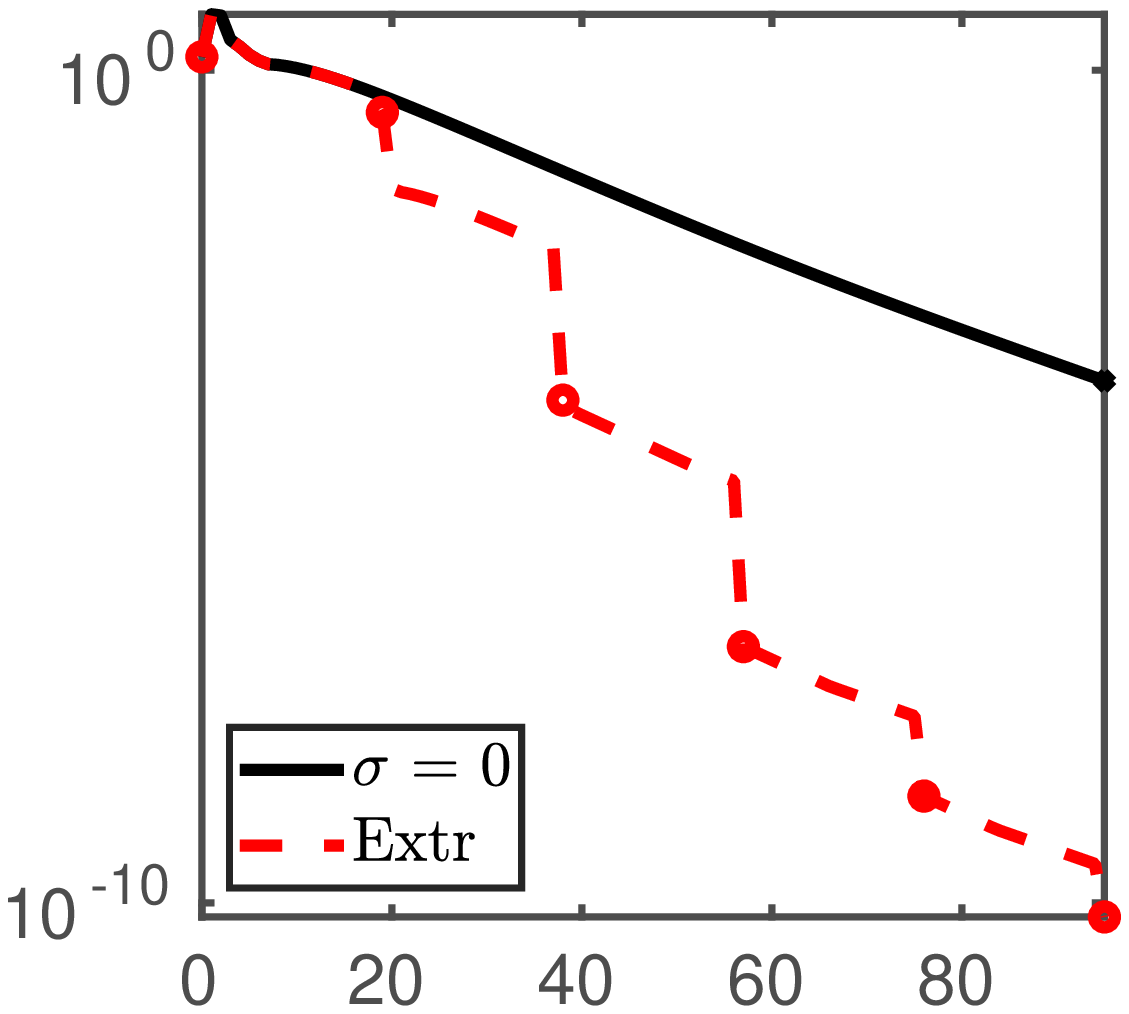}
	\includegraphics[width=.32\textwidth]{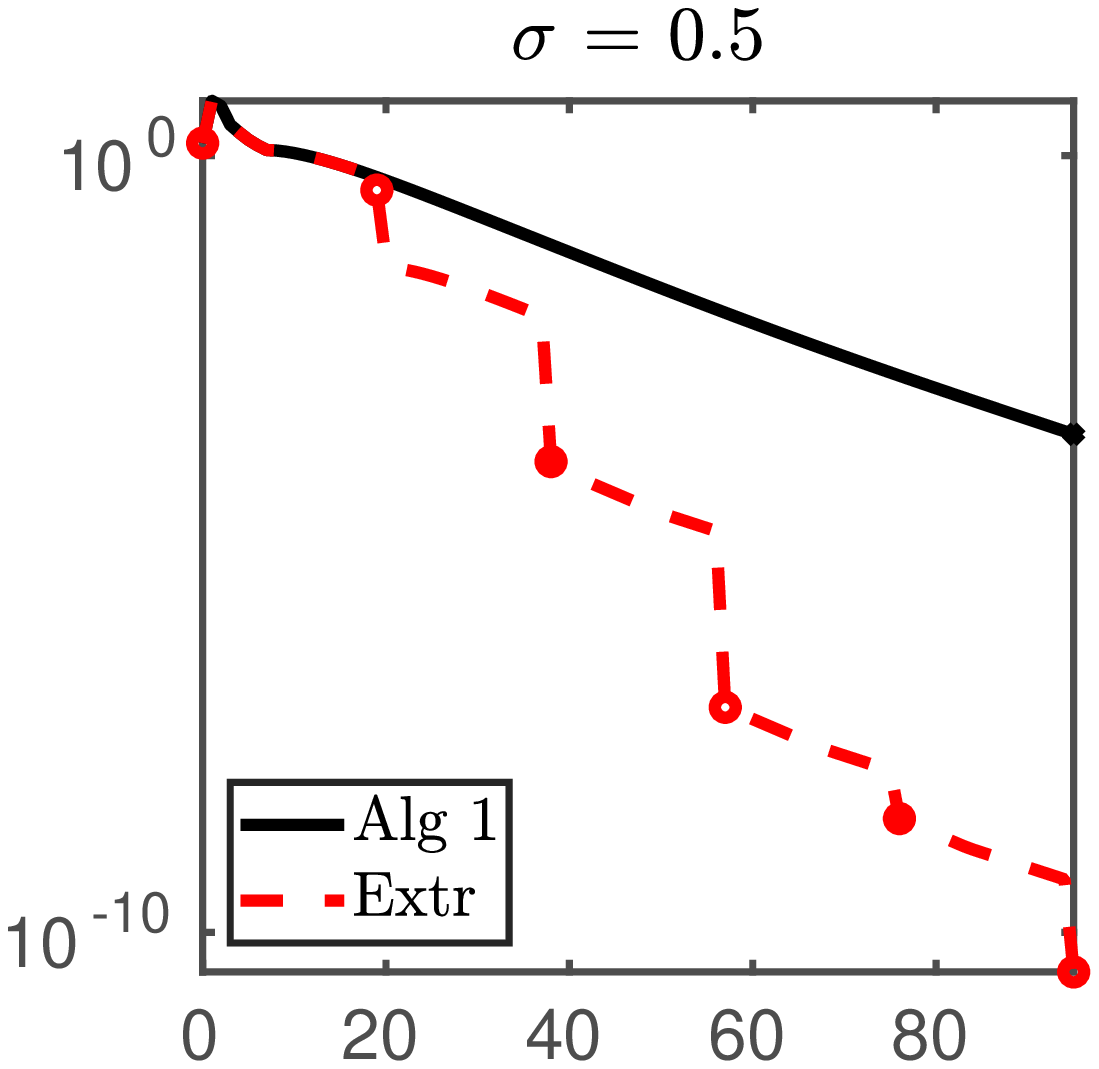}
	\includegraphics[width=.32\textwidth]{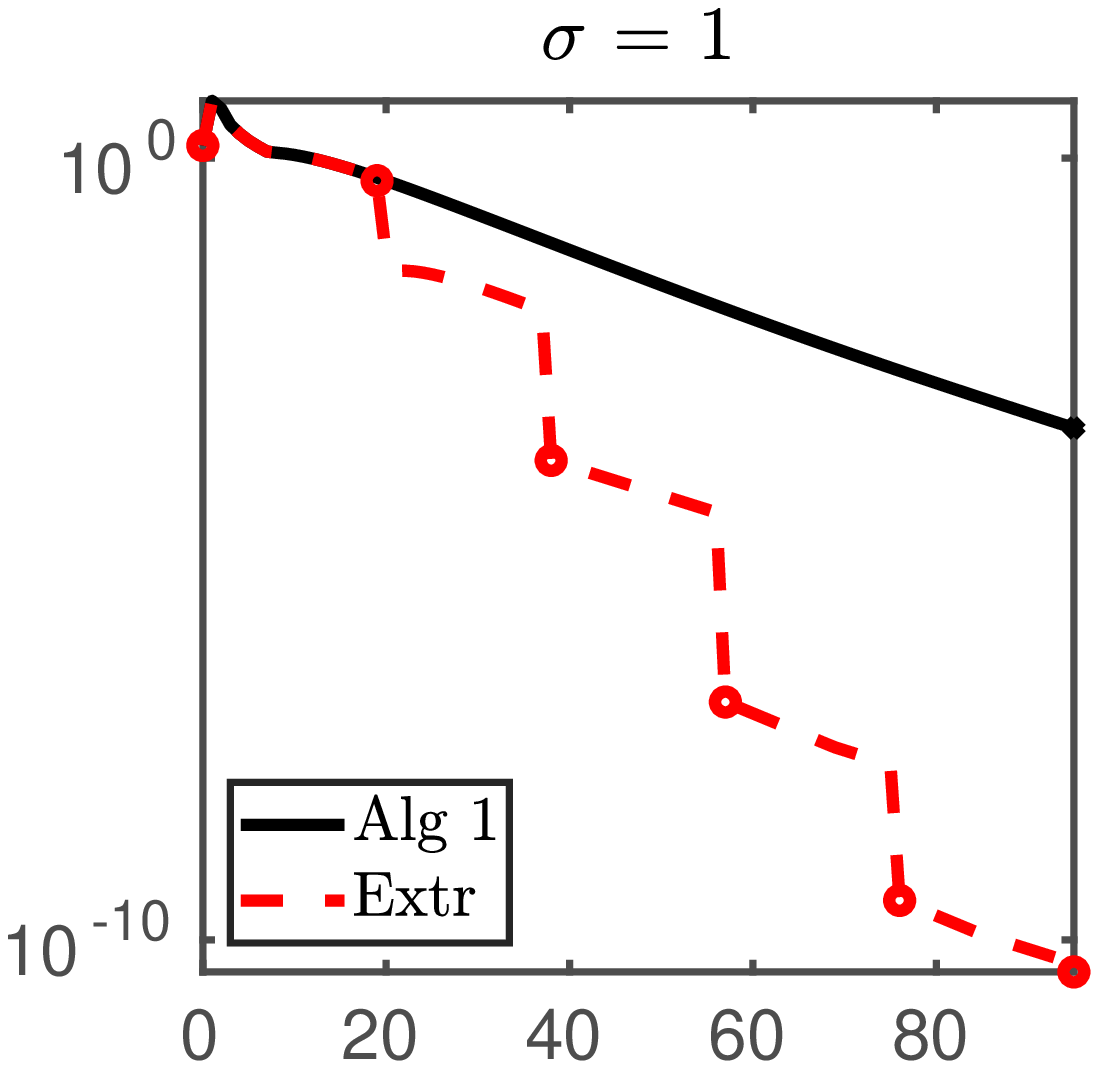}\\
	\includegraphics[width=.32\textwidth]{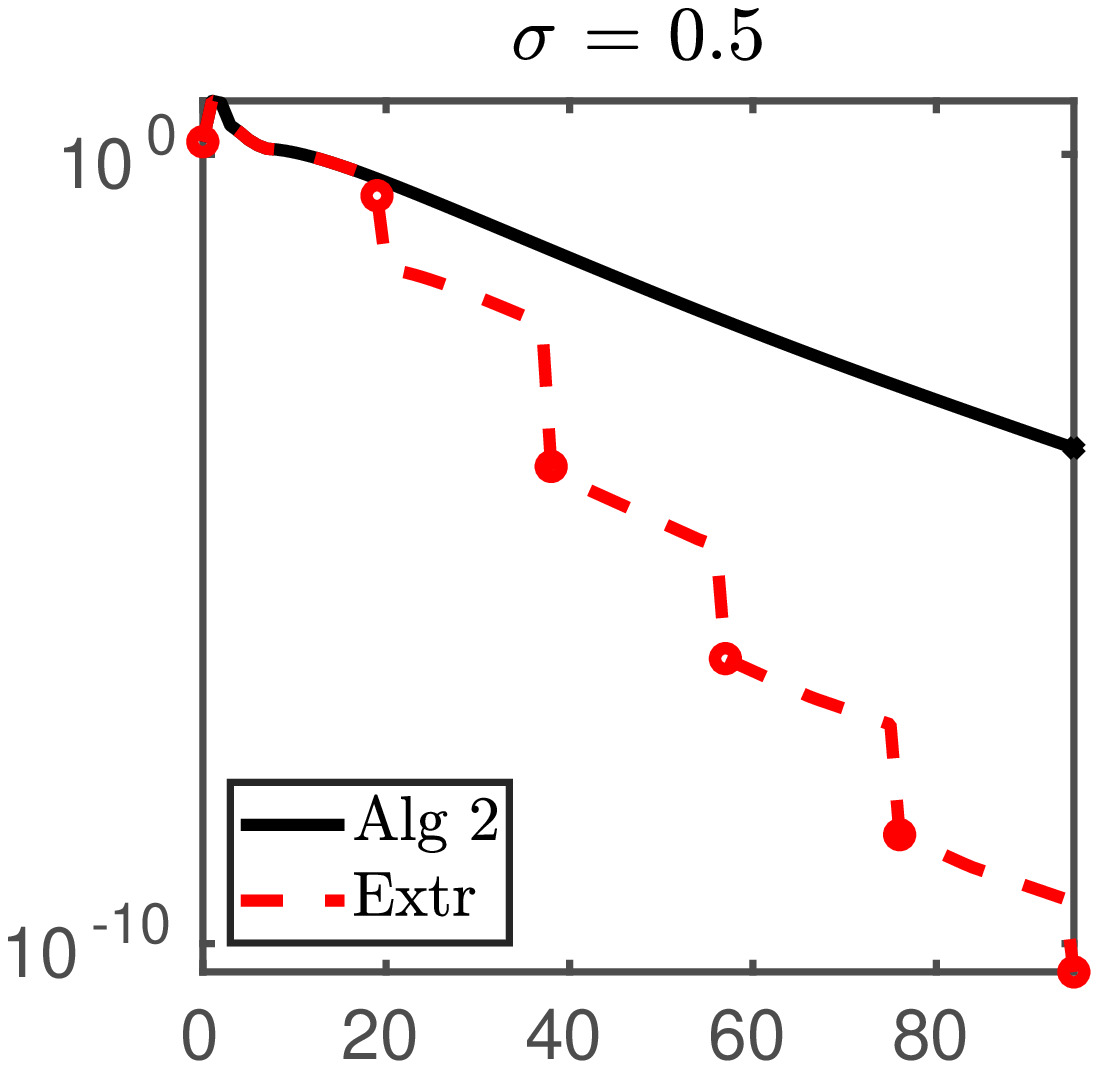}
	\includegraphics[width=.32\textwidth]{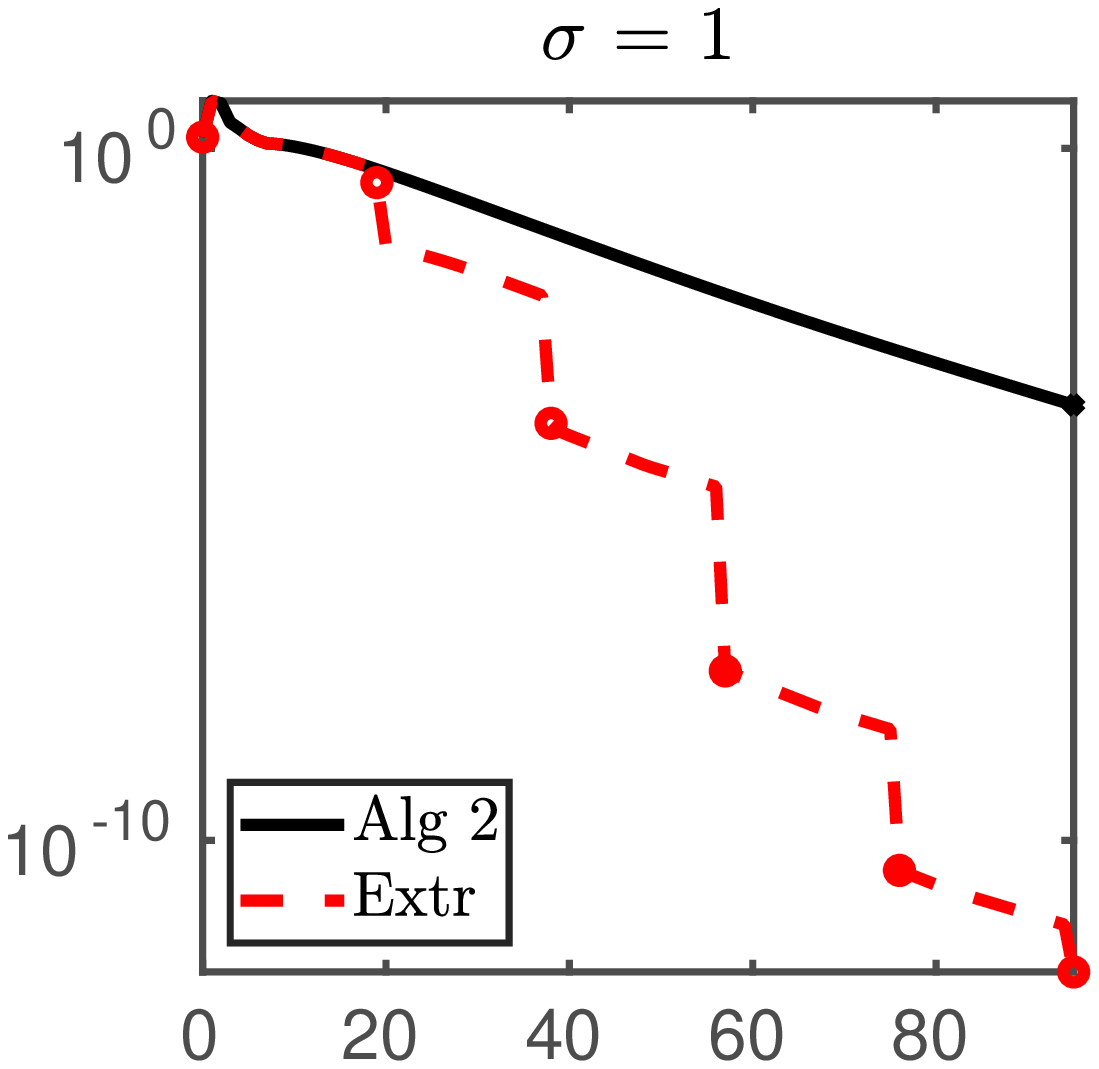}
	\caption{STEA2 on $\b T\in\RR^{n\times n\times n}, \; n=2361$, generated using \texttt{yeast}. $2h=12$, $\mathsf{cycles}=6$, $p=d+10^{-5}$.}\label{fig:etcycle2}
\end{figure}

\begin{figure}[p]
	\centering
	\includegraphics[width=.32\textwidth]{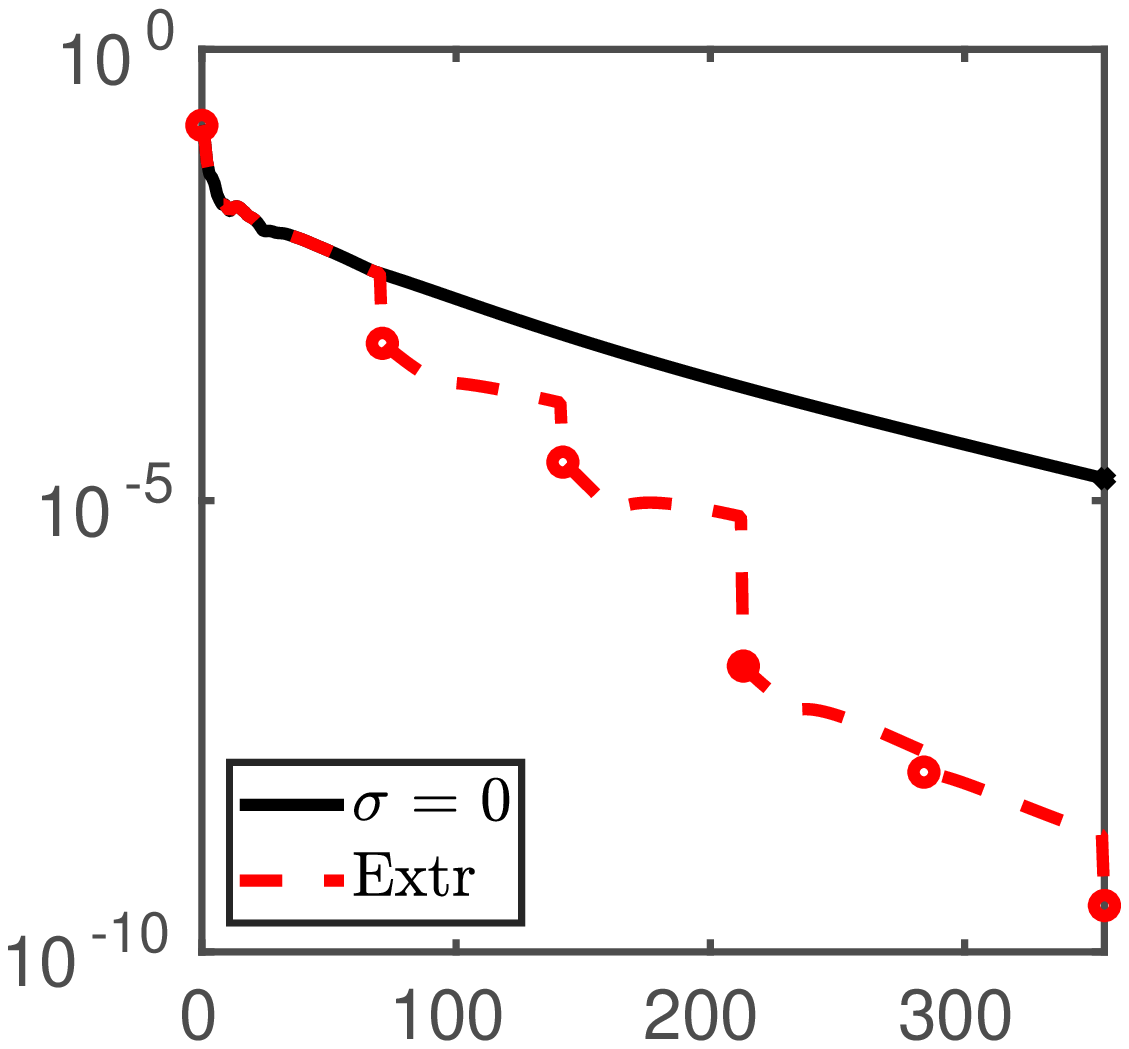}
	\includegraphics[width=.32\textwidth]{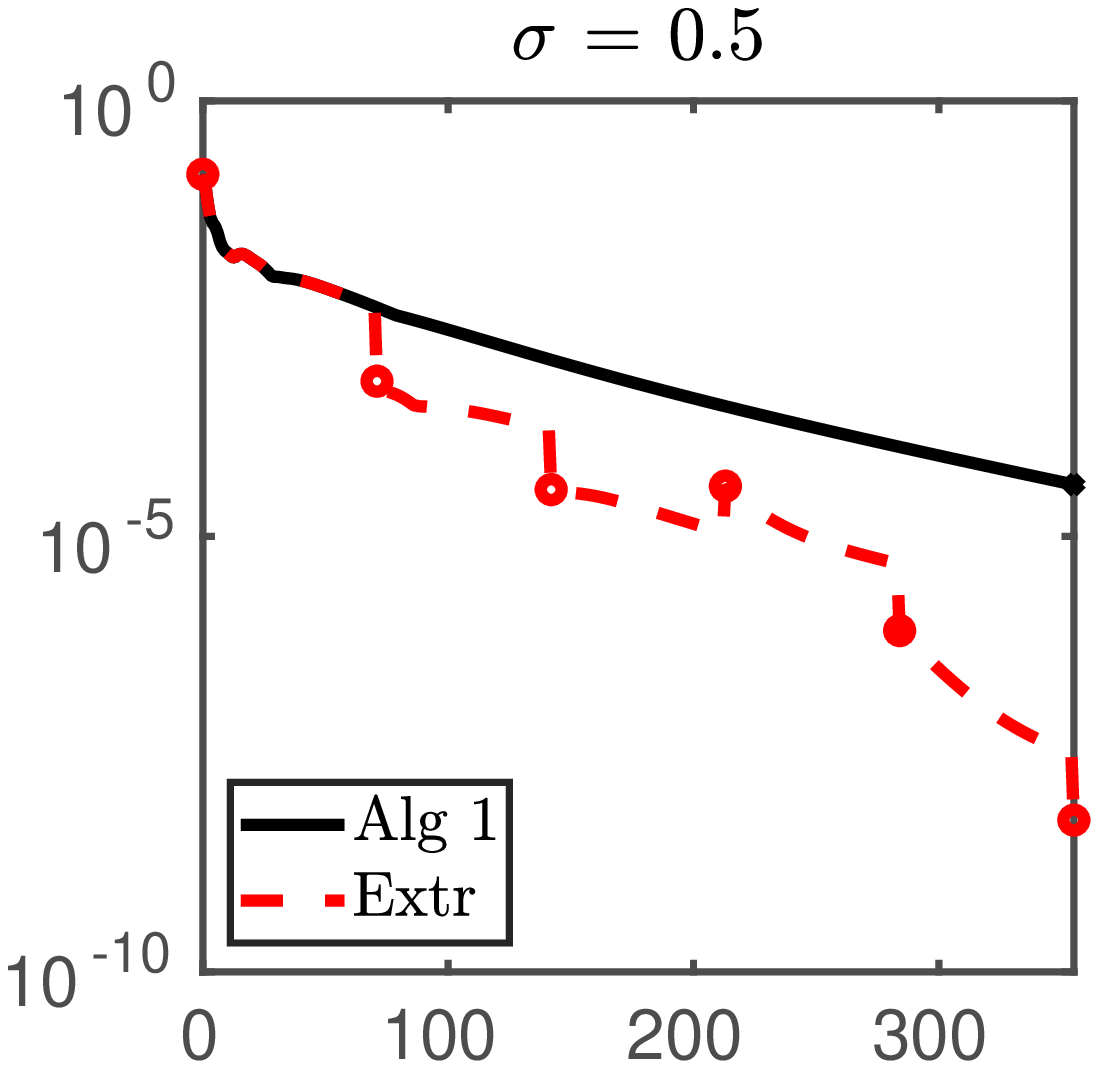}
	\includegraphics[width=.32\textwidth]{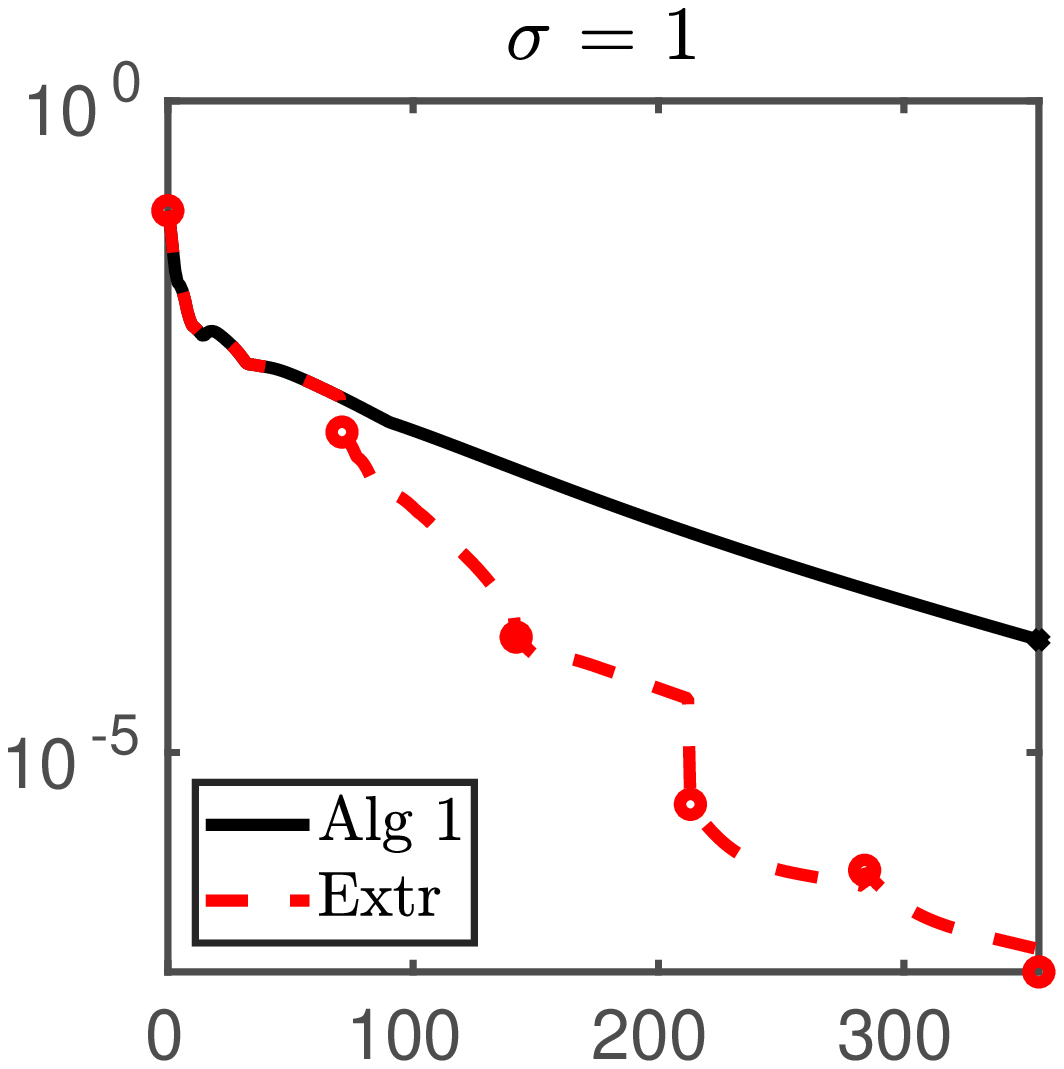}\\
	\includegraphics[width=.32\textwidth]{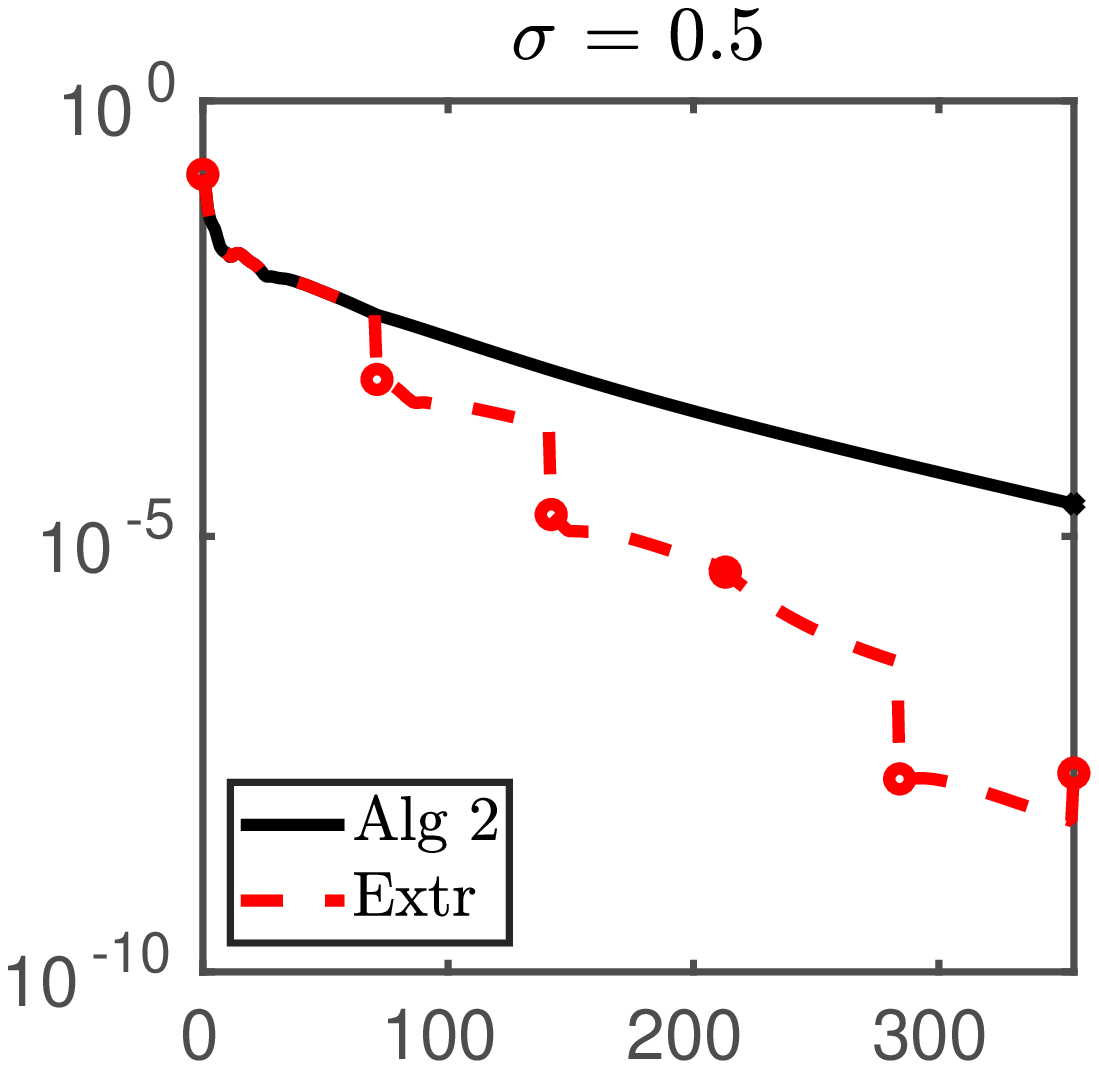}
	\includegraphics[width=.32\textwidth]{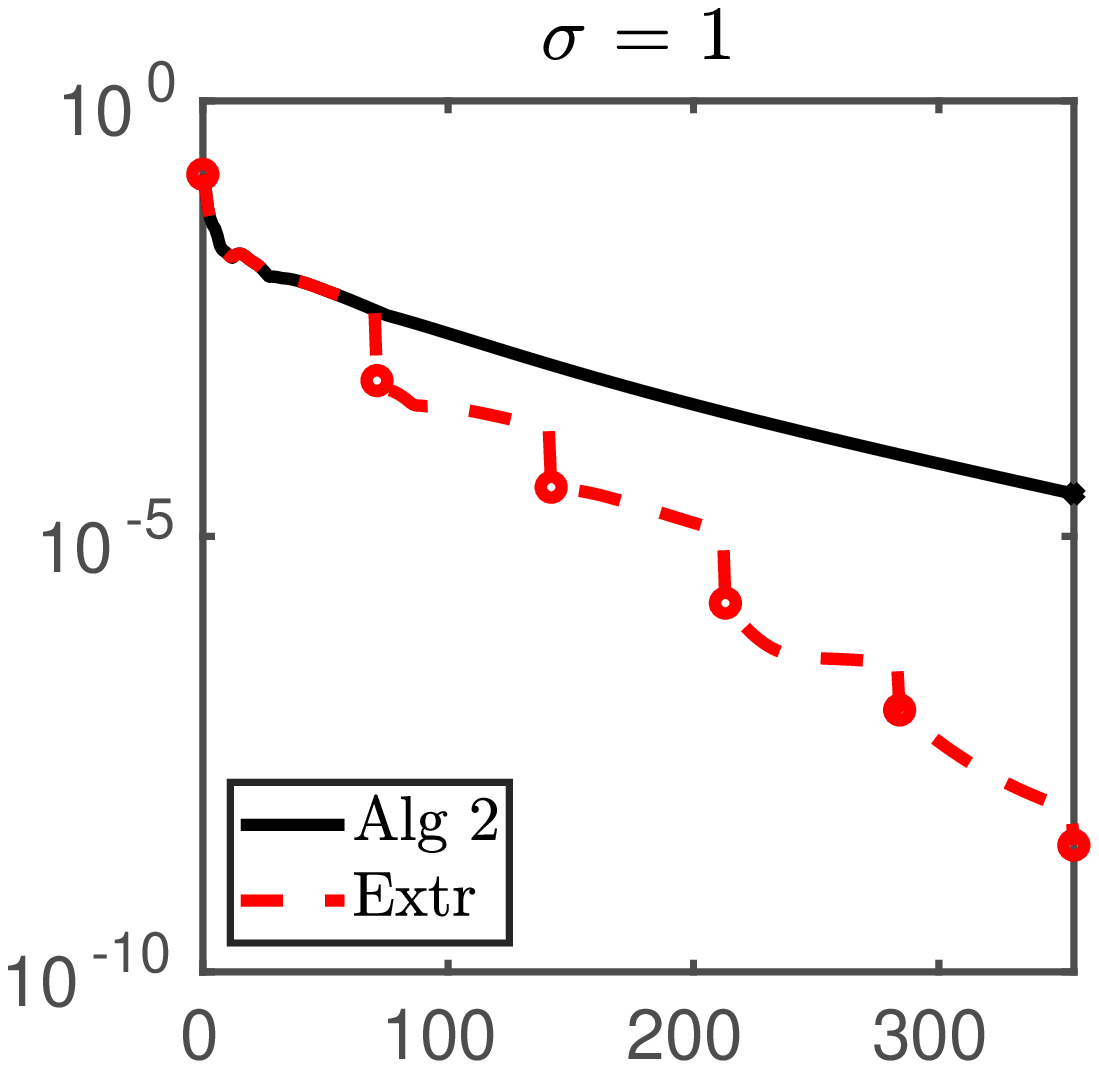}
	\caption{STEA2 on $\b T\in\RR^{n\times n\times n}, \; n=1107$, generated using \texttt{gre$1107$}. $2h=70$, $\mathsf{cycles}=5$, $p=d+10^{-5}$.}\label{fig:etcycle3}

\vspace{0.4cm}
\centering
\includegraphics[width=.32\textwidth]{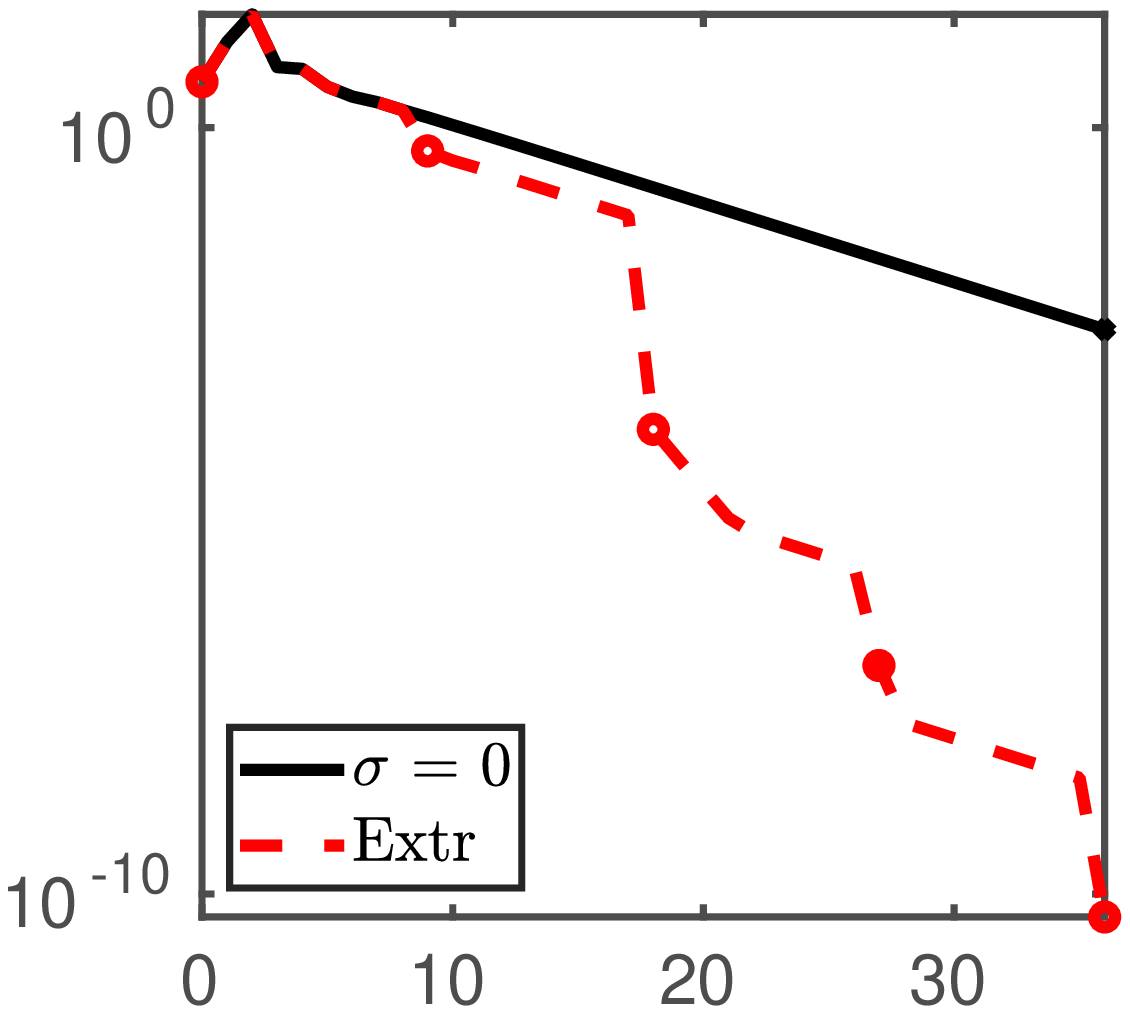}
\includegraphics[width=.32\textwidth]{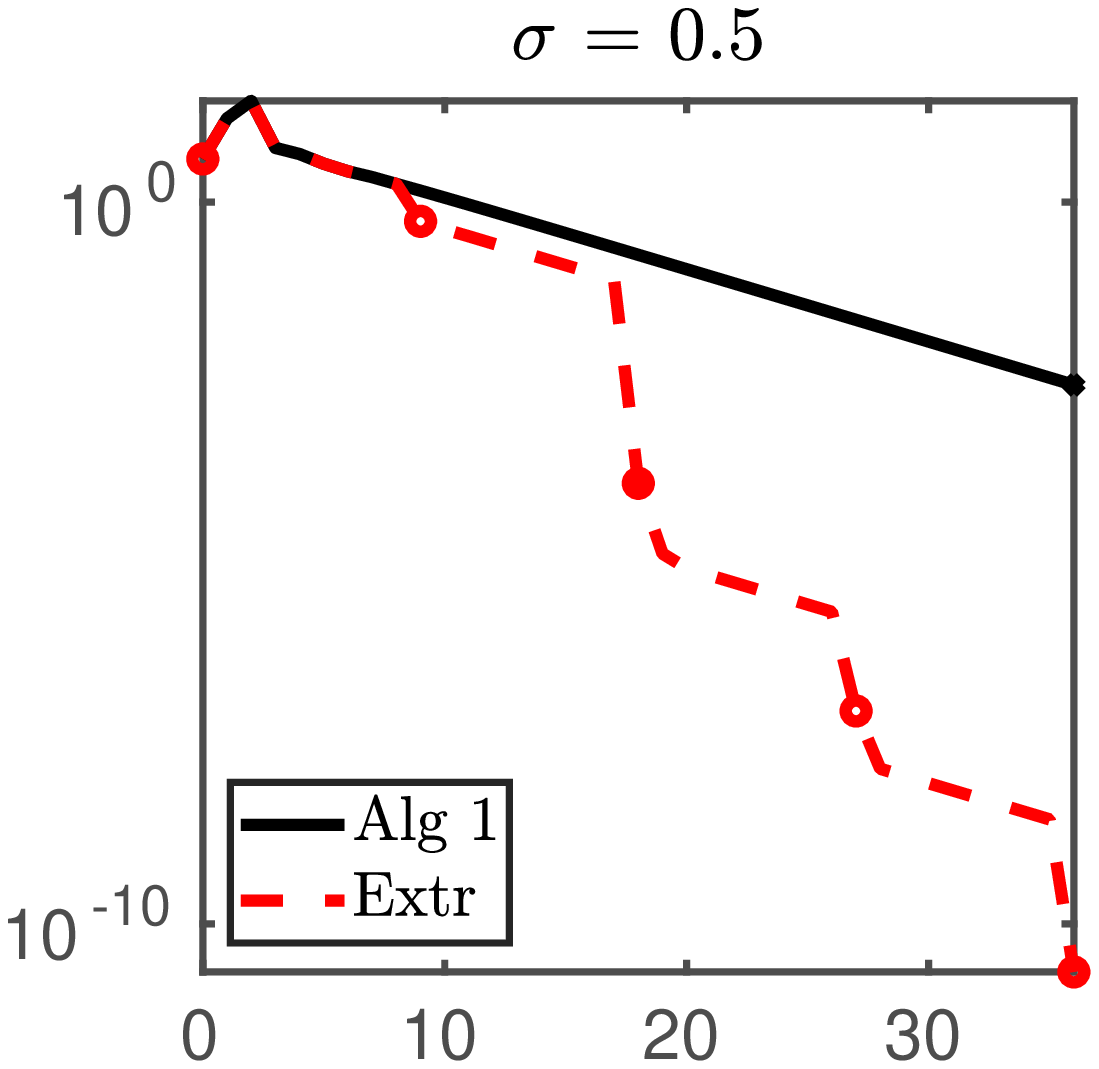}
\includegraphics[width=.32\textwidth]{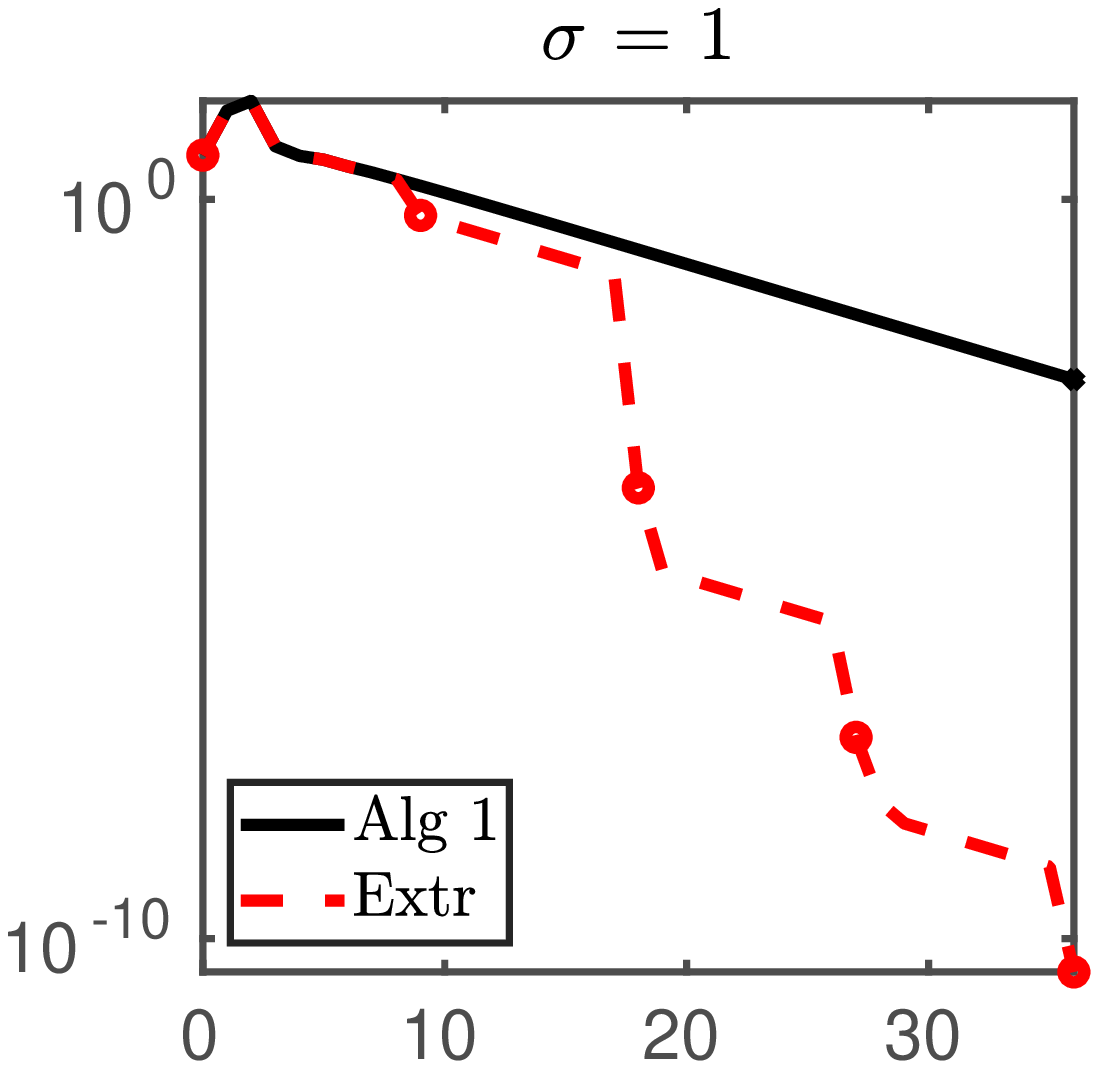}\\
\includegraphics[width=.32\textwidth]{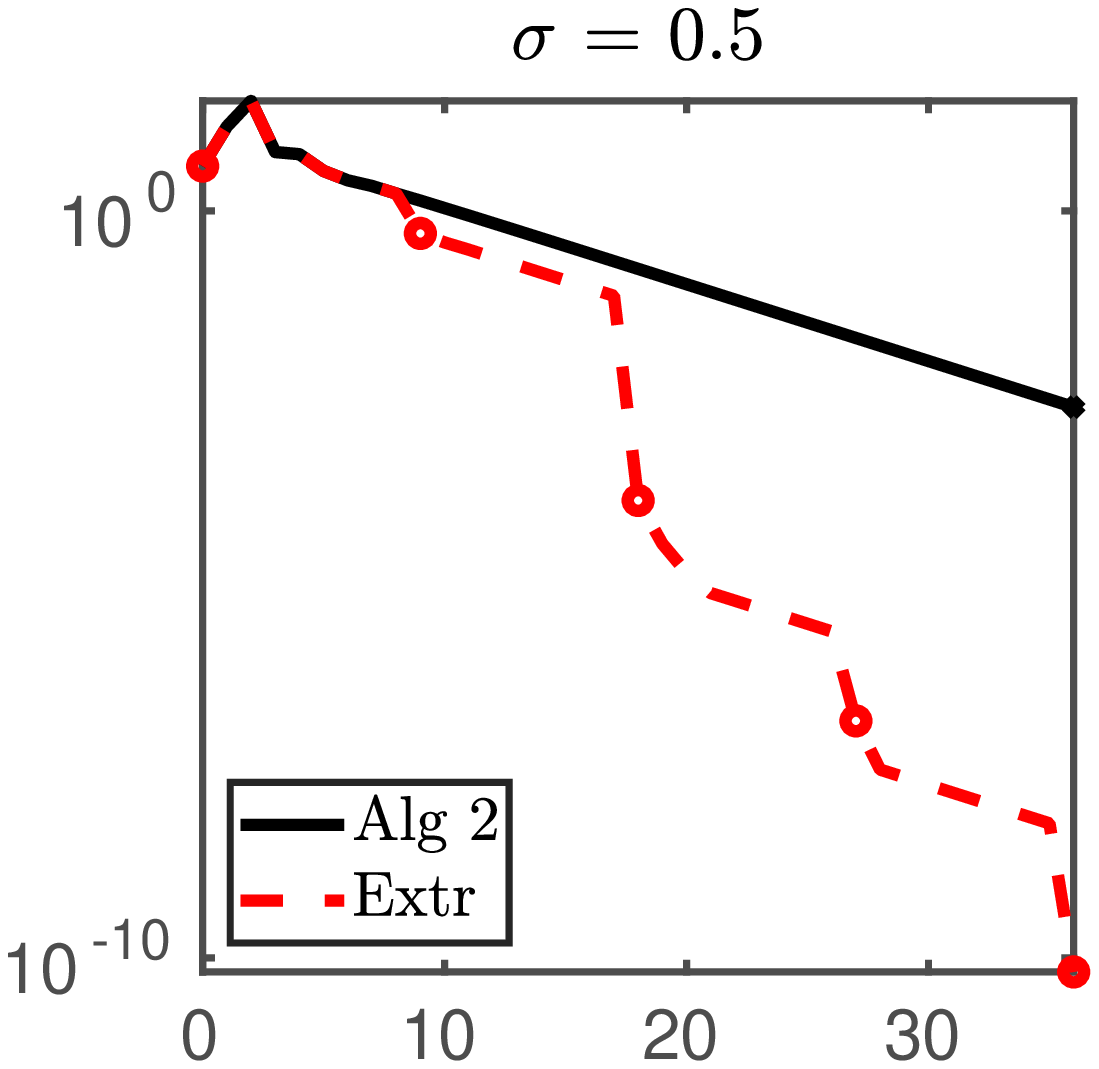}
\includegraphics[width=.32\textwidth]{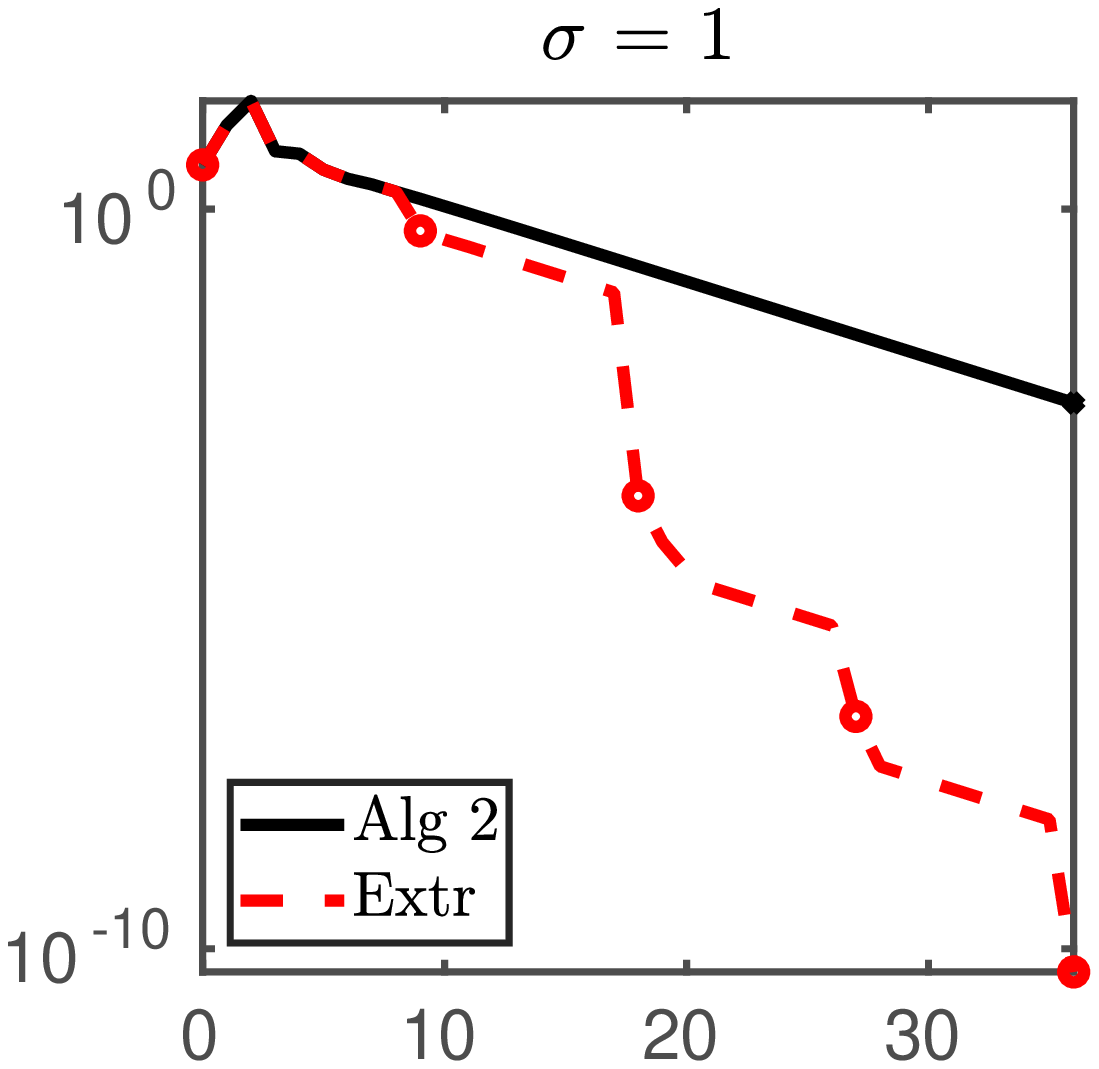}
\caption{STEA2 on $\b T\in\RR^{n\times n\times n}, \; n=9914$, generated using \texttt{stanford}. $2h=8$, $\mathsf{cycles}=4$, $p=d+10^{-5}$.}\label{fig:etcycle4}
\end{figure}

\section{Numerical Experiments: Part 2}\label{sec:experiments_extrapolated}

In this section we present several numerical experiments to demonstrate the advantages of the extrapolation framework we are proposing. We focus only on problems from Section \ref{sec:real_world_data_tensors} which exhibit a \textit{slow} convergence rate (even if we observe similar acceleration performance on all the datasets).  For the same reason we focus only on the case $p=d+10^{-5}$. As before, in all our experiments we consider the point-wise eigenvector residual
\begin{equation*}
\|T(\b x_k)-\lambda_k\Phi_p(\b x_k)\|_\infty
\end{equation*}
evaluated on the current iteration step.
Our numerical results are focused on the analysis of the rate of convergence for the accelerated sequence when compared to the original one. To this end, we  run Algorithm \ref{alg:restarted_method} for  a prescribed number of inner and outer iterations (i.e. we fix the value of $h$ and $\mathsf{cycles}$) without any other stopping criterion.
Results are shown in Figures \ref{fig:etcycle1}, \ref{fig:etcycle2}, \ref{fig:etcycle3} and \ref{fig:etcycle4},
where  we highlighted with a circle each restart of the outer loop, i.e., the residual generated by the  iterate defined at Line 8 of Algorithm \ref{alg:restarted_method}.

The linear functional $\b y$ is updated at the end of each outer cycle by choosing $\b y=\widetilde{\b e}_{h}({\bf x}_0)$ (for the first extrapolation step we choose $\b y = \b x_0$). For the implementation of the simplified topological $\varepsilon$-algorithm we used the public domain Matlab toolbox EPSfun \cite{bmrz2017software}. For all the reported numerical results  we choose a random starting point.

As Figures \ref{fig:etcycle1}, \ref{fig:etcycle2}, \ref{fig:etcycle3} and \ref{fig:etcycle4} show, the introduction of extrapolation techniques for the computation of $\ell^p$ eigenpairs greatly improves  the rate of convergence of the original fixed point Algorithms \ref{alg:shiftedPM1} and \ref{alg:shiftedPM2} at a cost of one scalar product more per fixed point iteration (see Section \ref{sec:extrapolation}).

Finally, let us point out that even though the acceleration phenomena are quite evident for the shifted versions of Algorithms \ref{alg:shiftedPM1} and \ref{alg:shiftedPM2}, they seem to happen  more systematically on the unshifted ones;  we find this aspect particularly interesting and  we believe it deserves further investigation.

{In Table \ref{table:cputimes}, we report the comparison of the execution times needed to produce a residual smaller than $10^{-9}$  for the unshifted extrapolated scheme and the non extrapolated one (the execution times of the other extrapolated schemes are all very similar).
\begin{table}[t]
	\centering  \fontsize{9}{5}\selectfont
	\begin{tabular}{|l|c|c|}
		\hline
		\multirow{2}{*}{Problem} & \multicolumn{2}{l|}{	\hspace{1.5cm}{Time(s)}}   \\ \cline{2-3}
		& \textit{Extrapolated} & \textit{Not Extrapolated} \\ \hline
	\texttt{Dolphins}	& \textbf{0.2373}       & 0.3062          \\ \hline
		\texttt{yeast} 	& \textbf{0.4199}       & 0.7193          \\ \hline
	\texttt{gre$1107$}	& \textbf{1.4225}       & 1.8462          \\ \hline
	\texttt{wb-cs-stanford}	& \textbf{0.3817}       & 0.6994          \\ \hline
	\end{tabular}
\caption{Comparison of execution times to produce residuals smaller than $10^{-9}$, $\sigma=0$.}
\label{table:cputimes}
\end{table}
}

\section{Conclusions}In this work we introduced two new shifted power method  schemes for computing $\ell^p$-eigenpairs
of tensors. We proved that the  introduced algorithms are guaranteed to converge for entrywise nonnegative tensors {with possibly reducible patterns} and for $p>d$,  being $d$ is the number of modes of the tensor.
Moreover, we show that for nonnegative tensors the Perron $\ell^p$-eigenvector depends continuously on the parameter $p$. This result, together with the global convergence guarantees
of the shifted power methods, allows us to propose the first method that can provably approximate the positive Perron $H$-eigenvector of a nonnegative tensor, by choosing $p\approx d$.  The methods can
suffer of a slow rate of convergence, as we observe in the numerical experiments proposed in
Section \ref{sec:experiments_part1}. For this reason, we also discuss  the employment of the
simplified $\varepsilon$-algorithm to extrapolate the power sequence and accelerate its convergence. 
The numerical experiments of Section \ref{sec:experiments_extrapolated} show that the use of the proposed extrapolation method 
substantially improves  the convergence rate of the proposed power methods for $\ell^p$-eigenvectors, at the price of a minor additional cost per step.

\section*{Acknowledgement}
The work of M.R.-Z. was partially supported  by the
University of Padua, Project no. DOR1903575/19. The work of S.C. was partially supported by the GNCS -- INdAM project ``Efficient Methods for large scale problems with applications to data analysis and preconditioning''. The work F.T. was funded by the European Union's Horizon 2020 research and innovation programme under the Marie Sk\l odowska-Curie individual fellowship ``MAGNET'' No 744014. All the authors are members of the INdAM Research group GNCS.

\end{document}